\documentclass[reqno]{amsart}
\usepackage{amsmath, amsfonts, amssymb, amsthm, esint, mathtools}
\usepackage[margin=1in]{geometry}
\usepackage{graphicx,xcolor}

\usepackage{cite}
\usepackage[colorlinks,linkcolor = blue,citecolor = green]{hyperref}

\newtheorem{theorem}{Theorem}[section]
\newtheorem{proposition}[theorem]{Proposition}

\newtheorem{lemma}[theorem]{Lemma}
\newtheorem{defn}[theorem]{Definition}

\newtheorem{remark}[theorem]{Remark}
\numberwithin{equation}{section}

\newcommand{\R}{\mathbb R}

\DeclareMathOperator{\dive}{div}

\DeclareMathOperator{\tr}{\mathrm\mathrm{tr}}

\newcommand{\eps}{\varepsilon}

\usepackage{comment, color}

\usepackage{dsfont}
\DeclareMathOperator{\dv}{\mathrm{div}}

 \def \E{\mathbb{E}}

\author{Nestor Guillen}
\address{Courant Institute, New York University, New York, NY 10012}
\email{nestor.g@nyu.edu}
\author{Vladimir A. Kobzar}
\address{Department of Mathematics, The Ohio State University, Columbus, OH 43210}
\email{kobzar.1@osu.edu}

\title{Online  Koml\'os converges to mean curvature flow}
\date{\today}
\dedicatory{Dedicated to Robert V. Kohn, 1953-2026}

\begin{document}

\begin{abstract}

We determine the asymptotics of a game inspired by classic vector balancing problems in combinatorial discrepancy theory.  In this game, which we call  the online Koml\'os game, two players, Paul and Carol, update the state vector $y$ in  $\mathbb{R}^m$, initially placed at $0$. At each round, Paul chooses freely a set of $n$ vectors in the Euclidean unit ball, and   Carol chooses, for each such vector, whether to leave  it unchanged or reverse its sign. The resulting vectors are all added to  $y$, and the game proceeds to a new round.  After $T$ rounds, the game ends, and the $\ell_\infty$ norm of the state vector $y$ is determined. Paul's objective throughout the game is  to maximize this norm, and  Carol's objective is to minimize it. As $T$ gets large, we establish that the leading order term of the value of this game is $\sqrt{T/2\tau}$, where $\tau$ is the extinction time of the unit cube in $\mathbb{R}^m$ under a curvature-based flow characterized by the values of $m$ and $n$. When $n\geq m-1$, this flow is the mean curvature flow, and we show that $1/\sqrt{2\tau} =\Theta(\sqrt{\log m})$.   Our results build upon the work of Kohn and Serfaty on deterministic games and mean curvature flow, combined with Banaszczyk's $\ell^2$ analogue of the Beck-Fiala theorem. As the large $T$ limit of the online Koml\'os game amounts to a localization of the  classic Koml\'os problem, we hope this work can shed light on this and other vector balancing problems. Our results generalize to  the version of the online  Koml\'os game with the final value given by an arbitrary norm in $\R^m$.

\end{abstract}

\maketitle

\section{Introduction}\label{sec:intro}

Discrepancy theory covers a wide array of problems and methods in combinatorics, geometry and analysis  that involve approximation of a mathematical object by discrete elements  \cite{Matousek99,Travaglini14}. This field has been a long-standing focus of computer science and  scientific computing, ranging from  differential privacy and analog-to-digital conversion  to causal inference and  learning theory \cite{Chazelle2000,   CST14, Gunturk12, NTZ13, HSS24,APZ19}.  One of the main problems in \emph{combinatorial} discrepancy is vector balancing: given vectors $a_1,\ldots,a_k$ in a normed space,  our objective is to choose signs $\varepsilon_1,\ldots,\varepsilon_k \in \{-1,1\}$ in order to approximate the zero vector, i.e., minimize
\begin{align*}
  \| \varepsilon_1a_1+\ldots+\varepsilon_k a_k\|.
\end{align*}
We will focus on a version of this problem  posed as a zero-sum game between two players, Paul and Carol.\footnote{These names are mnemonics for \emph{pusher} of vectors and  \emph{chooser} of  signs, respectively, in the games introduced by Spencer.} The game is played for $T$ rounds where Paul reveals a set of $k=n T$ vectors  in  $\mathbb{R}^m$ for $m \geq 2$ to Carol not all at once, but sequentially in $T$ batches of size $n$.

Since the case when $T=1$  corresponds to the classic open problem  known as the Koml\'os conjecture, we will refer to our game as the \emph{online Koml\'os game}, formally defined as follows. At round $t=0$, we initialize the vector $y_t \in \mathbb{R}^m$ with $y_0 = 0$. In each subsequent round from $t= 1$ until the final round $T$, Paul chooses $n$ vectors $a_{1,t},\ldots,a_{n,t} \in \mathbb{R}^m$, each with at most unit Euclidean length, and reveals them to Carol. Then Carol irrevocably chooses $n$ signs $\bar \varepsilon_t = (\varepsilon_{1,t},\ldots,\varepsilon_{n,t}) \in \{-1,1\}^n$, and we  update
\begin{align*}
  y_{t} = y_{t-1}+(\varepsilon_{1,t}a_{1,t}+\ldots+ \varepsilon_{n,t}a_{n,t}).
\end{align*}
We will write this update  more compactly as $y_{t} = y_{t-1} + A_t\bar \varepsilon_t$ where $A_t$ denotes the $m \times n$ matrix whose columns are given by the vectors $a_{1,t},\ldots,a_{n,t} $. Paul's objective is to make the norm $\|y_T\|_\infty$ as large as possible, while Carol's objective is to make it as small as possible.  

Paul is referred to as an \emph{adaptive} adversary because he can choose each $A_t$ based on Carol's choices of $\bar \eps_\tau$'s at earlier times $\tau <t$. Accordingly, the  value $K_{T}(m,n)$ of the online Koml\'os  game is given by 
\begin{align}\label{e:online Komlos}
  K_{T}(m,n) := \max_{A_1}\min_{\bar \varepsilon_1} \ldots \max_{A_T}\min_{\bar \varepsilon_T}\|A_1\bar \varepsilon_1+\ldots+A_T\bar \varepsilon_T\|_\infty
\end{align}
where the feasible set of each $A_t$ is given by all $m \times n$ matrices with columns of at most unit Euclidean length  and each $\bar \eps_t \in \{\pm 1 \}^n$.  

Kohn and Serfaty \cite{KohnSerfaty2006} established a connection between a version of this game and flow by mean curvature (see also \cite{kohn07}). Concretely, they found that their Paul and Carol game that is closest to ours (which they refer to as the time-dependent ``inverse game"), is characterized in a continuum limit by the following nonlinear PDE
\begin{align*}
  \partial_tu + \max\big \{\Delta u-\big(D^2u \tfrac{\nabla u}{|\nabla u|},\tfrac{\nabla u}{|\nabla u|}\big),0\big \} = 0.
\end{align*}
They also consider the continuum limit of several other games and arrive at various other PDE, such as 
\begin{align}\label{e:motion by mean curvature}
  \partial_tu + \Delta u-\big(D^2u \tfrac{\nabla u}{|\nabla u|},\tfrac{\nabla u}{|\nabla u|}\big) = 0.
\end{align}
They obtain this last PDE using a game where Paul is restricted to choose batches forming orthonormal frames of $m-1$ vectors. The equation \eqref{e:motion by mean curvature} is known as the level set formulation for mean curvature flow (MCF), since the level sets of the solution $u(\cdot,t)$ move with velocity equal to their mean curvature.

In this work we obtain a continuum limit for the online Koml\'os game, whose value is given by \eqref{e:online Komlos}. Namely, we show an analogue of the result of Kohn and Serfaty holds in our setting, when the only condition on the vectors $a_{i,t}$ is that they all lie in the Euclidean unit ball. Accordingly, our  first result shows that the leading order term of $K_{T}(m,n)$ as $T\to \infty$ is given in terms of  the extinction time of the cube $[-1,1]^m$ under mean curvature flow.  (In the interest of exposition we state here only special cases of the main theorems and defer the complete statements along with relevant definitions to Section \ref{s:main results}.) 

\begin{theorem}\label{thm:main}
If $m \geq 2$ and $n\geq m-1$, 
\begin{align*}
\lim_{T\to\infty} \frac{K_{T}(m,n)}{\sqrt{T}}  =  \frac{1}{\sqrt{2\tau}}, 
\end{align*}
where $\tau$ is the extinction time of $[-1,1]^m$ under mean curvature flow. 

\end{theorem}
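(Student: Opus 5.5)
Our plan is to rescale the game into the Kohn--Serfaty framework and identify the limit through a viscosity-solution argument. Set $\eps = 1/\sqrt{T}$ and define the scaled value function $u^\eps(x,t)$ for $x\in\R^m$ and $t\in[0,1]$ with $t$ a multiple of $\eps^2$: it is the value of the remaining game started at position $x$ with $(1-t)/\eps^2$ rounds left, where each round moves $x \mapsto x+\eps A\bar\eps$ and the payoff is $\|x_{\mathrm{final}}\|_\infty$. Homogeneity gives $K_T(m,n)/\sqrt{T} = u^\eps(0,0)$. The dynamic programming principle reads
\begin{align*}
u^\eps(x,t-\eps^2) = \max_{A}\min_{\bar\eps\in\{\pm1\}^n} u^\eps(x+\eps A\bar\eps,t), \qquad u^\eps(x,1)=\|x\|_\infty .
\end{align*}
We will show that $u^\eps$ converges locally uniformly to the viscosity solution $u$ of
\begin{align*}
\partial_t u + \tfrac12 F(D^2u,\nabla u)=0,
\end{align*}
where $F$ is the degenerate-elliptic operator obtained by formally expanding the DPP to second order. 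In that expansion the first-order term $\eps\langle\nabla u, A\bar\eps\rangle$ dominates. Carol wants to make it nonpositive, so Paul must choose $A$ whose columns are essentially orthogonal to $\nabla u$; otherwise he loses at order $\eps$. The second-order term is $\tfrac{\eps^2}{2}\langle D^2u\,A\bar\eps,A\bar\eps\rangle$, which Paul maximizes and Carol minimizes over signs.

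The first key step is to identify $F$ when $n\ge m-1$. We will show that the max-min of the quadratic form $\bar\eps^{\top}A^{\top} M A\bar\eps$ over $A$ with columns in $\nabla u^\perp\cap B_1$ equals $\tr(P M P)$, where $P$ is the projection onto $\nabla u^\perp$.

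For the lower bound, Paul takes $m-1$ orthonormal columns spanning $\nabla u^\perp$ and pads with zeros. Cross terms then matter, so instead we would follow Kohn--Serfaty and let Paul choose orthonormal vectors. The sum over $\bar\eps$ of the diagonal contribution gives $\tr(PMP)$ plus cross terms $\sum_{i\ne j}\eps_i\eps_j\langle Ma_i,a_j\rangle$. Paul can eliminate these by taking eigenvectors of $PMP$, which makes the cross terms vanish.

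For the upper bound, Carol must guarantee $\bar\eps^{\top}A^{\top}MA\bar\eps\le \tr(PMP)^+$ up to errors. Here we invoke Banaszczyk's $\ell^2$ balancing, in the form of a signing for which the projected sum $\sum\eps_i a_i$ is controlled simultaneously in a direction and in a quadratic form. Since Paul's unit vectors need not be orthonormal, Carol can pick signs so that the quadratic form is at most its average $\sum_i\langle Ma_i,a_i\rangle\le \tr(PMP)^+$ on the relevant span. She can also keep the first-order term nonpositive, because a linear constraint can be balanced together with a quadratic one.

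We then verify that $F(D^2u,\nabla u)=\Delta u-\langle D^2u\,\hat\nu,\hat\nu\rangle$ on the relevant sets, so the PDE is level-set MCF with the time factor $\tfrac12$.

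The convergence step would use the Barles--Souganidis method:
\begin{enumerate}
\item Establish monotonicity and consistency of the DPP scheme, where consistency is exactly the local computation above.
\item Establish stability, using the $1$-Lipschitz dependence in $x$ inherited from the payoff and a uniform bound $u^\eps\le C$.
\item Apply comparison for the level-set MCF equation (Evans--Spruck, Chen--Giga--Goto).
\end{enumerate}

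Finally, the level-set solution with data $\|x\|_\infty$ at $t=1$, running backward, has sublevel sets that are scaled cubes evolving by MCF at speed $\tfrac12$. Scaling then gives $u(0,0)=\lambda$, where the cube $\lambda[-1,1]^m$ becomes extinct in time $1$ under the halved flow. This means $\lambda^2\tau\cdot 2=1$, so $\lambda = 1/\sqrt{2\tau}$.

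The main obstacle is the upper bound for Carol: controlling the $\ell_\infty$-norm payoff at non-smooth points and showing that arbitrary non-orthogonal batches give Paul no advantage. We would handle this by proving the consistency inequality against smooth test functions only, where Banaszczyk's theorem supplies the signing.
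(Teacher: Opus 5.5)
Your architecture matches the paper's: parabolic rescaling, dynamic programming, consistency, half-relaxed limits plus comparison, then the extinction time by scaling. But the local computation at its core is wrong, and the error carries into the identification of the limit equation.

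\textbf{The local max--min.} For $p=\nabla\phi\neq 0$, $P=\Pi(p)$ and $n\geq m-1$, the max--min of $(MA\bar\eps,A\bar\eps)$ over $A$ with columns in $p^\perp\cap B_1$ is $\tr\bigl((PMP)_+\bigr)$, the sum of the \emph{positive} eigenvalues. It is not $\tr(PMP)$. Paul should use only eigenvectors of $PMP$ with positive eigenvalue and set the other columns to zero, so your orthonormal-eigenbasis strategy is suboptimal whenever $PMP$ has a negative eigenvalue.

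\textbf{Carol's bound.} Choosing signs so that the quadratic form is at most its average only gives $\sum_i(Ma_i,a_i)$. For unit but non-orthogonal columns this is not controlled by $\tr\bigl((PMP)_+\bigr)$. For example, with $n\geq 2$ and all columns equal to a top eigenvector, the average is $n\lambda_{\max}$, while $\tr\bigl((PMP)_+\bigr)$ can be just $\lambda_{\max}$. This is exactly where the ellipsoid form of Banaszczyk's theorem is needed (Remark~\ref{r:Banaszczyk's theorem restated}, Lemma~\ref{l:H n is sum of positive eigenvalues}). The vectors $z_i=(PMP)_+^{1/2}a_i$ lie in an ellipsoid whose squared semi-axes sum to $\tr\bigl((PMP)_+\bigr)$. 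Hence some signing gives $(MA\bar\eps,A\bar\eps)\leq|\sum_i\varepsilon_iz_i|^2\leq\tr\bigl((PMP)_+\bigr)$, uniformly in $n$. The flip $\bar\eps\mapsto-\bar\eps$ then makes the first-order term nonpositive for free. For columns not orthogonal to $p$, you need the quantitative fact that a near-optimal $A$ has $\|A^\top p\|_1$ of the order of the spatial step, so $A$ can be replaced by $PA$ (Lemma~\ref{l:liminf and limsup for H delta n}). At $\nabla\phi=0$, consistency holds only for the upper and lower envelopes of the operator.

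\textbf{The limit equation.} Consequently the continuum limit is the level-set equation for flow by the positive principal curvatures,
\[
\partial_tu+\tfrac12\tr\bigl((\Pi(\nabla u)D^2u\,\Pi(\nabla u))_+\bigr)=0,
\]
not level-set MCF. Since $\tr(X_+)\geq\tr X$, the scheme shows that $u_*$ is an MCF supersolution. It only shows that $u^*$ is a subsolution of the positive-curvature equation, which is weaker than being an MCF subsolution. So your comparison step cannot close as stated.

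\textbf{The missing ingredient: convexity preservation.}
\begin{enumerate}
\item The level-set MCF solution with terminal data $\|x\|_\infty$ has convex sublevel sets, by Evans--Spruck's viscosity version of Huisken's theorem.
\item For such functions the two equations coincide.
\item Uniqueness for the positive-curvature equation then identifies the game limit with MCF. This needs the general geometric comparison of Chen--Giga--Goto and Giga--Goto--Ishii--Sato, not the MCF-specific Evans--Spruck result.
\end{enumerate}
Your phrase ``on the relevant sets'' must be replaced by this argument (cf.\ Theorem~\ref{t:convex level sets}).

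\textbf{Working with the cube directly.} This is legitimate, and it bypasses the paper's detour through $C^2$ bodies (Lemma~\ref{lemma:bound for the value function}) and smooth approximation (Lemma~\ref{l:approximation lemma}). You must, however, state the terminal estimate explicitly:
\[
\|x\|_\infty\leq u^\eps(x,t)\leq\|x\|_\infty+\sqrt{\min(m,n)(1-t)}.
\]
It follows from the $1$-Lipschitz property in $\|\cdot\|_\infty$ together with Carol's Euclidean strategy $|y+A\bar\eps|^2\leq|y|^2+\min(m,n)$. A bare local bound $u^\eps\leq C$ does not give attainment of the terminal data.
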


The complete result is recorded later in Theorem \ref{t:main theorem}; it covers asymptotics for online vector balancing in any norm and  batches of any size $n$, including $n=1$. This result opens the door to studying the asymptotics of the value function $K_{T}(m,n)$ using tools from PDEs and geometry. Our second result characterizes how the long-time asymptotics in Theorem \ref{thm:main} grow with $m$.

\begin{theorem}\label{thm:asymptotics as m goes to infinity} The following holds as $m,n\to \infty$ with $n\geq m-1$
\begin{align*}
 (1-o(1)) \sqrt{\log m}\leq \lim_{T\to\infty} \frac{K_{T}(m,n)}{ \sqrt{T}} \leq (\sqrt{2}+o(1))\sqrt{\log m}.
\end{align*}
Equivalently, for $\tau$ the extinction time of $[-1,1]^m$ under mean curvature flow, we have 
\begin{align*}
 \frac{1-o(1)}{2\log m}\leq 2\tau \leq \frac{1+o(1)}{\log m}.
\end{align*}

\end{theorem}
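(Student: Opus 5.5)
By Theorem \ref{thm:main}, the quantity $\lim_T K_T(m,n)/\sqrt{T}$ equals $1/\sqrt{2\tau}$. So the statement reduces to two-sided bounds on the extinction time $\tau$ of the cube $[-1,1]^m$ under mean curvature flow, and I will obtain both from the comparison principle for the level set equation \eqref{e:motion by mean curvature}. The target is
\begin{align*}
\frac{1-o(1)}{2\log m}\leq 2\tau\leq\frac{1+o(1)}{\log m}.
\end{align*}
The main tool is the probabilistic (stochastic control) representation of MCF in the spirit of Kohn--Serfaty and Soner--Touzi. A point $x$ survives to time $t$ exactly when there is a control keeping $x+\sqrt{2}\int_0^s P_r\,dW_r$ inside the cube up to time $t$ almost surely. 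Here $P_r$ is a rank $m-1$ projection, namely $I-\nu\nu^T$ with $\nu$ the normal direction. Equivalently, in game form, Carol must keep the position inside the cube while Paul pushes along the tangent hyperplane. With this picture, each bound comes from a barrier.

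\textbf{Upper bound on $2\tau$ (Carol's side).} I build a smooth convex subsolution inside the cube. Take the level sets of the log-sum-exp function
\begin{align*}
\phi(x)=\frac1\beta\log\sum_{i=1}^m\big(e^{\beta x_i}+e^{-\beta x_i}\big),
\end{align*}
which is within $\log(2m)/\beta$ of $\|x\|_\infty$. The mean curvature of its level sets equals $\Delta\phi-(D^2\phi\,\nu,\nu)$ divided by $|\nabla\phi|$. Since $|\nabla\phi|\ge 1/\sqrt m$ only in the worst case, I would instead bound it using $\Delta\phi\le\beta$ together with $|\nabla \phi|_1=1$. A cleaner route is to use the game directly. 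Carol's potential $\Phi(y)=\sum_i\cosh(\beta y_i)$ gains, in each step along a tangent direction of $\Phi$, a second-order term of at most $\tfrac{\beta^2}{2}\Phi$ per unit squared length. Over time $t$ (scaled so that steps are $\sqrt{2}\,dW$) this gives $\Phi\le 2m\,e^{\beta^2 t}$, hence $\|y\|_\infty\le \log(2m)/\beta+\beta t$. Optimizing over $\beta$ gives $\|y\|_\infty\le 2\sqrt{t\log(2m)}$. Setting this equal to $1$ yields $t=1/(4\log 2m)$, which is too weak by a factor of $2$. I will therefore refine the estimate by exploiting that the tangent projection kills the gradient direction. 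This recovers a factor $1/2$ in the Itô correction (this is the standard $\sqrt{2\log m}$ Gaussian maximum), so that $\|y\|\le\sqrt{2t\cdot 2\log m}\,(1+o(1))$. That gives $2\tau\le(1+o(1))/\log m$.

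\textbf{Lower bound on $2\tau$ (Paul's side).} Paul uses random directions in the tangent plane. In the Brownian picture, near time zero $P_r$ removes only one dimension. A Brownian motion $\sqrt2 W$ in $m-1$ effective coordinates therefore has $\max_i|x_i|\approx\sqrt{2t\cdot 2\log m}$. I will make this rigorous with a supersolution barrier: a shrinking ball or cylinder argument does not suffice, so I would instead show that the origin is still inside the flow at time $t$ whenever $4t\log m<1-o(1)$. Concretely, I compare with the flow of a subset where Carol's best response is explicit, or use the variance lower bound from Gaussian extremes via Paul's strategy of choosing i.i.d.\ random tangent vectors. The resulting estimate is $\|y\|_\infty\ge(1-o(1))\sqrt{t\log m}$ after the loss from projecting out $\nu$ and from Carol's adversarial signs.

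\textbf{Main obstacle.} I expect the main obstacle to be showing that Carol's sign choices cannot beat the Gaussian maximum by more than the stated constant in the lower bound. The gap of $\sqrt2$ between the two bounds reflects exactly this loss, so the plan accepts it rather than trying to close it.
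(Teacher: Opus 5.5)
You have attached the two halves of your plan to the wrong inequalities.

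\textbf{The Carol half.} A bound for Carol (an upper bound on $\|y\|_\infty$) shows the origin survives. So it bounds $\tau$ from \emph{below}, equivalently it bounds $\lim_T K_T(m,n)/\sqrt T$ from above; only a guarantee for Paul can bound $\tau$ from above.

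Your estimate $\|y\|_\infty\le 2\sqrt{t\log(2m)}$, in the normalization with steps $\sqrt2\,dW$, gives $\tau\ge 1/(4\log 2m)$. That is $2\tau\ge(1-o(1))/(2\log m)$, exactly the left inequality of the statement, equivalent to the upper bound $(\sqrt2+o(1))\sqrt{\log m}$ on the game value. It is not ``too weak by a factor of $2$'' for the right inequality, and no refinement of a Carol potential can produce an upper bound on $\tau$. Note also that your ``refined'' bound $\sqrt{2t\cdot 2\log m}$ is again $2\sqrt{t\log m}$.

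Correctly labelled, this half is a valid alternative to the paper's supersolution. The paper uses the heat flow $\overline\psi$ of $\|x\|_\infty$ with diffusivity $\frac12$. This flow is convex, so $\tr(\Pi(\nabla\overline\psi)D^2\overline\psi\,\Pi(\nabla\overline\psi))\le\Delta\overline\psi$, giving $w(0,-1)\le\mathbb{E}\|G\|_\infty$; the argument works for any norm.

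You can instead use $\frac1\beta\log\sum_i 2\cosh(\beta x_i)-\frac\beta2 t$. The projection orthogonal to the gradient kills the negative rank-one part of the Hessian of the log-sum-exp, so $H_n\le\beta$, and optimizing $\beta$ gives $w(0,-1)\le\sqrt{2\log(2m)}$. This must be run as a viscosity supersolution of the limiting PDE together with Theorem~\ref{t:comparison principle}. It should not be run as a per-step potential in the discrete game: there Paul's vectors need not be tangent, and a single step can have $|A\bar\varepsilon|^2$ of order $m$.

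\textbf{The genuine gap.} The gap is the other inequality, $\lim_T K_T(m,n)/\sqrt T\ge(1-o(1))\sqrt{\log m}$, i.e.\ $2\tau\le(1+o(1))/\log m$. Your Paul paragraph contains no argument. It lists options and then calls ``main obstacle'' precisely the claim to be proved: that the state-dependent projection (Carol's adaptive signs) costs at most a factor $\sqrt2$. It also mixes a survival statement (``the origin is still inside at time $t$'', which is Carol's side) with an extinction statement. And if $\|y\|_\infty\ge(1-o(1))\sqrt{t\log m}$ is meant in your $\sqrt2\,dW$ normalization, it would only give $2\tau\le(2+o(1))/\log m$.

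Random tangent vectors plus Gaussian extremes do not settle this. In the limit, Paul's effective moves are orthogonal to the gradient of the value function, not of $\|\cdot\|_\infty$. So you need a subsolution, for instance a heat-type barrier $\psi$ with $\tr(\Pi(\nabla\psi)D^2\psi\,\Pi(\nabla\psi))$ bounded below by a fixed multiple of $\Delta\psi$. The discarded term $(D^2\psi\hat p,\hat p)$ is not negligible: for a smoothed maximum it can approach $\frac12\Delta\psi$.

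The paper supplies exactly this ingredient:
\begin{itemize}
\item Take $\psi$ solving $\partial_t\psi+\frac14\Delta\psi=0$ with $\psi(\cdot,0)=\max_i x_i\le\|x\|_\infty$.
\item Heat flow preserves monotonicity, submodularity and $g(x+s\mathbf 1)=g(x)+s$. Hence $-D^2\psi$ is a Laplacian matrix, $\nabla\psi\ge0$, and $\sum_k\partial_k\psi=1$.
\item Lemma~\ref{l:submodularity infinity Laplacian bound} then gives $(D^2\psi\hat p,\hat p)\le\max_k\partial_{kk}\psi\le\frac12\Delta\psi$. Therefore $\partial_t\psi+\frac12\tr(\Pi(\nabla\psi)D^2\psi\,\Pi(\nabla\psi))\ge\partial_t\psi+\frac14\Delta\psi=0$.
\item Comparison yields $w(0,-1)\ge\mathbb{E}[\max_iG_i]/\sqrt2=(1-o(1))\sqrt{\log m}$.
\end{itemize}
The diffusivity $\frac14$ in place of $\frac12$ is the $\sqrt2$ loss you anticipated. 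Without this lemma, or a substitute, the lower bound on the game value is unproved.
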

These  estimates are based on a more general result covering
centrally symmetric convex bodies, see  Theorem \ref{t:extinction time asymptotics} and Remark \ref{rem:gaussian_constants}; they  are obtained by means of comparison principle arguments, relating the solution of \eqref{e:motion by mean curvature} to solutions of a linear heat equation. We believe our   bounds  are of interest independent of the online Koml\'os  game. As we discuss in Section \ref{sec:previous_work}, smooth potentials, like the hyperbolic cosine and  exponential weights,  have been used extensively to prove \emph{upper} bounds in offline and online vector balancing problems. On the other hand, our use of a potential (the heat equation solution) to  establish a \emph{lower} bound appears less common in this setting (one example is \cite{BJSS20}). A similar approach has been used to establish lower bounds on regret in online learning, see for example \cite{kobzar, kobzar_geom, kobzar22, kobzar26}.

\begin{figure}
    \centering
    \hspace{0.5in} \includegraphics[scale=0.2]{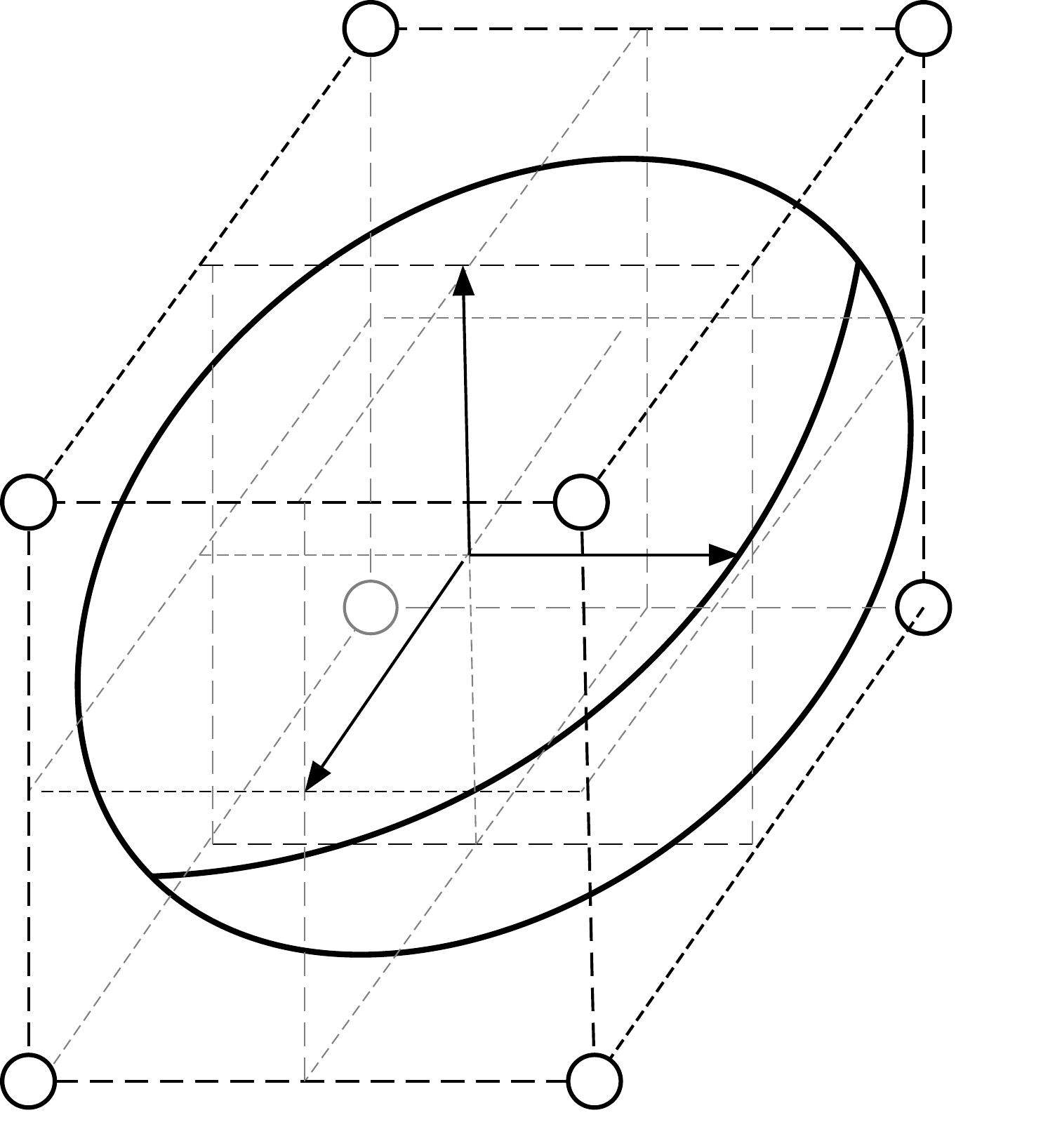}
    \caption{We have a convex body $E$ and a frame with origin placed at the body's center. If $A$ is the matrix whose columns represent the vectors in the frame, then as $\bar \varepsilon$ ranges over $\{-1,1\}^3$ we obtain $8$ vectors $A\bar \varepsilon$ whose locations are marked above by white circles. In the classic Koml\'os problem Paul aims to choose the $n$ (in this figure, $n=3$) column vectors of $A$ so that the $2^n$ vertices $\{A\overline \varepsilon\}_{\overline\varepsilon}$ of the frame it defines all lie outside some $\lambda E$, for a factor $\lambda$ to be made as large as possible within the constraints for $A$.}
    \label{fig:Classic Komlos}
\end{figure}

\subsection{The classic and online Koml\'os problems}  As noted earlier, when $T=1$, our game reduces to a classic  problem, which is subject to a long-standing conjecture attributed to J\'anos Koml\'os \cite {Spe85}: 
\begin{align*}
 \emph{The Koml\'os conjecture: }  K_1(m,n) = O(1) \text{ as }  m,n\to \infty. 
\end{align*}
 Without loss of generality, it is sufficient to resolve this conjecture when $n = m$  \cite{BECK19811},  \cite[Lecture 5]{Spe94}. In the 90's,  Banaszczyk established  estimates for a broad class of vector balancing problems \cite{Banaszczyk1993, Ban98}, obtaining an $O(\sqrt{\log m})$ upper bound for $K_{1}(m,n)$ \cite {Ban98}.  Paul can achieve the trivial lower bound of 1 on $K_{1}(m,n)$ by choosing an arbitrary orthonormal set of $m$ vectors in $\R^m$, and Kunisky  improved this lower bound  to  $1 +\sqrt{2} -o(1)$ as $m$ gets large \cite{Kun23}. In  recent work, Bansal and  Jiang  improved on the state of the art by proving  an $\tilde O(\log ^{1/4} n)$ upper bound in the  Koml\'os setting \cite{BJ2025}.\footnote{The prefactor inside $\tilde O(\cdot)$ is polynomial in $\log\log n$, specifically $(\log\log n)^c$ for some fixed exponent $c>0$ uniformly in $m$.} 

The Koml\'os conjecture is a statement about the asymptotics for $K_{T}(m,n)$ for $T=1$ as the dimension $m$ and number of vectors $n$ become large, while our Theorems \ref{thm:main} and \ref{thm:asymptotics as m goes to infinity}  cover the large $T$ regime, and consider the large $m$ and $n$ limit only after passing to the limit in $T$. It is an interesting possibility that the behavior of $u(x,t)$ solving \eqref{e:motion by mean curvature} with $u(x,T) = \|x\|_\infty$ as $t\to T$ might  encode information about the offline Koml\'os problem. We have tried without success to find such a connection. There might be a connection, or it might be that information about $K_{1}(m,n)$ is lost in the continuum limit.

Nonetheless, as random walk and potential-based iterative methods are used broadly in offline discrepancy problems, we hope  our results will provide a helpful point of view for these methods. We also hope the PDE perspective proves helpful to other types of vector balancing problems, such as online problems with oblivious adversary or random vectors sampled from a fixed distribution. These problems  are reviewed in Section \ref{sec:previous_work}. 

Determining the dimensional asymptotics of the ``large $T$ asymptotics'' is a localization of the original Koml\'os conjecture, as we now explain. It will be convenient to consider the Koml\'os problem for a general norm $\|\cdot\|_E$ 
\begin{align*}
  \mathcal{K}_{n,1}(E) := \max_{A}\min_{\overline \varepsilon}\|A\overline \varepsilon\|_E, 
\end{align*}
where $E$ stands for the unit ball for the norm (later, the case $\|\cdot\|_\infty$ will be approximated by other norms). From Paul's perspective, the classic Koml\'os problem consists (see Figure \ref{fig:Classic Komlos} above) in finding $a_1,\ldots,a_n$ so that no matter what Carol chooses for $\varepsilon_1,\ldots,\varepsilon_n$ Paul can reach, when starting from the origin, and in one step, as large as possible a multiple of $\partial E$. As such,  Paul must consider all of $E$ in choosing $a_1,\ldots,a_n$. Heuristically speaking, this is a global problem in $E$.

Contrast this with what happens in the online Koml\'os game with respect to $\|\cdot\|_E$ after a large number of steps (see  Figure \ref{f:online Komlos}), that is, when we are considering for a large $T$,
\begin{align*}
  \mathcal{K}_{n,T}(E) := \max_{A_1}\min_{\overline \varepsilon_1}\ldots\max_{A_T}\min_{\overline \varepsilon_T}\|A_1\overline \varepsilon_1+\ldots+A_T\overline \varepsilon_T\|_E.
\end{align*}
Consider a vector $y_{T-1}$ with the property that
\begin{align*}
  \mathcal{K}_{n,T}(E) = \max_{A}\min_{\overline \varepsilon}\|y_{T-1}+A\overline \varepsilon\|_E,
\end{align*}
in particular, we expect $\lambda := \|y_{T-1}\|_E$ to be large when $T$ is large (see \eqref{e:intro value function bound} later on). In this case, Paul finds $y_{T-1}$ placed on a large rescaling $\lambda \partial E$ of $\partial E$, and he is trying to reach a set $(\lambda+h) \partial E$ for as large a value of $h>0$ as possible. However, since the columns of $A$ have $\ell_2$ norm at most $1$,  the available positions for Paul which are all of the form $y_{T-1}+A\overline\varepsilon$ see only a relatively small region of $\lambda \partial E$ -- since we expect $\lambda$ to be large when $T$ is large. 

Then, if $\partial E$ is smooth, the region of $\lambda \partial E$ immediately accessible to Paul from $y_{T-1}$ (see Figure \ref{f:online Komlos}) will be approximately an ellipsoid (again, provided  $T$ and thus $\lambda$ is large).  This ellipsoid will be one passing through $y_{T-1}$ and have the same curvature at $y_{T-1}$ as $\partial E$, indicating that in this regime ($T$ large) the curvature of $\partial E$ governs how $\mathcal{K}_{n,T}(E)$ is changing with $T$. This heuristic picture is discussed further in Section \ref{s:heuristics and overview of the proof}.

\begin{figure}
    \centering
    \hspace{0.5in} \includegraphics[scale=0.25]{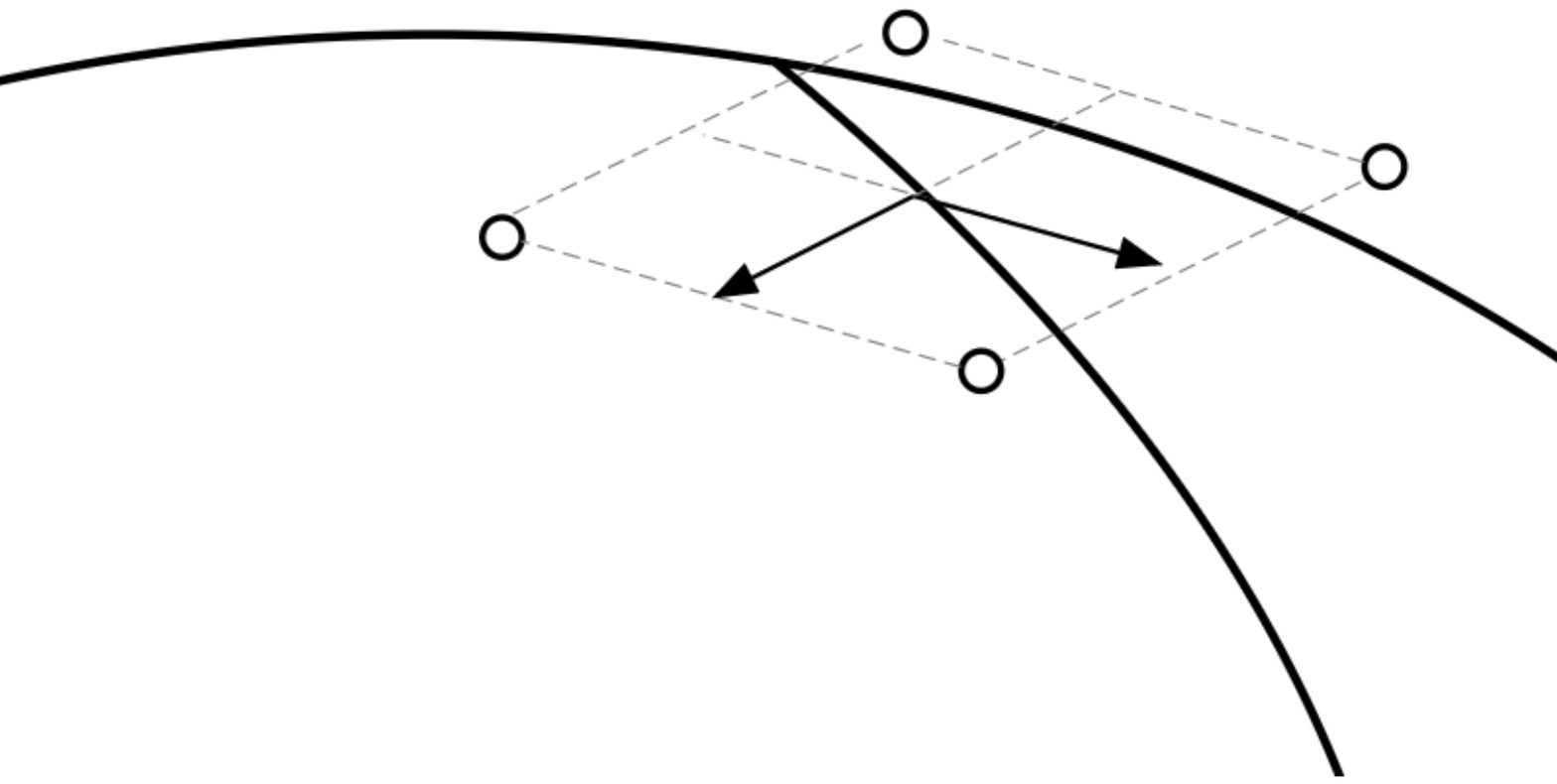}
    \caption{For the online problem ($m=3,n=2$ in this figure) we expect after a large number of steps $t$ that $y_t$ will lie on $\lambda \partial E$ for a large $\lambda$. Moreover, since the column vectors of $A$ have length at most $1$, the game is effectively zooming in to a small portion of $\partial E$. If the body $E$ is smooth, the area where Paul and Carol play is well approximated by paraboloid tangent to $\partial E$ at $\hat y_t := \|y_t\|_E^{-1}y_t$. Therefore, Paul's choice for the two vectors (and corresponding frame of $2^n$ points) is largely determined by the curvature of $\partial E$ at $\hat y_t$.}
    \label{f:online Komlos}
\end{figure}

\subsection{Notation} 
\label{sec:notation} For a vector $x$ in $\R^n$,  $\|x \|_p$ denotes its $\ell_p$ norm, and  $|x |:=\|x\|_2$ specifically denotes the Euclidean ($\ell_2$) norm. For vectors $x$ and $y$ in $\R^m$, $(x, y)$ denotes their Euclidean inner product. The vector $e_i$ represents the $i$-th canonical basis vector of $\R^m$. Given a centrally symmetric body $E \subset \mathbb{R}^m$,  $\|\cdot\|_E$ denotes the unique norm in $\mathbb{R}^m$ whose unit ball is equal to $E$. Its  dual norm $\| \cdot\|^*_E$ is given  by $\|z\|^*_E:= \sup_{\| y\|_E\leq 1} ( z, y )$.  For a matrix $M \in \R^{m  \times m}$,  its operator norm is denoted by $\|M\|$. Finally, the discrepancy of a matrix $A \in \R^{m \times n}$ is defined as 
\[
\text{disc}(A) :=\min_{\bar \eps \in \{ \pm 1\}^n }  \|A \bar \eps\|_ \infty.
\] 

\subsection{Previous work on vector balancing} 
\label{sec:previous_work} 

The taxonomy of vector balancing problems  is based on two primary characteristics. The first one specifies how the vectors are chosen. When Paul chooses vectors adversarially, as in our  game, this setting is called the \emph{worst-case} discrepancy. On the other hand, the \emph{average case}  discrepancy refers to the setting when the vectors are sampled independently from a distribution known to Carol.  The second primary characteristic specifies when the vectors are revealed to Carol: in the \emph{offline} setting Paul reveals all of them before Carol chooses any signs, while in the \emph{online} setting, Paul reveals the vectors  one-by-one and Carol irrevocably chooses the sign of each vector revealed in a given period, before the next vector is revealed \cite{Spencer77}. Our online Koml\'os  game interpolates between  the purely online setting when Paul reveals one vector at a time  and is the strongest adversary ($n=1$), and the purely offline setting when Paul reveals all vectors at once and is the weakest adversary ($T=1$).

Offline and online vector balancing problems have been studied in large part using randomized and/or potential-based iterative methods.  Since it is plausible that some of these problems may be analyzed using  PDE-based methods,  we will now review some of the previous work on vector balancing, highlighting connections to those iterative methods.  Due to space limits, we could not survey all potentially relevant recent discrepancy problems, such as Gaussian discrepancy  \cite{CGRT22}, online geometric and interval discrepancy \cite {JKS19, BJSS20}, bad science matrices \cite{steinerberger2024badsciencematrices, albors2025structurebadsciencematrices, sinha2025structureextremalbadscience}, or matrix discrepancy \cite{BKMZ24, kunisky2025asymptoticboundsonlinealgorithms,bandeira2026matrixdiscrepancyrepresentationsfinite}.
 We refer the reader to the ICM survey by Bansal for a more comprehensive and in-depth treatment of  discrepancy theory and related algorithms \cite{Bansal22}.

\textbf{Offline worst case.} One of the classic  discrepancy minimization problems entails coloring the  elements of a set $U$ of size $n$ with binary colors $\{\pm 1\}^n$  so that  each member of a given collection  of $m$ subsets of $U$ is colored in the most balanced way possible. This collection can be represented by the incidence matrix  $A \in \{0,1\}^{m\times n}$, and the balance of coloring is measured by $\text{disc}(A)$. For a closely related class of matrices: $A \in \R^{m \times n }$ and all $|A_{ij}|\leq 1$ (we will refer to this setting  as the \emph {set coloring}),  Spencer established that \[
\text{disc (A)} =O \big(\sqrt {n \log( 2m /n) }\big) 
\]by a partial coloring method using the binary entropy potential \cite {Spe85}. This result was also obtained by  Gluskin  using a different method also based on partial coloring \cite{Glushkin89}, and Giannopoulos provided a simplified version of Gluskin's proof  \cite{Giannopoulos1997}.  When $n \geq m$,  this  upper bound can be taken to be   $5.32 \sqrt {m}$,  which led to its memorable ``six standard deviations suffice" characterization. (Pesenti and Vladu \cite{PesentiVladu2023} improved the constant prefactor in the set discrepancy setting from 5.32 to 3.68.) A matching $\Omega (\sqrt{m})$ lower bound in this setting is achieved by Hadamard matrices \cite{Chazelle2000, AS2000}.

Spencer also showed that if Carol  can delete a constant fraction of vectors, i.e., set their colors to zero, then the Koml\'os conjecture holds for this modified problem\cite{Spe85}. Bandeira, Maillard and  Zhivotovskiy \cite{bandeira2022remark} provided an alternative proof of this result  based on the partial coloring theorem of Kashin \cite{Kashin85}, and yet  another earlier proof of this  result was given by Lonke \cite{Lonke98}.  On the other hand, Chasapis and  Skarmogiannis showed that if Carol is prevented from using all possible colorings, specifically if the feasible set of $\bar \eps $ is $S  \subset \{\pm 1 \}^m$ has  at most $2^{\delta n}$ colorings for $\delta \in (1/m, 1)$, then  the discrepancy  in this modified  problem is  lower bounded as $\Omega \big(\sqrt{\log(e/\delta)}\big)$ \cite{CS21}, and  Hajela established a similar result earlier \cite{hajela88}. Recently, Smirnov and  Vershynin gave a Fisher information perspective on partial coloring based on the first Dirichlet eigenvalue of the hypercube \cite{SV26}.

Bansal developed the first polynomial-time  algorithm for \emph{partial} coloring that achieves Spencer's upper bound using  a random walk with covariance determined by semidefinite programming (SDP)\cite{Bansal10}. Subsequently, Lovett and Meka developed a simpler algorithm  that achieved the same upper bound using  a random walk inside the unit hypercube in  a polytope  $\R^m$ with certain discrepancy constraints  \cite{LM15}.   Rothvoss, as well as Eldan and Singh, developed partial coloring algorithms that generalize to symmetric convex bodies beyond polytopes \cite{Rothvoss17, ES18}. Charikar, Newman, and Nikolov  established that it is NP-hard to distinguish between set systems with discrepancy zero and $\Omega(\sqrt {m})$; thus we do not expect to be able to efficiently find an optimal coloring for set systems with an $o(\sqrt {m})$ discrepancy \cite {CNN11}. On the other hand, a closely related quantity called \emph{hereditary discrepancy}  of a matrix $A$, or $\text{heredisc}(A)$, that upper bounds  $\text{disc} (A)$, can be efficiently approximated \cite{MNT20}. 

 Bansal, Dadush, and Garg  developed the first efficient algorithm based on a random walk with covariance  determined by an SDP solution that achieved Banaszczyk's upper bound in the Koml\'os    setting \cite{BDG19}. Levy, Ramadas and Rothvoss  developed a   deterministic algorithm based on the  multiplicative weights potential that achieved this upper bound; this algorithm also achieves the  $O(\sqrt {m})$ bound in the set discrepancy setting \cite{LRR17}. Subsequently, Bansal,   Dadush,  Garg and Lovett   developed an algorithm that achieves Banaszczyk's bound for vector balancing in general convex bodies; their algorithm is based on a random walk that evolves in  directions obtained by the Gram-Schmidt process \cite{BDGL19}. Harshaw, S\"avje,  Spielman, and Zhang improved the variance bounds of the signed vectors sums  obtained in this process \cite{HSS24}, and Bednorz and Godlewski  used this Gram-Schmidt walk  to improve (i) the constant prefactor in Banaszczyk's $O(\sqrt {\log m})$ upper bound in the Koml\'os setting and (ii) the discrepancy bounds in the smoothed analysis of the Koml\'os  conjecture with Gaussian noise (this setting is further discussed below)\cite{BG26}.     Lastly, Bansal, Laddha and Vempala gave a deterministic algorithm based on 
a time-dependent barrier function that achieves all the bounds described above -- Spencer's  set coloring bound $O \big(\sqrt {n \log( 2m /n) }\big) $, Banaszczyk's upper bounds in the Koml\'os   setting -- as well as certain bounds in the spectral hypergraph and Gaussian random matrix settings \cite{BLV22}.  

Bansal,  Jiang,  Meka,  Singla and Sinha  applied the smoothed analysis of Spielman and Teng \cite{ST04} to show that the Koml\'os conjecture holds asymptotically almost surely (a.a.s.) when $n= \omega(m \log m)$ and $A$ is perturbed by adding small Gaussian noise \cite {BJMSS22} (see also \cite{BHMSS22} for the smoothed analysis of the prefix discrepancy). Aigner-Horev, Hefetz and Trushkin established a similar result  when $A$ is perturbed by Rademacher noise \cite {AHT25}.

\textbf{Offline average case.} While the  Koml\'os conjecture remained unresolved, a substantial amount of work was dedicated to estimating discrepancy of special classes of random and pseudorandom matrices. For  Gaussian matrices $A \in \R^{m \times n}$ with each $A_{ij} \sim N(0,1)$ independently, Costello established that $\text{disc}(A) = \Theta (2^{-n/m}\sqrt{n})$ a.a.s. when $m$ is fixed \cite{Costello09},  and Turner, Meka and Rigollet established the same result when $\omega(1) = m =o(n)$ \cite{TMR20}.\footnote{Thus, $\text{disc}(A) = O(1)$ a.a.s. as long as $n \geq c\, m \log m$ for a some uniform constant $c$. By standard Gaussian properties, each column $A_{:i}$ has unit $\ell_2$ norm  a.a.s., and by the union bound $A/\sqrt {m}$  satisfies the hypothesis of the    Koml\'os conjecture a.a.s.} In the Gaussian matrix setting, Chandrasekaran and Vempala upper bounded  discrepancy as $O(\sqrt {n \log (m/n)})$, modulo a polylog term in $m$ and $n$, when $2n \leq m \leq 2^n$ \cite {CV14}.   Pesenti and Vladu gave an algorithm based on the Stieltjes regularizer  of the max function that  was used to prove the Koml\'os conjecture  for  certain pseudorandom matrices, as well as for random orthogonal and random Gaussian matrices \cite{PesentiVladu2023}.     

In the discrete random matrix setting, Ezra and Lovett showed that when the columns of $A\in \R^{m \times n}$ are sampled from $s$-sparse vectors in $\{0,1\}^m$ uniformly and  $m \geq n \geq s$,  $\text{heredisc}(A) = O(\sqrt {s \log s})$ and when $n \gg m^s$,  $\text{heredisc}(A) = O(1)$ with high probability \cite{EL19}. Hoberg and Rothvoss showed $\text{disc}(A) \leq 1$ with high probability when $A_{ij} \sim \text{Bernoulli}(p)$ independently provided that $n = \Omega (m^2 \log m)$ and $mp = \Omega (\log n)$ \cite{HR19}. Franks and Saks showed that $\text{disc}(A) \leq 2$ with high probability for a rather broad class of random $s$-sparse matrices when   $n = \Omega (m^3 \log^2 m)$ \cite{FS20}.  Altschuler and Niles-Weed showed $\text{disc}(A) = O(1)$ with high probability when $A_{ij} \sim \text{Bernoulli}(p)$ independently and $n \geq C m \log m$ where $p$ may be varying with $n$; they also provided a  discrepancy upper bound for a broad class of Poisson random matrices \cite{AN22}.

 \textbf{Online worst case with adaptive adversary.}    As noted earlier, in the context of online discrepancy, Paul is called an adaptive adversary whenever he is able to choose the vectors based on Carol's choices of signs at earlier times.  Spencer showed that in the online version of the set discrepancy problem, where Paul chooses $a_t$ with $\|a_t\|_\infty \leq 1$  adaptively, the  discrepancy is  $\Theta (\sqrt {T \log T})$; the upper bound is established using the $\cosh$ potential, while the lower bound is established by  analyzing the distribution of certain hitting times of a one-dimensional random walk \cite[Lecture 4]{Spe94} (see also \cite{Spencer86}).   Haghtalab,  Roughgarden and  Shetty gave an algorithm that guarantees an $\tilde O(\log^2(mT/\sigma))$ upper bound on discrepancy  in a modified version of our online Koml\'os game  with $n=1$ where the adversary selects the vectors from an adaptive $\sigma$-smooth isotropic distribution where density is upper bounded pointwise by $1/\sigma$ times the density of the uniform distribution. Their algorithm uses  the $\cosh$-based potential  introduced by  Bansal, Jiang, Meka,  Singla and  Sinha in the  online average case   discussed below  \cite {HRS24,BJMSS21}. Without smoothing our online Koml\'os  game with  $n=1$ appears trivial: by equation \eqref{e:intro value function bound}, 
 \[
 1/\sqrt{2} \leq K_{t}(m,1)/\sqrt{t} \leq 1,
 \] uniformly in $m$ (see also the discussion of Game 1 in \cite {Spencer77}). Perhaps due to this, much of the recent online vector discrepancy literature focused on  the average case discrepancy, as well as  adversaries  that are  not adaptive, the so-called \emph{oblivious} adversaries. 

\textbf{Online average case.} When the vectors are sampled from the uniform  distribution over $\{\pm 1\}^m$ independently and revealed to Carol online, Bansal and Spencer gave  a strategy based on an inverse polynomial potential that guarantees with high probability an $O (\sqrt {m})$ discrepancy, as well as another  strategy based on the $\cosh$ potential that guarantees an $O (\sqrt {m } \log m)$ prefix discrepancy  \cite {BS20} (the \emph{prefix discrepancy}  controls the $\ell_\infty$ norm of the partial sums $y_t$ uniformly across time, not  just at the final time). When the vectors  are sampled independently from an arbitrary distribution over $[-1,1]^m$ with potentially non-independent entries, Bansal, Jiang, Singla  and Sinha gave an $O(m^2( \log T + \log m))$ online prefix discrepancy bound using the $\cosh$ potential \cite {BJSS20}. Improving on this result, Bansal, Jiang, Meka, Singla  and Sinha gave an $O( \log^4 (mT))$  bound on online prefix discrepancy when vectors are sampled independently from an arbitrary distribution on the unit $\ell_2$  ball in $\R^m$    \cite{BJMSS21}. 

A special case of the Koml\'os conjecture is the \emph{Beck-Fiala conjecture}: for any $A \in \{0,1\}^{m\times n}$ with column sparsity of at most $s$,  $\text{disc}(A) = O(\sqrt {s})$\cite{BECK19811}. In this setting, Altschuler and Tikhomirov established that there exists an asymptotic gap between the discrepancy in the online and offline vector balancing of independent uniformly random $s$-sparse vectors \cite{altschuler2025thresholdonlinebalancingsparse}. Finally, when the vectors are sampled independently from the standard $m$-dimensional Gaussian    and revealed to Carol online, Fiedler, Jackson, Lacker and  Niles-Weed  established  a $\Theta (\sqrt {m})$ discrepancy estimate in expectation; their lower bound applies more generally to all random vectors with i.i.d. mean zero and variance 1 entries with finite fourth moments\cite {FJLN26}. Although their methods are grounded in mean-field stochastic control, their use of a discrete value function defined by dynamic programming is similar to ours.

 \textbf{Online worst case with oblivious adversary.} In the oblivious setting, Paul still reveals the vectors to Carol sequentially. However,  he cannot use Carol's earlier choices of signs when he selects a vector in a given round. Since  every deterministic algorithm for choosing signs has the same  worst case discrepancy against an oblivious adversary as against the corresponding adaptive adversary (whenever the oblivious adversary correctly guesses the worst case pattern of vectors in the adaptive case) Carol is permitted to use randomized policies, and the  discrepancy is evaluated with high probability with respect to Carol's randomization. 

 In the online Koml\'os setting with an oblivious adversary, Kulkarni,  Reis and Rothvoss  achieved the state-of-the-art $O \big (\sqrt {\log (\min (m,T))}\big )$ discrepancy upper bound and $\Theta \big (\sqrt {\log T})$ prefix discrepancy upper and lower bounds, modulo a term that depends on the probability of attaining this bound\cite {KJR24}. Their upper bound strategy depends on the entire series vectors $a_1, \dots a_{t-1}$ (and therefore does not appear to be Markovian). Earlier, Alweiss, Liu and Sawhney  achieved an $O(\log mT)$ prefix discrepancy bound with high probability using a "self-balancing random walk" algorithm that is Markovian - it depends on  the current time $t$ and the inner product of the  partial sum 
 $\sum_{\tau=1}^{t-1}   \eps_{\tau} a_{\tau}$ with the vector $a_t$ revealed to Carol in the current period  \cite{ALS21}.

\subsection{Mean curvature flow and the level set PDE}\label{s:level set PDE} The nonlinear PDE \eqref{e:motion by mean curvature} is known as the level set formulation of mean curvature flow, and it is one of several alternative descriptions of this geometric evolution. The mean curvature flow appears in many areas of pure and applied mathematics, receiving thorough and complementary treatments from the perspectives of geometric measure theory \cite{brakke1978motion,tonegawa2019brakke}, the theory of nonlinear parabolic equations \cite{EvansSpruck1991,ChenGigaGoto1989,GigaGotoIshiiSato1991}, and geometric analysis \cite{colding2015mean}, among others. 

\textbf{Brakke's mean curvature flow}. The mean curvature flow was introduced by Brakke in his doctoral dissertation \cite{brakke1978motion}, and accordingly sometimes it is known as Brakke's mean curvature flow. Informally speaking, a time-evolving family of closed hypersurfaces $S_t \subset \mathbb{R}^m$ (enclosing domains $E_t$, so $S_t = \partial E_t$) is said to be evolving by mean curvature flow if the inner normal velocity of $S_t$ at any point $p\in S_t$ is equal to the sum of the principal curvatures of $S_t$ at $p$. 

The informal description above masks a number of deep analytical and geometrical difficulties revolving around the right definition of such a flow, and the nature of the singularities that may form over time even if one starts the flow from a smooth hypersurface. Brakke's original work \cite{brakke1978motion} falls within the framework of geometric measure theory, working with varifolds to formulate a notion of solution for this flow. For a modern treatment of the mean curvature flow from this perspective, see Tonegawa's monograph \cite{tonegawa2019brakke}.
 
\textbf{Huisken's theorem and extinction times.} A celebrated Theorem of Huisken \cite{Huisken1984} says that if the initial surface $S_0$ is the boundary of a convex body $E_0$, then the surface remains convex for the duration of the mean curvature flow. To be exact, Huisken showed there is a positive time $\tau = \tau(E_0)$ and a mean curvature flow $S_t$ defined for $t\in [0,\tau)$ and starting from $S_0$, and such that $S_t$ is the boundary of a smooth convex set for every $t\in (0,\tau)$, and that $S_t\to \{x_*\}$ in the Hausdorff topology to some point $x_* \in \mathbb{R}^m$. 

In Huisken's work the flow is represented via a map $F:\mathbb{S}^{m-1}\times [0,\tau) \to \mathbb{R}^m$ such that $F(\cdot,t)$ gives a parametrization of $S_t$, the evolution of the map $F(\cdot,t)$ with $t$ is governed by a nonlinear parabolic equation for the map $F$. As $t\to \tau$, the map $F(\cdot,\tau)$ converges to a constant (i.e. to the point to which the surfaces are shrinking to as $t\to \tau$). The time $\tau$ is known as the extinction time of $S_0$, or equivalently, of $E_0$ (see also \eqref{e:extinction time functional}). It defines a functional over the set  of convex bodies. In general, there is no explicit expression for the extinction time $\tau(E)$ for a given convex body $E$ (a notable exception is a ball). We note that work of Giga and Yama-uchi \cite{GigaYamauchi1993} provides an estimate for the extinction time in terms of the Cheeger constant of $E$, which is a fundamental quantity in geometry related to the  spectral gap of the Laplace-Beltrami operator \cite{Cheeger70}.\footnote{For purposes of the  Giga and Yama-uchi estimate, we include the portion of the perimeter on the hypercube boundary  in the definition of the Cheeger constant,(i.e., the ambient set is $\R^m$).  Therefore, the  estimates of the  Cheeger constant when  the hypercube boundary is excluded from the perimeter are not applicable in the  Giga and Yama-uchi setting (i.e., when the ambient set is the hypercube   itself; cf. Section 5.5 in \cite{Milman09}).}

\textbf{The level set formulation.} Level set formulations are a way of describing hypersurfaces $S$ of $\mathbb{R}^m$ in terms of a scalar function $u$, ``the level set function'', such that $S = \{u=0\}$. In this way, one can describe geometric quantities of $S$ in terms of an expression involving $u$. For instance, if $x\in S$, then the normal vectors to $S$ are given by $\pm \nabla u / |\nabla u|$. Note also that a family of hypersurfaces $S_t$ varying in time can be represented by a time-dependent level set function $u(x,t)$. For a general introduction to the level set method, see the book by Giga \cite{giga2006surface} and the book by Fedkiw and Osher \cite{osher2004level}. 

A standard computation shows that if $u(x,t)$ is a smooth level set function and $\nabla u \neq 0$,  then for each $\lambda$ the level set $S_t(\lambda) = \{ x : u(x,t) = \lambda\} = \partial \{ x: u(x,t)<\lambda \}$ is evolving in time with an inner normal velocity $V$ at $x \in S_t(\lambda)$ given by 
\begin{align}\label{e:level set velocity}
  V = \frac{\partial_t u}{|\nabla u|}.    
\end{align}
This brings us to level set equations which are also parabolic\footnote{For brevity, throughout this paper we use ``parabolic'' to refer to ``backward parabolic equations'', as we will only be working with backward parabolic equations, in accordance to the structure of the online Koml\'os game.} PDEs. These are equations of the form
 \begin{align*}
   \partial_t u + |\nabla u|F(D^2u,\nabla u) = 0,
 \end{align*}
 where $F(D^2u,\nabla u)$ is a degenerate parabolic operator, meaning that for a fixed $p$ we have $F(M,p) \leq F(M',p)$ whenever $M'-M$ is a positive semi-definite matrix.  
In light of the formula \eqref{e:level set velocity}, the significance of the parabolic PDE for $u$ is that its level sets are moving with normal velocity equal to $F(D^2u(x),\nabla u(x))$. The level set equation is in addition called geometric if $F(M,p) = F(M+s p\otimes p,p)$ for every $M$, $p\neq 0$, and $s\in\mathbb{R}$. This means that the value of $F(D^2u(x),\nabla u(x))$ at some $x=x_0$ is determined by the shape of the level set of $\{ u = u(x_0)\}$. 

As an important example, the operator $F$ giving the level set formulation of mean curvature flow is
\begin{align*}
  F(D^2u,\nabla u) = \tr(\Pi(\nabla u)(D^2u)\Pi(\nabla u))/|\nabla u| = \dive\big( \tfrac{\nabla u}{|\nabla u|}\big),
\end{align*}
where for $p\neq 0$,  $\Pi(p)$  denotes the $m\times m$ matrix representing the orthogonal projection onto the hyperplane normal to $p$. This matrix is given by $(\Pi(p))_{ij} = \delta_{ij}-|p|^{-2}p_ip_j$. Then, the level set PDE for mean curvature flow is 
 \begin{align*}
   \partial_t u + |\nabla u|\dive\big(\tfrac{\nabla u}{|\nabla u|}\big) = 0.
 \end{align*} 
These equations are fully nonlinear parabolic equations, and so they fall within the scope of the theory of viscosity solutions. The level set formulation was first studied by Evans and Spruck \cite{EvansSpruck1991} and Chen, Giga, and Goto \cite{ChenGigaGoto1989}. We also mention work of Giga, Goto, Ishii, and Sato \cite{GigaGotoIshiiSato1991} dealing with general flows and which will also be essential here. As discussed  earlier, Kohn and Serfaty \cite{KohnSerfaty2006} found the level set PDE for mean curvature flow arises as a continuum limit in vector balancing games. Kohn and Serfaty  subsequently extended the deterministic games framework to general fully nonlinear parabolic and elliptic equations \cite{kohn2010}, and Imbert and Serfaty extended it to include integral curvature flows \cite{IS11}. Concurrently, work of Peres, Schramm, Sheffield, and Wilson \cite{peres2009tug} touched on parallel themes as they established a relationship between tug-of-war games and the infinity Laplacian. There is a $1$-parameter family of problems connecting the infinity Laplacian \cite{peres2009tug} and mean curvature flow \cite{KohnSerfaty2006}: they are two extremes in the class of evolution $p$-Laplacian equations ($p\in [1,\infty]$). We also mention works of Charro, Garc\'ia Azorero, and Rossi \cite{charro2009mixed}, as well of Gonzalvez, Miranda, Rossi, and  Ruiz-Cases \cite{GMRR26} involving two-player zero-sum probabilistic games.

  \textbf{More general flows}. One may evolve $S_t = \partial E_t$  according to some other function of the principal curvatures of $S_t$, leading to parabolic level set PDEs as above, as well as to PDE's for maps $F:\mathbb{S}^{m-1}\to\mathbb{R}^m$ in the smooth setting.  We mention in particular work of Andrews \cite{andrews1994contraction}, which considered general geometric flows starting from convex initial data. An important result in \cite{andrews1994contraction} is the preservation of convexity under natural structural assumptions on the flow (i.e. structural assumptions on the function of principal curvatures used). This is important in defining extinction times for other flows besides mean curvature. 
  
  One possibility is the flow where the normal velocity equals the sum of the largest $n\wedge (m-1)$ principal curvatures of $S_t$. The resulting evolution also produces a family of convex bodies that shrink down to a point at some finite time $\tau_{n}(K)$.  When $n\geq m-1$ this is the usual mean curvature flow, and when $n<m-1$ we refer to this as flow by the largest $n$ principal curvatures. Flows with $n=1$ have been shown to arise in stochastic control problems \cite{larsson2024minimum} (see next paragraph).

\textbf{Stochastic optimization interpretation.} Prior to Kohn and Serfaty's deterministic game setting, various works showed mean curvature flow captures the reachability set for certain stochastic control problems. We mention here the works of Soner and Touzi \cite{soner2002dynamic,soner2003stochastic}, of Buckdahn, Cardaliaguet, and Quincampoix \cite{buckdahn2001representation}. More recently, Larsson and Ruf \cite{larsson2024minimum} found that flow by the minimum curvature describes the largest (deterministic) amount of time that a certain martingale can be kept inside a given convex body.

\textbf{Viscosity solutions and fully nonlinear equations}. Viscosity solutions are a powerful tool when dealing with value functions for games or control problems involving in continuum settings. As we have known since works of Bellman\cite{Bellman1957} and  Pontryagin \cite{Pont1985}, a sufficiently smooth value function will solve a PDE associated to the problem, the Hamilton-Jacobi-Bellman equation. However, value functions are typically not smooth everywhere, and one must deal with generalized solutions. Among several notions of ``weak solutions'' to such nonlinear equations, viscosity solutions stand out for producing a complete theory of existence and uniqueness, and for correctly characterizing the weak formulation of the PDE for non-smooth value functions. Just as important is the role of viscosity solutions in the context of continuous limits, where we have Barles and Souganidis \cite{BarSou1991} celebrated theorem on the convergence of finite difference schemes to the unique viscosity solution of a PDE.

Viscosity solutions are built around a natural monotonicity property of the nonlinear operator, and which is directly related to the maximum principle. This monotonicity, sometimes known as the Global Comparison Property (GCP), imposes severe restrictions on the class of operators. In particular, Courr\`ege showed that a linear operator in $\mathbb{R}^n$ satisfying the GCP is essentially the infinitesimal generator of a Markov process \cite{courrege1965}. The corresponding result for nonlinear operators was obtained by Guillen and Schwab \cite{GuiSch2019,GuiSch2020}, where they show that a nonlinear operator having the GCP can be expressed as a min-max of linear operators of the type obtained by Courr\`ege.

For monographs on viscosity solutions we recommend the book of Fleming and Soner \cite{fleming2006controlled} and the book of Bardi and Capuzzo-Dolcetta \cite{bardi1997optimal}, both of which also cover the perspective from optimal control. We also recommend the lecture notes by Silvestre \cite{silvestre15} as well as lecture notes by Calder \cite{calder24}.  Last but not least we mention the forthcoming monograph by Giga and Liu \cite{GL26}, which covers geometric evolution equations arising from deterministic discrete two-person games.

\section{Statement of the main results}\label{s:main results}

In this section, we state the main results of this paper in full generality, containing each of the special theorems discussed in the Introduction (Section \ref{sec:intro}).  For the rest of the paper $E$ will denote a centrally symmetric convex body in $\R^m$,  $m\geq 2$. This body defines a norm, which we denote by $\|\cdot\|_E$. Now, for any $T,n \in \mathbb{N}$ we define a functional over such convex bodies by
\begin{align*}
  \mathcal{K}_{n,T}(E) := \max_{A_1}\min_{\overline \varepsilon_1}\ldots \max_{A_T}\min_{\overline \varepsilon_T}\|A_1\overline \varepsilon_1+\ldots+A_T\overline \varepsilon_T\|_E,
\end{align*}
where for each $t=1\ldots,T$, the minimum is taken over all $\overline \varepsilon_s \in \{-1,1\}^n$ and the maximum is taken over all $m\times n$ matrices $A$ whose columns have at most unit Euclidean length each. 

As discussed in the introduction, given  $E$ and $n\geq 1$, there is well defined flow $E_t$  at least for small positive times,  $E_0 = E$ and $\partial E_t$ moves with normal velocity equal to the sum of the largest $n\wedge (m-1)$ principal curvatures of $\partial E_t$. The sets $E_t$ are all centrally symmetric convex bodies and the flow is defined up until some maximum interval of existence. Then, we define the \emph{extinction time} for $E$ with respect to the  flow by
\begin{align}\label{e:extinction time functional}
  \tau_n(E) := \sup\, \Big\{  t   \Bigm| 
\begin{tabular}{l}
    There exists a flow starting from $E$ with normal velocity equal to the sum  of  \\
     the largest  $n\wedge (m-1)$ principal curvatures,  and it is non-vanishing in $(0,t)$.
\end{tabular}   \Big \}.
\end{align}
For such a time, it is known that $E_t \to \{0\}$ in the Hausdorff distance as $t\to \tau_n(E)$.

\begin{theorem}\label{t:main theorem}
For any centrally symmetric convex body $E$, we have
\begin{align*}
  \lim \limits_{T\to \infty}\frac{\mathcal{K}_{n,T}(E)}{\sqrt{T}} = \frac{1}{\sqrt{2\tau_n(E)}}.  
\end{align*}
where $\tau_n$ is the extinction time defined in \eqref{e:extinction time functional}.

\end{theorem}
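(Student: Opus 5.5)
We will prove Theorem \ref{t:main theorem} by recasting $\mathcal{K}_{n,T}(E)$ through dynamic programming and passing to a continuum limit in the spirit of Kohn and Serfaty \cite{KohnSerfaty2006}, with one-homogeneity of the norm used to turn a large-time statement into a small-step one.

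\textbf{Discrete value function.} Define $V_T(y)=\|y\|_E$ and, for $t<T$,
\[
V_t(y)=\max_{A}\min_{\bar\varepsilon}V_{t+1}(y+A\bar\varepsilon),
\]
so that $\mathcal{K}_{n,T}(E)=V_0(0)$. Two preliminary properties follow from the structure of the game.

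\begin{itemize}
\item Since $\|\cdot\|_E$ is convex, Carol can always keep the increment in a half-space. For each column she chooses the sign making its pairing with a subgradient nonpositive, processing columns greedily. This gives the a priori bound $V_0(0)\le C\sqrt{T}$; the analogue of \eqref{e:intro value function bound} gives bounds of the same order.
\item $V_t$ is $1$-Lipschitz in the norm $\|\cdot\|_E$.
\end{itemize}

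\textbf{Parabolic rescaling.} Set $\varepsilon=T^{-1/2}$ and
\[
u^\varepsilon(x,s)=\varepsilon\,V_{\lfloor s/\varepsilon^2\rfloor}(x/\varepsilon),\qquad s\in[0,1].
\]
Then $u^\varepsilon(\cdot,1)=\|\cdot\|_E$. Each step of the scheme moves $x$ by $\varepsilon A\bar\varepsilon$ with $|Ae_i|\le 1$, and time advances by $\varepsilon^2$. The one-step operator
\[
S_\varepsilon[\phi](x)=\max_A\min_{\bar\varepsilon}\phi(x+\varepsilon A\bar\varepsilon)
\]
is monotone and commutes with constants. By the Lipschitz and a priori bounds, the family $u^\varepsilon$ is equicontinuous in $x$ and uniformly bounded on compacts. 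Equicontinuity in time should come from the a priori estimate applied from arbitrary starting points.

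\textbf{Consistency.} Compute $S_\varepsilon[\phi](x)-\phi(x)$ for smooth $\phi$ with $p=\nabla\phi(x)\ne0$. At first order $\sum_i\varepsilon_i\varepsilon(a_i,p)$ appears. Carol can make this negative unless all $a_i\perp p$, so Paul must choose $a_i\perp p$ up to $O(\varepsilon)$ errors. At second order we get
\[
\tfrac{\varepsilon^2}{2}\,(D^2\phi\,A\bar\varepsilon,A\bar\varepsilon).
\]
Write $M=\Pi(p)D^2\phi\,\Pi(p)$. Carol minimizes over signs the quadratic form $\sum_{i,j}\varepsilon_i\varepsilon_j (Ma_i,a_j)$, whose diagonal part $\sum_i (Ma_i,a_i)$ is sign-independent. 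This is where Banaszczyk-type input enters, or more simply an averaging argument. Choosing $\bar\varepsilon$ uniformly at random makes the off-diagonal terms average to zero, so Carol achieves at most the diagonal. Paul achieves exactly the diagonal by taking the $a_i$ to be orthonormal eigenvectors of $M$ (mutually $M$-orthogonal). Hence the max--min equals the sum of the $n\wedge(m-1)$ largest nonnegative eigenvalues of $M$. The limit equation is therefore
\[
\partial_s u+\tfrac12 F_n(\Pi D^2u\,\Pi)=0,\qquad u(\cdot,1)=\|\cdot\|_E,
\]
where $F_n$ is the sum of the largest $n\wedge(m-1)$ eigenvalues (positive parts). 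At $\nabla\phi=0$ and in the degenerate first-order regime we need the semicontinuous envelopes as in Giga--Goto--Ishii--Sato \cite{GigaGotoIshiiSato1991}.

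\textbf{Identifying the limit.} Barles--Souganidis \cite{BarSou1991} combined with the comparison principle for geometric equations \cite{GigaGotoIshiiSato1991} gives $u^\varepsilon\to u$ locally uniformly. For convex level sets the positive-part truncation is inactive, so the flow is the one in \eqref{e:extinction time functional}. By homogeneity and geometric invariance, $u(\cdot,s)$ has level sets $\{u=\lambda\}=\lambda\,\partial E_{(1-s)/(2\lambda^2)}$, using the parabolic scaling of the flow together with the factor $\tfrac12$. Thus $u(0,0)=\lambda$ exactly when $1/(2\lambda^2)=\tau_n(E)$, i.e.\ $\lambda=1/\sqrt{2\tau_n(E)}$. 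Since $V_0(0)/\sqrt T=u^\varepsilon(0,0)$, this gives the theorem.

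\textbf{Main obstacle.} The hardest step is the consistency analysis. The max--min over $A$ with only approximate orthogonality must be handled with uniform error control. Paul may choose $(a_i,p)=O(\varepsilon)$, which creates cross terms of the same order as the second-order term. These must be absorbed, by first letting Carol kill the first order greedily and then showing the residual is $o(\varepsilon^2)$, in the style of Kohn--Serfaty's inverse game. A secondary difficulty is the non-smoothness of $\|\cdot\|_E$ at the terminal time. I would handle this by approximating $E$ from inside and outside by smooth uniformly convex bodies and using continuity of $\tau_n$ and monotonicity of $\mathcal{K}_{n,T}$ in $E$.
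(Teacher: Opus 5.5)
\textbf{Gap: identifying the limiting operator.} Your overall architecture matches the paper's: dynamic programming, parabolic rescaling, half-relaxed limits closed by the Giga--Goto--Ishii--Sato comparison principle, identification of $\tau_n(E)$ via convex level sets and scaling, and approximation of nonsmooth $E$. The step that fails is the identification of the limiting operator, as you carried it out. To prove $\max_A\min_{\bar\varepsilon}(MA\bar\varepsilon,A\bar\varepsilon)=\sum_{j=0}^{k-1}\sigma_{m-j}(M_+)$ with $k=\min(n,m-1)$, Carol must reach the right-hand side against \emph{every} admissible $A$ with columns in $p^\perp$, not only against eigenvector frames. Random signs only give $\min_{\bar\varepsilon}(MA\bar\varepsilon,A\bar\varepsilon)\le\sum_i(Ma_i,a_i)$. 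This bound depends on $A$, and its maximum over admissible $A$ is $n\,\sigma_{\max}(M_+)$, attained by repeating a top eigenvector in every column.

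For example, let $M$ restricted to $p^\perp$ be a rank-one projection $v\otimes v$. The true max--min is $1$, since Carol can balance the reals $\sum_i\varepsilon_i(a_i,v)$ to within $1$. Yet the averaging bound for $A=(v\;\cdots\;v)$ is $n$. So averaging only traps the operator between $\sum_{j<k}\sigma_{m-j}(M_+)$ and $n\,\sigma_{\max}(M_+)$. For $n\ge m-1$ the upper bound even grows with $n$, so neither the limit PDE nor $\tau_n(E)$ is identified. Averaging would suffice only if Paul were restricted to orthonormal columns, where $\sum_i(Ma_i,a_i)=\tr(P_VMP_V)$.

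The missing ingredient is Banaszczyk's theorem (Theorem \ref{t:Banaszczyk s theorem}). Let $V=\mathrm{span}\{a_i\}\subset p^\perp$, so $\dim V\le k$. The vectors $z_i=M_+^{1/2}a_i$ lie in an ellipsoid whose squared semi-axes sum to $\tr(P_VM_+P_V)\le\sum_{j<k}\sigma_{m-j}(M_+)$. Hence some signing gives $(MA\bar\varepsilon,A\bar\varepsilon)\le|\sum_i\varepsilon_iz_i|^2\le\sum_{j<k}\sigma_{m-j}(M_+)$. This is Lemma \ref{l:H n is sum of positive eigenvalues} in the paper, and it is the one genuinely new input beyond Kohn--Serfaty.

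\textbf{Smaller points.} What you call the main obstacle is handled in a few lines. Write $\delta$ for the step size (your $\varepsilon$). The objective is odd plus even in $\bar\varepsilon$, so Carol's minimum equals $\min_{\bar\varepsilon}\{-2\delta^{-1}|(p,A\bar\varepsilon)|+(MA\bar\varepsilon,A\bar\varepsilon)\}$. This forces any optimal $A$ to satisfy $\|A^\top p\|_1=O(\delta\|M\|)$, and the resulting cross terms are $O(\delta/|p|)$.

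Your first justification of the a priori bound also runs the wrong way. Convexity gives $\|y+z\|_E\ge\|y\|_E+(g,z)$ for a subgradient $g$, so choosing signs with $(g,A\bar\varepsilon)\le0$ yields no upper bound. Your Euclidean fallback does work: the identity $|y+z|^2=|y|^2+2(y,z)+|z|^2$, a choice between $\pm\bar\varepsilon$, and norm equivalence give $V_t(0)\le C\sqrt{T-t}$. Combined with the $1$-Lipschitz property and dynamic programming, this yields $\|x\|_E\le u^\varepsilon(x,s)\le\|x\|_E+C\sqrt{1-s+\varepsilon^2}$ and the time-equicontinuity you want, for any $E$.
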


\begin{theorem}\label{t:extinction time asymptotics}
For any centrally symmetric convex body $E$, if $n \geq m-1$, we have
\begin{align*}
  \frac{1}{\,l^2(E)} \leq 2\tau_n(E) \leq \frac{2}{  C^2(E) M^2_m }.
\end{align*}
 where
\[
C(E)  := 1/\max_i \| e_i\|^*_E,
\]
 $M_m$ is the expectation of the largest entry, and $ l(E)$ is the expected $\| \cdot\|_E$ norm,  of the standard  Gaussian random vector  $ N(0,I_m)$.
Equivalently, 
\begin{align*}
  \frac{ 1}{\sqrt{2}} C(E) M_m \leq \lim \limits_{T\to \infty}\frac{\mathcal{K}_{n,T}(E)}{\sqrt{T}} \leq l(E).    
\end{align*}
    
\end{theorem}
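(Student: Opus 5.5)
The plan is to compare the level set solution of mean curvature flow with solutions of the linear backward heat equation, then translate the resulting bounds on $\tau_n(E)$ into the statement about $\mathcal{K}_{n,T}(E)$ via Theorem \ref{t:main theorem}. Let $u$ solve the terminal value problem
\begin{align*}
\partial_t u + \tr\big(\Pi(\nabla u)D^2u\big)=0,\quad u(x,T)=\|x\|_E .
\end{align*}
By homogeneity and the geometric character of the flow, $\{u(\cdot,t)\le \lambda\}$ is $\lambda E$ evolved by MCF for time $T-t$. Since the flow is parabolic and rescales as $E_t\mapsto \lambda E_{t/\lambda^2}$, we have $u(0,t)=\sqrt{(T-t)/\tau_n(E)}$: the origin is swallowed at time $T-t$ exactly by $\lambda E$ with $\lambda^2\tau_n(E)=T-t$. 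Thus $2\tau_n(E)$ is determined by the growth rate $u(0,t)^2/(T-t)$, and both bounds reduce to comparison principles for viscosity solutions (Giga--Goto--Ishii--Sato).

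\textbf{Upper bound on $\mathcal{K}$ (lower bound on $\tau$).} Let $v(x,t)=\E\|x+\sqrt{2(T-t)}\,G\|_E$ with $G\sim N(0,I_m)$. This is the solution of $\partial_t v+\Delta v=0$ with the same terminal data. Because $\|\cdot\|_E$ is convex, $v(\cdot,t)$ is convex, so $D^2v\ge 0$. Consequently
\begin{align*}
\tr(\Pi D^2v)\le \Delta v,
\end{align*}
and $v$ is a supersolution of the MCF level set equation in the backward sense: $\partial_t v+\tr(\Pi D^2 v)\le 0$. Comparison gives $u\le v$, so $u(0,t)\le \sqrt{2(T-t)}\,l(E)$. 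This yields $1/\tau_n\le 2l^2$, that is, $2\tau_n\ge 1/l^2(E)$.

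\textbf{Lower bound on $\mathcal{K}$ (upper bound on $\tau$).} Here I want a subsolution. Since $\|x\|_E\ge C(E)\|x\|_\infty$ (because $|x_i|=|(e_i,x)|\le \|e_i\|_E^*\|x\|_E$), comparison reduces the problem to $E'=$ the cube, with the factor $C(E)$. I would try $w(x,t)=\E\max_i\big(x_i+\sqrt{2(T-t)}\,G_i\big)$ or a $\beta$-softmax regularization, for which $D^2w$ has a rank-one-type concavity deficit along $\nabla w$, so that
\begin{align*}
\tr(\Pi D^2 w)\ge \Delta w - \big(D^2w\,\hat p,\hat p\big).
\end{align*}
The difficulty is that the normal term $(D^2 w\hat p,\hat p)$ is nonnegative, so the heat solution is not automatically a subsolution. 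I therefore expect this to be the main obstacle. My plan is to run the heat equation in time $T-t$ with a smaller diffusion: take $w=\E\max_i\big(x_i+\sqrt{c(T-t)}\,G_i\big)$ and show that the normal second derivative is at most half the Laplacian, losing the $\sqrt2$ factor that appears in the statement. Equivalently, a subsolution with $\sqrt{T-t}\,M_m$ gives $u(0,t)\ge \sqrt{T-t}\,C(E)M_m$, i.e. $2\tau_n\le 2/(C^2M_m^2)$.

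A cleaner alternative for this bound is to argue through the game itself. Paul plays $n\ge m-1$ orthonormal-ish columns spanning the tangent space of a level set of the $\max$ potential, and Carol's signs can only produce martingale increments. The expected potential then grows like the heat flow restricted to tangent directions, which is at least $\tfrac12$ of the full heat flow for the max potential. This gives the $\tfrac{1}{\sqrt2}C(E)M_m$ lower bound on $\lim \mathcal{K}_{n,T}/\sqrt T$ directly, and Theorem \ref{t:main theorem} converts it into the upper bound on $\tau_n$. The equivalence of the two displayed forms is immediate from Theorem \ref{t:main theorem}.
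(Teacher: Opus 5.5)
Your bound $\lim_T\mathcal{K}_{n,T}(E)/\sqrt T\le l(E)$ is correct and is the paper's argument up to time normalization. Convexity of $\E\|x+\sqrt{2(T-t)}G\|_E$ gives $\tr(\Pi D^2v)\le\Delta v$; at critical points the viscosity test uses $\min_\sigma\tr(\Pi(\sigma)D^2v)$, which is still $\le\Delta v$. Your choice of terminal data $C(E)\max_i x_i$ and your constants for the other bound are also right.

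The gap is that the other bound rests entirely on the inequality
\begin{align*}
  (D^2w\,\hat p,\hat p)\le \tfrac12\Delta w,\qquad \hat p=\nabla w/|\nabla w|,\qquad w(x,t)=\E\max_i\big(x_i+\sqrt{T-t}\,G_i\big),
\end{align*}
and you only announce it as something to show. The inequality you display, $\tr(\Pi D^2w)\ge\Delta w-(D^2w\,\hat p,\hat p)$, is an identity for every smooth $w$ with $\nabla w\ne0$, so the ``rank-one concavity deficit'' heuristic carries no information.

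The missing idea is the sign pattern of $D^2w$. The function $\max_i x_i$ is nondecreasing in each coordinate, submodular, and satisfies $g(x+s\mathbf 1)=g(x)+s$. All three properties survive convolution with the heat kernel, so:
\begin{itemize}
\item $\nabla w\ge0$ with $\sum_k\partial_kw=1$, hence $\nabla w\neq0$;
\item $\partial_{ij}w\le0$ for $i\ne j$;
\item $\sum_j\partial_{ij}w=0$ for every $i$.
\end{itemize}
In other words, $-D^2w$ is a Laplacian matrix. Since $p=\nabla w$ has nonnegative entries, the cross terms in $(D^2w\,p,p)$ are nonpositive, which gives $(D^2w\,\hat p,\hat p)\le\max_k\partial_{kk}w$. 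Writing $\Delta w=-\sum_\ell\sum_{j\ne\ell}\partial_{\ell j}w$ and separating the terms with $\ell=k$ or $j=k$ gives $\Delta w\ge2\partial_{kk}w$. Together these yield $\tr(\Pi(\nabla w)D^2w)\ge\frac12\Delta w$, hence $\partial_tw+\tr(\Pi D^2w)\ge\partial_tw+\frac12\Delta w=0$. So $C(E)w$ is a classical subsolution lying below $\|x\|_E$ at the terminal time. This structure is also why $\|x\|_\infty$ itself would fail as terminal data: it is not submodular.

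Your game-theoretic alternative does not avoid this step, and as phrased it argues in the wrong direction. Carol chooses signs adversarially, so the expected increment over random signs only upper-bounds what Paul can guarantee. Paul can secure the full tangential trace $\tr(\Pi D^2\phi\,\Pi)$ only by taking his columns to be an eigenbasis of $\Pi D^2\phi\,\Pi$, so that the cross terms $\varepsilon_i\varepsilon_j(D^2\phi\,a_i,a_j)$ vanish for every sign pattern. The claim that this trace is at least half of $\Delta\phi$ for the smoothed max is exactly the inequality above. On top of that, you would have to control the discrete Taylor remainders uniformly in $T$, which the comparison-principle route sidesteps.
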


\begin {remark} \label{rem:gaussian_constants} $l(E)$ is equal to the Gaussian width of the unit ball of the dual norm $\|\cdot\|_E^*$, and for the $\ell_\infty$ norm, the leading order term of this quantity is  $\sqrt {2 \log {m}}$  as $m$ gets large. When $E$ is the unit ball of any  $\ell_p$ norm, $C(E)$ is 1, and the leading order term of $M_m$ is  $\sqrt {2 \log {m}} $ \cite[\S 2.3.2]{Galambos87} (see also \cite{dasgupta14, kamath98, orabona} for nonasymptotic bounds).   
\end {remark}

The foregoing theorems are proved in Sections \ref{s:continuous limit} and \ref{s:analysis of the extinction time}.  The next section provides heuristics and overview of the proofs, while Sections \ref{s:the value function} and \ref{s:Banaszczyk's theorem and the limiting Bellman operator} as well as the Appendices set forth definition and results used in the proof of Theorem \ref{t:main theorem} in Section \ref{s:continuous limit}.

\section{Heuristics and overview of the proofs} \label{s:heuristics and overview of the proof}

The connection between Spencer's pusher-chooser game and mean curvature flow was first made clear in Kohn and Serfaty's work \cite{KohnSerfaty2006}. In their setting, Paul chooses an orthonormal frame of $m-1$ vectors. This already produces mean curvature flow in the limit. Here we show for the same continuum limit holds when the constraint is that the vectors lie in the Euclidean unit ball (and in particular one can take any $n\geq m$ vectors). Ultimately, this is made possible by an estimate of Banaszczyk for discrepancy with respect to the $\ell^2$ norm \cite{Banaszczyk1990}.

\textbf{Elementary strategies.}  \label{sec:elementary_strategies} To explain in rough terms the appearance of mean curvature, it helps to discuss some elementary strategies for Paul and Carol, which guarantee the following bounds at all $T$:
\begin{align}\label{e:intro value function bound}
 \sqrt{ c(m,n) T}  \leq K_{T}(m,n) \leq  \sqrt {\min(n,m)\, T}
\end{align}
where $c(m,n) := \min(n,m-1)/(\min(n,m-1)+1)$.

A possible strategy for Paul is to choose the columns of $a_{t,1},\dots, a_{t,n^*}$ to be $n^*:=\min(n,m-1)$ orthonormal vectors that are  orthogonal  to $y_{t-1}$ in a $n^*+1$ coordinate subspace of $\R^m$ that stays fixed during the game, e.g., $\text{span} \{e_1,\dots, e_{n^*+1}\}$, and set any remaining vectors to zero. This choice means that regardless of Carol's choice of signs,
\begin{align*}
  |\varepsilon_{t,1}a_{t,1}+\ldots+\varepsilon_{t,n}a_{t,n}|^2 = |\varepsilon_{t,1}a_{t,1}|^2+\ldots+|\varepsilon_{t,n}a_{t,n}|^2 = n^*.    
\end{align*}
(As a reminder, in our notation, $|\cdot |$ refers to the $\ell_2$ norm.) Therefore, Paul can make sure that $|y_t|^2 = |y_{t-1}|^2 + n^*$ at every step $t$, and thus
\begin{align*}
  |y_t|^2 = t \, n^*,\;\forall\;t.    
\end{align*}
Since $|y_t| \leq \sqrt{n^*+1}\,\|y_t\|_\infty$, one concludes that 
\begin{align*}
 \|y_t\|_\infty \geq \sqrt{c(m,n)t}, \;\forall\;t,\;c(m,n) = n^*/(n^*+1).
\end{align*}
Thanks to Banaszczyk's theorem (see Theorem \ref{t:Banaszczyk s theorem}), whatever Paul chooses for the vectors $a_{t,1},\ldots,a_{t,n}$, there exists a choice of signs $\varepsilon_t$ for Carol such that 
\begin{align*}
|\varepsilon_{t,1}a_{t,1}+\ldots\varepsilon_{t,n}a_{t,n}|^2 \leq m.   
\end{align*}
On the other hand, when the signs $\eps_{i,t}$ are independently sampled uniformly at random, the expected value  $\E | y_t |^2 \leq n t $, and  by the monotonicity of the $\ell_p$  norm, $\E \| y_t \|_\infty \leq \sqrt {n t}$. Therefore, by  a probabilistic argument, there exists a deterministic policy  of choosing  $\eps_t$ so that
\begin{align*}
  |y_{t}|^2 \leq |y_{t-1}|^2 + \min(m,n), \Rightarrow \|y_t\|_\infty \leq \sqrt{t\min(m,n)}.
\end{align*}
These strategies guarantee the bounds in \eqref{e:intro value function bound}.

\textbf{Appearance of the mean curvature.} There is a sort of generalization of the above argument for a non Euclidean norm, and it serves to illustrate the emergence of the mean curvature in the  vector balancing context. Instead of the norm $\|\cdot\|_\infty$ let us consider a general  norm $\|\cdot\|$ whose unit ball $E$ is assumed to be smooth. 

Observe that $y \mapsto \|y\|^2$ is a $2$-homogeneous function, which is smooth  away from $0$, and in particular, the second derivatives of $\|\cdot\|$ are $0$-homogeneous and their third derivatives are negatively homogeneous. This means that if $y_{t-1}$ is very large we expect $\|\cdot\|^2$ to be close to its second Taylor expansion in say, a ball of radius $1$ centered at $y_{t-1}$.

Then let us assume $\|y_{t-1}\|$ is large (as will be the case when $t$ is large), and consider the unit  vector $\hat y_{t-1} :=y_{t-1} /\|y_{t-1}\|$ as well as the outer normal vector $n (\hat y_{t-1})$ at $\hat y_{t-1}$, which lies on $\partial E$ with unit dual norm $\|n (\hat y_{t-1})\|^* =1$. Using Euler's identity for 1-homogeneous functions, one can check that 
\begin{align*}
  \nabla \|\cdot\|^2 (y_{t-1}) = 2\|y_{t-1}\|n(\hat y_{t-1}),\;D^2 \|\cdot\|^2 (y_{t-1}) = 2\text{II}_{\partial E}(\hat y_{t-1})+2n(\hat y_{t-1})\otimes n(\hat y_{t-1}),
\end{align*}
where $\text{II}_{\partial E}(y)$ denotes the second fundamental form of $\partial E$ at a point $y\in \partial E$. Then, we can use the second order Taylor expansion for $\|\cdot\|^2$ near $y_{t-1}$ to estimate $\|y_{t-1}+A\overline \varepsilon\|^2$, 
\begin{align*}
  \|y_t\|^2 & = \|y_{t-1}+A\overline\varepsilon\|^2 \\
    & = \|y_{t-1}\|^2 + 2\|y_{t-1}\|(n(\hat y_{t-1}),A\overline \varepsilon) + ( \text{II}_{\partial E}(\hat y_{t-1}) A\overline \varepsilon,A\overline \varepsilon) +(A\overline \varepsilon,n(\hat y_{t-1}))^2 + \textnormal{O}(\|y_{t-1}\|^{-1}).  
\end{align*}
Paul can always make sure the gradient term vanishes regardless of Carol's choice. Paul simply needs to choose all the columns of $A$ to be orthogonal to  $n(\hat y_{t-1})$. In this case we do not see the components of $D^2\|\cdot\|^2(y_{t-1})$ in the directions parallel to $n(\hat y_{t-1})$, and we get
\begin{align*}
  \|y_t\|^2 = \|y_{t-1}\|^2 + ( \text{II}_{\partial E}(\hat y_{t-1}) A\overline \varepsilon,A\overline \varepsilon) + \textnormal{O}(\|y_{t-1}\|^{-1}).  
\end{align*}
Now, thanks to Banaszczyk's theorem the second term in the right hand side can be understood and even written in simple terms (this is explained later in Remark \ref{r:Banaszczyk's theorem restated}). In particular, if $n \geq m-1$ it can be shown that
\begin{align*}
  \max_{A_t} \min_{\varepsilon_t} \|y_{t-1}+A_t\bar \varepsilon_t\|^2 = \|y_{t-1}\|^2 + H(\hat y_{t-1}) + \textnormal{O}(\|y_{t-1}\|^{-1}),
\end{align*}
where $H(y)$ denotes the mean curvature of $\partial E$ at $y\in\partial E$. This computation suggests that for large values of $t$ the evolution of $\|y_t\|^2$ is dominated by the values of the mean curvature of $\partial E$, and that, accordingly, the large $t$ asymptotics for $\|y_t\|$ might be governed by the mean curvature of $\partial E$. 

\subsection{Overview of the proofs} We will now give an overview of the proofs of Theorem \ref{t:main theorem} (which contains Theorem \ref{thm:main} as a special case) and of Theorem \ref{t:extinction time asymptotics} (which contains Theorem \ref{thm:asymptotics as m goes to infinity} as a special case). \\

\noindent \textbf{Overview of Theorem \ref{t:main theorem} on large $T$ asymptotics for $\mathcal{K}_{n,T}(E)$}. This is the more general counterpart to Theorem \ref{thm:main}. The proof centers on a value function $v(y,s)$ that encodes  $\mathcal{K}_{n,T}(E)$ for a general centrally symmetric convex body $E$. To facilitate the PDE analysis, we denote the steps $s$ by negative integers with the starting time $-T$ and the final time $0$. At the start, it will be convenient to consider the case where $E$ has a smooth ($C^2$) boundary, with the general case handled via a stability and approximation argument at the very end of the proof. As in Kohn and Serfaty \cite{KohnSerfaty2006}, we will show that this value function, properly rescaled, solves what amounts to a finite difference scheme for a nonlinear parabolic PDE. 
\begin{defn} \label{def:v}
For a centrally symmetric convex body $E$ in $\R^m$ with $m \geq 2$ and $n \geq 1$,  we define the value function $v:\mathbb{R}^m\times \mathbb{Z}_- \to \R_+$ by the relation
\begin{align*}
  v(y,s) := \max_{A_s}\min_{\overline \varepsilon_s}\ldots\max_{A_{-1}}\min_{\overline \varepsilon_{-1}}\|y+A_s\overline\varepsilon_s+\ldots+A_{-1}\overline\varepsilon_{-1}\|_E,\;  y\in \mathbb{R}^m,\;s=0,-1,\ldots,
\end{align*}
where for each $\tau=s,\ldots,-1$, the minimum is taken over all $\overline \varepsilon_\tau \in \{-1,1\}^n$ and the maximum is taken over all $m\times n$ matrices $A$ whose columns  have at most unit Euclidean length each. 
\end{defn}
Observe that  $\mathcal{K}_{n,T}(E) = v(0,-T)$. Accordingly, what we are after is the limit $v(0,-T)/\sqrt{T}$ as $T\to \infty$. We will compute this limit by showing there is a continuum limit for a family of rescalings of $v$. For $\delta>0$, we define $w_\delta:\mathbb{R}^m\times (\delta^2\mathbb{Z}_-)\to\mathbb{R}$ by 
\begin{align*}
w_\delta(x,t) = \delta v(x/\delta,t/\delta^2).
\end{align*}
Choosing $\delta = 1/\sqrt{T}$, we have
\begin{align*}
  \lim_{T\to \infty} \frac{v(0,-T)}{\sqrt{T}} = \lim\limits_{T\to\infty} w_{\frac{1}{\sqrt{T}}}(0,-1). 
\end{align*}
We will show  that $w_\delta$ is a solution to the (nonlinear) finite difference scheme
\begin{align}\label{e:intro finite difference scheme}
  \tfrac{1}{\delta^2}(w_\delta(x,t)-w_\delta(x,t-\delta^2))+\max_A\min_{\overline \varepsilon}\left \{ \delta^{-2}(w_\delta(x+\delta A\overline\varepsilon,t)-w_\delta(x,t)) \right \} = 0.
\end{align}
Thus, determining the large $T$ asymptotics for $\mathcal K_{n,T}(E)$ is reduced to determining whether the above finite difference scheme approximates a PDE in the $\delta \to 0$ limit. In other words, we must determine whether $w_\delta$ converges to the solution of a PDE as $\delta \to 0$. This entails, on one hand, obtaining estimates for $w_\delta(x,t)$ that are uniform in $\delta$, and, on the other hand, identifying the limiting PDE itself. The estimates are obtained first for $v$ (Section \ref{s:the value function}) by taking advantage of the smoothness of $E$ and the dynamic programming principle, and then transferred to $w_\delta$ by rescaling. 

We identify the limiting PDE in Section \ref{s:Banaszczyk's theorem and the limiting Bellman operator}.  First, we show that as $\delta \to 0$ the finite difference scheme captures only derivatives of order at most $2$, that is, for any fixed $C^2$ function $\phi$ we have
\begin{align*}
  \max_A\min_{\overline \varepsilon}\left \{\delta^{-2}(\phi(x+\delta A\overline\varepsilon)-\phi(x)) \right \} = \tfrac{1}{2}H_{\delta,n}(D^2\phi(x),D\phi(x))+o(1),
\end{align*}
as $\delta \to 0$. Here, $H_{\delta,n}$ is given by
\begin{align*}
  H_{\delta,n}(M,p) = \max_{A}\min_{\overline\varepsilon}\left \{ 2\delta^{-1}(A\overline\varepsilon,p)+(MA\overline\varepsilon,A\overline\varepsilon) \right \},
\end{align*}
the maximum being over $m\times n$ matrices $A$ whose columns all have $\ell^2$ norm at most $1$, and the minimum is over $\overline\varepsilon \in \{-1,1\}^n$. In Lemma \ref{l:liminf and limsup for H delta n}, we describe the behavior of $H_{\delta,n}(M,p)$ as $\delta \to 0$ using the language of half-relaxed limits. We show that when $p\neq 0$
\begin{align*}
  \lim \limits_{\delta \to 0}H_{\delta,n}(M,p) = \max_{A}\min_{\overline\varepsilon}\left \{(\Pi(p)M\Pi(p)A\overline\varepsilon,A\overline\varepsilon) \right \},   
\end{align*}
where $\Pi(p)$ is the $m\times m$ matrix representing the orthogonal projection onto the hyperplane orthogonal to $p$. (We leave the discussion of the case when $p=0$ to Section \ref{s:Banaszczyk's theorem and the limiting Bellman operator}.) This is where the convergence proof departs most from \cite{KohnSerfaty2006}, and this is due to the different set of admissible matrices $A$. We take advantage of Banaszczyk's theorem (Theorem \ref{t:Banaszczyk s theorem}) to characterize this last maxmin, obtaining (see  Lemma \ref{l:H n is sum of positive eigenvalues}) that (still for $p\neq 0$) 
\begin{align*}
  \max_{A}\min_{\overline\varepsilon}\left \{(\Pi(p)M\Pi(p)A\overline\varepsilon,A\overline\varepsilon) \right \} = \sum \limits_{i=0}^{k-1}\sigma_{m-i}((\Pi(p)M\Pi(p))_+) =: H_n(M,p),   
\end{align*}
where $k = \min(n,m-1)$ and  for a given symmetric matrix $M$ we use $M_+$ to denote the positive semi-definite part of $M$, and $\sigma_1(M),\ldots,\sigma_m(M)$ to denote the eigenvalues of $M$ ordered from smallest to largest (counting multiplicities). 

This argument enables one to identify what the limiting PDE should be, it is akin to checking the consistency of a numerical scheme, except here we are finding the PDE itself as we pass to the limit. In concrete terms, the above shows that if $\phi=\phi(x,t)$ is a smooth function (say twice differentiable in $x$, differentiable in $t$) then the limit
\begin{align*}
  \lim \limits_{\delta\to 0}\left \{ \tfrac{1}{\delta^2}(\phi(x,t)-\phi(x,t-\delta^2))+\max_A\min_{\overline \varepsilon}\left \{ \delta^{-2}(\phi(x+\delta A\overline\varepsilon,t)-\phi(x,t)) \right \} \right \},
\end{align*}
exists and is equal to (provided $\nabla \phi(x,t)\neq 0$)
\begin{align*}
  \partial_t \phi(x,t) + \tfrac{1}{2}H_n(D^2\phi,\nabla \phi)(x,t).
\end{align*}
The finite difference operator in the equation solved by $w_\delta$ converges to the above operator as $\delta \to 0$. As explained at the end of Section \ref{s:Banaszczyk's theorem and the limiting Bellman operator} this corresponds to the level set PDE for a curvature flow (and, in particular, to the mean curvature flow if $n\geq m-1$). 

Here is where the theory of viscosity solutions comes in (see Appendix \ref{a:basics of viscosity solutions} for definitions). This theory is used in Section \ref{s:continuous limit} where the proof of convergence for the value function is finalized for $E$ is smooth (Theorem \ref{t:continuum limit smooth E case}). Viscosity solutions are based on the comparison principle and the monotonicity of the differential operators involved; it is for this reason this notion of solution enjoys strong stability properties. In fact, the stability is strong enough to preserve being a solution to the finite difference equation \eqref{e:intro finite difference scheme} in the $\delta \to 0$ limit without requiring any a priori compactness of the $w_\delta$ (beyond boundedness). Loosely speaking, this is achieved by showing that every potential limit point of the sequence $w_\delta(x,t)$ lies above a viscosity supersolution and below a viscosity subsolution to the limiting PDE described earlier (see Lemma \ref{l:half relaxed limits are sub or super solutions}). The comparison principle says that the subsolution must lie below the supersolution, and therefore they must be one and the same function, guaranteeing the whole sequence $w_\delta$ converges to a limit (see Remark \ref{r:uniform convergence}).

The proof of Theorem \ref{t:main theorem} is completed in the last part of Section \ref{s:continuous limit} by first using Theorem \ref{t:continuum limit smooth E case} to verify the convergence of $\mathcal{K}_T(E)$ for smooth $E$. This requires relating the value of $w(0,-1)$ (for $w$ solving the limiting PDE) to the extinction time for the corresponding curvature flow starting from $E$. The general case is done via an approximation, making use of the fact that given a non-smooth $E$ we can find for any given $\epsilon>0$ a centrally symmetric smooth convex body $E_\epsilon$ such that 
\begin{align*}
  (1-\epsilon) E_\epsilon \subset E \subset (1+\epsilon)E_\epsilon.   
\end{align*}
From here, we are able to estimate $\mathcal{K}_T(E)$ in terms of $\mathcal{K}_T(E_\epsilon)$ in a uniform way in $T$, which allow us to conclude the proof for a general (centrally symmetric, convex) $E$, including most importantly, $E=[-1,1]^m$.\\

\noindent \textbf{Overview of Theorem \ref{t:extinction time asymptotics} on estimates for the extinction time}. This is the more general counterpart to Theorem \ref{thm:asymptotics as m goes to infinity}. In the discussion on Theorem \ref{t:main theorem}, we saw the relationship between $\tau_n(E)$ and $w(0,-1)$, so the two-sided bound for $\tau_n(E)$ are obtained by obtaining a respective two-sided bound for $w(0,-1)$. As $w$ solves a parabolic PDE, one may use the comparison principle (see Theorem \ref{t:comparison principle}) together with adequately built \emph{barriers} to estimate $w$ from above or below. For simplicity of exposition, we will only discuss here the plan of the proof for $E=[-1,1]^m$.

Let us review how the comparison principle is used. As we know, $w$ solves (in the viscosity sense)
\begin{align*}
  \partial_t w + \tfrac{1}{2}H_n(D^2w,\nabla w) = 0,\; \text{ in } \mathbb{R}^m\times \mathbb{R}_-.
\end{align*}
If a continuous function $\psi$ is such that $\psi(x,0)\leq w(x,0)$ and satisfies the differential inequality
\begin{align*}
  \partial_t \psi + \tfrac{1}{2}H_n(D^2\psi,\nabla \psi) \geq 0,\; \text{ in } \mathbb{R}^m\times \mathbb{R}_-,
\end{align*}
then for all $x$ and $t$ we have $\psi(x,t)\leq w(x,t)$. The same statement holds if we reverse the inequalities, i.e. if we have a continuous function $\overline \psi$ such that $\overline \psi(x,0) \geq w(x,0)$ and 
\begin{align*}
  \partial_t \overline \psi + \tfrac{1}{2}H_n(D^2\overline \psi,\nabla \overline \psi) \leq 0,\; \text{ in } \mathbb{R}^m\times \mathbb{R}_-.
\end{align*}
Then it follows that $\overline\psi(x,t)\geq w(x,t)$ for all $x,t$. In the PDE nomenclature  one says $\psi$ is a subsolution and $\overline \psi$ is a supersolution (see Appendix \ref{a:basics of viscosity solutions} for exact definitions). An auxiliary (often explicit) function that is used as a subsolution or a supersolution via the comparison principle is colloquially known as a \emph{barrier}.

In the proof of Theorem \ref{t:extinction time asymptotics}, we proceed by constructing functions $\psi,\overline \psi$ which are respectively a subsolution and a supersolution to the PDE solved by $w$. Then, we apply the comparison principle twice to conclude that $\psi\leq w\leq \overline \psi$ everywhere. This yields a two sided estimate for $w(0,-1)$ 
\begin{align*}
  \psi(0,-1) \leq w(0,-1) \leq \overline \psi(0,-1).
\end{align*}
The functions $\psi, \overline \psi$ we use turn out to be solutions to the heat equation (with different diffusivity constants, and different terminal values). In particular, the values $\psi(0,-1)$ and $\overline \psi(0,-1)$ correspond to, respectively, the expected largest component and the $\ell_\infty$ norm of Gaussians (of different variance), yielding the bounds in the statement of the Theorem.

It remains to explain how the heat equation is used to produce a subsolution $\psi$ and a supersolution $\overline \psi$ for the nonlinear PDE for $w$. First, as we intend $\psi$ to bound $w$ from below, we choose as its terminal value the function  $x\mapsto \max_i x_i$ which lies below $\|x\|_\infty$.  So, we define $\psi$ as the unique solution to
\begin{align*}
  \partial_t \psi + \tfrac{1}{4} \Delta \psi = 0 \text{ for } t<0, \;\psi(x,0) = \max_i x_i.
\end{align*}
The second derivatives of the function $\psi$ satisfy the following for $t<0$,
\begin{align*}
 \partial_{ij}\psi \leq 0 \text{ whenever } i\neq j,\;  ~\text{and}~\sum_{j=1}^m \partial_{ij}\psi = 0 \text{ for every } i.    
\end{align*}
This means that $-D^2\psi$ is a Laplacian  matrix.\footnote{We are using here the convention that a Laplacian matrix is negative semi-definite for consistency with the standard Laplacian operator in $\mathbb{R}^m$. However, in parts of the literature the sign convention in the definition of the Laplacian matrix is the opposite to ours, making it a positive semi-definite matrix. \label{fn:laplacian}} With this, it is possible to show the function $\psi$ satisfies the bound (see Lemma \ref{l:submodularity infinity Laplacian bound})
\begin{align*}
  \partial_t \psi + \tfrac{1}{2}H_n(D^2\psi,\nabla \psi) \geq 0.
\end{align*}
We see that $\psi$ is a subsolution, as we wanted.

Now, we define $\overline \psi$ as the solution of (note the different diffusivity constant)
\begin{align*}
  \partial_t \overline \psi + \tfrac{1}{2}\Delta \overline \psi = 0\text{ for } t<0,\; \psi(x,0) = \|x\|_\infty.    
\end{align*}
Observe $\psi(x,0)$ is convex in $x$, thus a standard computation with the heat kernel shows that $D^2\overline \psi \geq 0$ for all $t<0$. In particular, whenever $\nabla \overline \psi \neq 0$ we have 
\begin{align*}
  H_n(D^2\overline \psi,\nabla \overline \psi) = \tr(\Pi(\nabla \overline \psi)D^2\overline \psi) \leq \tr(D^2\overline \psi) = \Delta \overline \psi,
\end{align*}
(recall we are in the case where $n\geq m-1$), and so
\begin{align*}
  \partial_t \overline \psi+\tfrac{1}{2}H_n(D^2\overline \psi,\nabla \overline \psi) \leq \partial_t \overline \psi + \frac{1}{2}\Delta \overline \psi = 0,
\end{align*}
and we see $\overline \psi$ is a supersolution for the PDE solved by $w$, as we wanted.

\section{The value function}\label{s:the value function}

In all that follows, we fix a dimension $m \geq 2$ and a batch size $n \geq 1$. Then, for any centrally symmetric convex body $E$ the iterative maxmin formulation the value function $v$ given by Definition \ref{def:v}  allows us to characterize it by a dynamic program:
\begin{align}\label{e:value function two player game}
  \left \{ \begin{array}{rl} v(y,s-1) = &  \max \limits_{A} \min \limits_{\bar \varepsilon} v(y+A\bar \varepsilon ,s) \;\;\;\text{ for } s\leq 0,\\
  v(y,0) = & \|y\|_E. \end{array} \right.
\end{align}
 The maximum is over the set $\mathcal A$ of all $m\times n$ matrices whose $n$ columns $a_1,\ldots,a_n$ are all such that $|a_i|\leq 1$, and the minimum is over $\bar \varepsilon \in \{-1,1\}^n$.

\begin{remark}
When $E= [-1,1]^m$, it is clear that 
\begin{align*}
  v(0,-1)  = \max \limits_{A} \min_{\bar \varepsilon} \|A\bar \varepsilon\|_\infty = K_1(m,n).
\end{align*} 
\end{remark}

\begin{remark}
The function $v$ characterizes the value of $\mathcal{K}_{n,T}(E)$, to be precise,
\begin{align*}
  \mathcal{K}_{n,T}(E) = v(0,-T).    
\end{align*}
Accordingly, the analysis of the long-time asymptotics of $\mathcal{K}_{n,T}(E)$ involves understanding $v$ as $s\to -\infty$.
\end{remark}

To start our analysis, we gather some basic properties of $v$.
\begin{proposition}
  Given any centrally symmetric convex body $E$ and $v$ as in \eqref{e:value function two player game}, we have
  \begin{enumerate}
  
  \item For every $s$ the function $v(\cdot,s)$ is $1$-Lipschitz in $y$ with respect to the $\|\cdot\|_E$ norm.

  \item The value function is monotone in $s$, namely
  \begin{align*}
    v(y,s-1) \geq v(y,s) \text{ for all } (y,s)\in\mathbb{R}^m\times \mathbb{Z}_-.
  \end{align*}

  \end{enumerate}
\end{proposition}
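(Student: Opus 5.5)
Both properties follow by backward induction on $s$, using the dynamic program \eqref{e:value function two player game}.

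\textbf{Lipschitz property.} At $s=0$ we have $v(y,0)=\|y\|_E$, which is $1$-Lipschitz in $\|\cdot\|_E$ by the triangle inequality. Suppose $v(\cdot,s)$ is $1$-Lipschitz. Fix $y,y'$. For each $A\in\mathcal A$ and each $\bar\varepsilon$,
\begin{align*}
v(y+A\bar\varepsilon,s)\le v(y'+A\bar\varepsilon,s)+\|y-y'\|_E .
\end{align*}
Taking the minimum over $\bar\varepsilon$ on both sides preserves this inequality, since the additive constant does not depend on $\bar\varepsilon$. Taking the maximum over $A$ then preserves it as well. This gives $v(y,s-1)\le v(y',s-1)+\|y-y'\|_E$, and exchanging $y$ and $y'$ gives the bound in the other direction. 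The only point to check is that the max and min are attained, or at least finite. The set $\mathcal A$ is compact, the set of signs is finite, and $v(\cdot,s)$ is continuous by the induction hypothesis, so this holds. In any case, sup/inf of functions shifted by a constant behave the same way.

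\textbf{Monotonicity in $s$.} I would first show $v(y,-1)\ge v(y,0)$ for all $y$. The key step is to choose a particular matrix for Paul: take $A$ with first column $a_1=a$ and all other columns zero. Then the minimum over $\bar\varepsilon$ equals $\min(\|y+a\|_E,\|y-a\|_E)$. By convexity and central symmetry of the norm,
\begin{align*}
\tfrac12\big(\|y+a\|_E+\|y-a\|_E\big)\ge \|y\|_E .
\end{align*}
This alone does not force the minimum to be at least $\|y\|_E$. So $a$ must be chosen as a direction tangent to the level set. Let $p$ be a supporting functional of $E$ at $y/\|y\|_E$, so that $(p,y)=\|y\|_E$ and $\|p\|_E^*=1$, and pick a unit vector $a$ with $(p,a)=0$; this is possible because $m\ge 2$. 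Then
\begin{align*}
\|y\pm a\|_E\ge (p,y\pm a)=\|y\|_E .
\end{align*}
If $y=0$, the bound $v(0,-1)\ge 0=v(0,0)$ is trivial. Hence $v(y,-1)\ge v(y,0)$.

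Next I would propagate this by induction. Assume $v(\cdot,s)\ge v(\cdot,s+1)$ pointwise. Applying the monotone operator $\phi\mapsto\max_A\min_{\bar\varepsilon}\phi(\cdot+A\bar\varepsilon)$ to both sides preserves the inequality, and yields $v(\cdot,s-1)\ge v(\cdot,s)$.

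I expect the only step with real content to be the base case of monotonicity. It needs a zero-gradient (tangent) direction so that neither sign can decrease the norm, and that is exactly where $m\ge 2$ is used.
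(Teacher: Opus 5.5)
Your proposal is correct. The Lipschitz part is the same induction the paper uses. For monotonicity, however, you take a much longer route than needed. The paper just observes that the zero matrix is admissible, so for every $s$ directly
\begin{align*}
v(y,s-1)=\max_A\min_{\bar\varepsilon}v(y+A\bar\varepsilon,s)\;\ge\;\min_{\bar\varepsilon}v(y+0\cdot\bar\varepsilon,s)=v(y,s).
\end{align*}
This needs no base case, no induction, no supporting functional and no use of $m\ge 2$. Your closing remark is therefore mistaken: the base case does not need a tangent direction, and $m\ge 2$ plays no role here, since the statement holds for $m=1$ as well.

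What your route buys is robustness. The tangent-direction base case, with all columns placed in the hyperplane $\{(p,\cdot)=0\}$, combined with your monotone-operator induction, still works if Paul were forbidden from playing zero or short vectors. Examples are unit columns, or the orthonormal frames in Kohn--Serfaty's game. It is also the discrete germ of the heuristic in Section~\ref{s:heuristics and overview of the proof}, where Paul kills the gradient term by choosing columns orthogonal to the normal. For the game as defined in this paper, though, the one-line argument with $A=0$ suffices.
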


\begin{proof}
  We argue by induction on the non-negative integer $-s$. For the first claim, we note that the fact that the function $\|y\|_E$ is $1$-Lipschitz with respect to $\|\cdot\|_E$ follows from the triangle inequality for the norm $\|\cdot\|_E$. This proves the claim for $s=0$. 
  
  Now suppose that for some $s<0$ we know that $v(y,s)$ is $1$-Lipschitz continuous in $y$, that is
  \begin{align*}
    v(y,s) \leq v(y+h,s)+ \|h\|_E  \text{ for all } y,h\in\mathbb{R}^m.
  \end{align*}
  Then, in particular, for any matrix $A$, any $\bar \varepsilon \in \{-1,1\}^n$, and $y,h \in \mathbb{R}^m$ we have
  \begin{align*}
    v(y+A\bar \varepsilon,s) \leq v(y+A\bar \varepsilon+h,s)+ \|h\|_E  \text{ for all } y,h\in\mathbb{R}^m.
  \end{align*}
  If we fix one such $A$ and consider the minimum in $\bar \varepsilon$ we have
  \begin{align*}
    \min_{\bar \varepsilon} \{ v(y+A\bar \varepsilon,s)\} \leq \min_{\bar \varepsilon} \{ v(y+A\bar \varepsilon+h,s) \} + \|h\|_E  \text{ for all } y,h\in\mathbb{R}^m.
  \end{align*}
  Then, taking the maximum of the result over all $A$
  \begin{align*}
    \max_A \min_{\bar \varepsilon} \{ v(y+A\bar \varepsilon,s)\} \leq \max_A \min_{\bar \varepsilon} \{ v(y+A\bar \varepsilon+h,s) \} + \|h\|_E  \text{ for all } y,h\in\mathbb{R}^m.
  \end{align*}
  In light of the construction of the function $v$, this last inequality can be rewritten as 
  \begin{align*}
    v(y,s-1) \leq v(y+h,s-1) + \|h\|_E \text{ for all } y,h\in\mathbb{R}^m.
  \end{align*}
  We conclude that $v(\cdot,s-1)$ is also $1$-Lipschitz with respect to $\|\cdot\|_E$, and the first claim is proved.

  As for the second claim, simply note that for each $s$, the matrix $A = 0$ is admissible, so $v(y,s-1) \geq v(y,s)$.

\end{proof}

The following pointwise bound for $v(y,s)$ will be important, it follows via a comparison argument.

\begin{lemma}\label{lemma:bound for the value function}
  If $E$ has a $C^2$ boundary, there exists a constant $C \equiv C(E,n)$ such that
  \begin{align*}
    v(y,s) \leq \sqrt{ \|y\|_E^2 + C|s|}\; \textnormal{ for all } (y,s) \in \mathbb{R}^m\times \mathbb{Z}_-.
  \end{align*}

\end{lemma}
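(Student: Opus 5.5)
We argue by comparison with the explicit barrier $\Phi(y,s) := \sqrt{\|y\|_E^2 + C|s|}$. The goal is to show that $\Phi$ is a supersolution of the dynamic program \eqref{e:value function two player game}:
\begin{align*}
  \max_A \min_{\bar\varepsilon} \Phi(y+A\bar\varepsilon, s) \leq \Phi(y,s-1) \quad\text{for all } y \text{ and } s\leq 0.
\end{align*}
Since $\Phi(y,0)=\|y\|_E = v(y,0)$, induction on $-s$ then gives $v\leq \Phi$. Indeed, if $v(\cdot,s)\leq \Phi(\cdot,s)$, then
\[
v(y,s-1)=\max_A\min_{\bar\varepsilon} v(y+A\bar\varepsilon,s)\leq \max_A\min_{\bar\varepsilon}\Phi(y+A\bar\varepsilon,s)\leq \Phi(y,s-1),
\]
because the max-min is monotone in the function it acts on. Squaring, the one-step inequality is equivalent to
\begin{align*}
  \max_A\min_{\bar\varepsilon} \|y+A\bar\varepsilon\|_E^2 \leq \|y\|_E^2 + C,
\end{align*}
so it suffices to show that Carol can always keep the increment of $N(y):=\|y\|_E^2$ below a constant $C$ that is independent of $y$.

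\textbf{Regime $\|y\|_E\leq R$.} Here the bound is trivial. Since $|A\bar\varepsilon|\leq n$, we have $\|A\bar\varepsilon\|_E\leq c_E n$, where $c_E$ is the constant comparing $\|\cdot\|_E$ with $|\cdot|$. Hence $N(y+A\bar\varepsilon)\leq (R+c_En)^2$ for every choice of signs.

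\textbf{Regime $\|y\|_E$ large.} Here we use the smoothness of $\partial E$. The function $N$ is $2$-homogeneous and $C^2$ away from $0$, because $\partial E$ is $C^2$; hence $D^2N$ is $0$-homogeneous and bounded on $\{y\neq 0\}$, say $\|D^2N\|\leq \Lambda$. Take $\|y\|_E\geq R$ with $R$ large enough (depending on $E$ and $n$) that the segment from $y$ to $y+A\bar\varepsilon$ avoids a neighborhood of the origin. Taylor's theorem with integral remainder then gives
\begin{align*}
  N(y+A\bar\varepsilon) \leq N(y) + (\nabla N(y), A\bar\varepsilon) + \tfrac{\Lambda}{2}|A\bar\varepsilon|^2.
\end{align*}
Carol has to control the linear term and the quadratic term at the same time, and this is the main obstacle. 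The easiest route is averaging. If $\bar\varepsilon$ is uniformly random, the linear term has mean zero and $\mathbb{E}|A\bar\varepsilon|^2=\sum_i|a_i|^2\leq n$, so $\mathbb{E}\,N(y+A\bar\varepsilon)\leq N(y)+\Lambda n/2$. Some sign vector must therefore do at least as well as the average:
\begin{align*}
  \min_{\bar\varepsilon}N(y+A\bar\varepsilon)\leq N(y)+\Lambda n/2.
\end{align*}
This avoids needing Banaszczyk's theorem; the price is that $C$ depends on $n$, which is allowed here since $C=C(E,n)$. The one point to check is that the Hessian bound holds along the whole segment from $y$ to $y+A\bar\varepsilon$. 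This follows from $0$-homogeneity once $R>2c_En$, because then the segment stays outside $\tfrac{R}{2}E$.

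Finally, set $C=\max\{(R+c_En)^2,\ \Lambda n/2\}$, so that the squared inequality holds in both regimes. The induction described at the start then completes the proof.
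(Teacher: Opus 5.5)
Your proof is correct and follows the paper's route: the same barrier $\sqrt{\|y\|_E^2+C|s|}$, the same reduction to the one-step bound $\max_A\min_{\bar\varepsilon}\|y+A\bar\varepsilon\|_E^2\leq \|y\|_E^2+C$, and a Hessian bound for the $2$-homogeneous function $\|\cdot\|_E^2$ once the linear term has been averaged away. The differences are minor: the paper averages over the antipodal pair $\pm\bar\varepsilon$ and treats $\|\cdot\|_E^2$ as globally $C^{1,1}$, obtaining $C=\tfrac12 L(E)n^2$; you instead average over all sign vectors, which removes the cross terms and gives a constant linear in $n$, and you handle the origin with a near/far split.
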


\begin{proof}
For a number $C>0$ to be determined below, we consider the function
\begin{align*}
  b(y,s) = \sqrt{\|y\|_E^2-Cs}\;\text{ defined for all } (y,s) \in \mathbb{R}^m\times \mathbb{Z}_-.
\end{align*}
The lemma will follow from the fact that $b$ is a supersolution for the discrete-in-time terminal value problem that characterizes $v$. Specifically, we will show that
\begin{align*}
  b(y,s) & \geq \max_{A}\min_{\bar \varepsilon} b(y+A\bar \varepsilon,s+1)\;\forall\;y\in\mathbb{R}^m, s<0,\\
  b(y,0) & = \|y\|_E\;\forall\;y\in\mathbb{R}^m
\end{align*}
The last equation is straightforward: regardless of the value of $C$, we have that $b(y,0) = \|y\|_E$. The first  inequality requires comparing $\|y+A\varepsilon\|^2_E$ to $\|y\|^2_E$ for every admissible $A$ and a corresponding $\bar \varepsilon$. To prove this inequality,  note that since $b(y,s)$ is always non-negative, it is sufficient to show that for all $y\in\mathbb{R}^m$ and all $s<0$,
\begin{align*}
  b(y,s)^2 & \geq \max_{A}\min_{\bar \varepsilon} b(y+A\bar \varepsilon,s+1)^2,
\end{align*}
or more explicitly, we only need to show that 
\begin{align*}
  \|y\|_E^2-Cs & \geq \max_{A}\min_{\bar \varepsilon} \left \{ \|y+A\overline \varepsilon\|_E^2-C(s+1) \right \}.
\end{align*}
This in turn is equivalent to
\begin{align*}
  \|y\|_E^2+C& \geq \max_{A}\min_{\bar \varepsilon} \left \{ \|y+A\overline \varepsilon\|_E^2 \right \}.
\end{align*}
To establish this last inequality, we make the following standard observation:
\begin{align*}
  \min_{\overline \varepsilon} \|y+A\overline \varepsilon\|^2_E \leq \min_{\overline \varepsilon} \left \{ \tfrac{1}{2}\|y+A\overline \varepsilon\|^2_E + \tfrac{1}{2}\|y-A\overline \varepsilon\|^2_E \right \}.
\end{align*}
This means that
\begin{align*}
  \max_A \min_{\overline \varepsilon} \|y+A\overline \varepsilon\|_E^2 \leq \max_A \min_{\overline \varepsilon} \left \{ \tfrac{1}{2}\|y+A\overline \varepsilon\|^2_E + \tfrac{1}{2}\|y-A\overline \varepsilon\|^2_E \right \}.
\end{align*}
Therefore, it suffices to show that there is $C>0$ large enough so that 
\begin{align*}
  \max_A \min_{\overline \varepsilon} \left \{ \tfrac{1}{2}\|y+A\overline \varepsilon\|^2_E + \tfrac{1}{2}\|y-A\overline \varepsilon\|^2_E \right \} \leq \|y\|_E^2+C,\;\forall\;y \in \mathbb{R}^m.
\end{align*}
Rearranging this expression, we get
\begin{align*}
  \max_A \min_{\overline \varepsilon} \left \{ \tfrac{1}{2}\|y+A\overline \varepsilon\|^2_E + \tfrac{1}{2}\|y-A\overline \varepsilon\|^2_E - \|y\|_E^2\right \} \leq C,\;\forall\;y\in\mathbb{R}^m.
\end{align*}
Since the boundary of $E$ is $C^2$, the function $y\mapsto \|y\|_E^2$ is at least locally $C^{1,1}$ and homogeneous of degree $2$, and its Hessian exists a.e.  and is a $0$-homogeneous function in $\mathbb{R}^m$.  Therefore,  the operator norm of the Hessian is  bounded uniformly on $\R^m$ by some constant $L(E)>0$ a.e.
For the function  $g(z) :=\tfrac{1}{2}\|y+zA \bar \eps\|_E^2$, the  optimization objective above can be bounded as follows: 
 \[
 g(1)  +  g(-1) - 2g(0) = \int_0^1 g' (z) - g'(z-1) dz \leq \| g''(z)\|_{L_\infty([-1,1])}  \leq \frac{1}{2} L(E) \|A\bar \eps\|^2_{2}  \leq C(E,n)
 \]
 where $C(E,n) := \frac{1}{2} L(E) n^2$.
\end{proof}
    
\begin{remark}
  We will not use the exact value $C(E, n)$ in our  estimate of $v$. Instead,   in our proofs, we will only  use the fact that $C(E,n)$ is a finite number to get uniform pointwise bounds on $v$ as we proceed to the continuum limit.
\end{remark}

\section{Banaszczyk's theorem and the limiting Bellman operator}\label{s:Banaszczyk's theorem and the limiting Bellman operator}

We introduce a scaling parameter and proceed towards a continuum limit, following Kohn and Serfaty \cite{KohnSerfaty2006}. Working with a parabolic scaling with spatial scale $\delta$ we define for $x \in \mathbb{R}^m$ and $t \in \delta^2\mathbb{Z}_-$ 
\begin{align*}
  w_\delta(x,t) := \delta v(\delta^{-1}x,\delta^{-2}t).
\end{align*}
The value function $v(y,s)$ solves \eqref{e:value function two player game}, so $w_\delta$ will solve a rescaled version. Observe that 
\begin{align*}
  w_\delta(x,t-\delta^2) & = \delta v(\delta^{-1}x,\delta^{-2}(t-\delta^2))\\
  & = \max \limits_A \min \limits_{\bar \varepsilon} \delta v(\delta^{-1}x+A\bar\varepsilon,\delta^{-2}(t-\delta^2)+1) \\
  & = \max \limits_A \min \limits_{\bar \varepsilon} \delta v(\delta^{-1}(x+\delta A\overline \varepsilon),\delta^{-2}t).  
\end{align*}
Moreover, for every $x$ we have $w_\delta(x,0) = \delta \|x/\delta\|_E = \|x\|_E$. The above shows that $w_\delta$ solves
\begin{align}
  w_\delta(x,t-\delta^2) = & \max \limits_A \min \limits_{\bar \varepsilon} w_\delta(x+\delta A\bar \varepsilon,t)\;\;\;\text{ for } t \in \delta^2\mathbb{Z}_-, \label{e:rescaled Bellman equation}\\
  w_\delta(x,0) = & \|x\|_E. \label{e:rescaled terminal value}
\end{align}
The following theorem of Banaszczyk \cite[Theorem 1]{Banaszczyk1990} will be key for this section; it allows us to characterize the Bellman equation that arises in the limit $\delta \to 0$.

\begin{theorem}[Banaszczyk's theorem] \label{t:Banaszczyk s theorem} Let $D$ be an ellipsoid in $\R^k$ with center at 0 and principal semi-axes $\lambda_1, \dots, \lambda_k$. Then, for any finite set of vectors $z_1, \dots, z_n \in D$ we can find signs $\bar \eps \in \{\pm 1\}^n$ such that
\begin{align*}
 | \varepsilon_1z_1+\ldots+\varepsilon_n z_n |^2 \leq  \sum_{j=1}^k \lambda_j^2.
\end{align*}
\end{theorem}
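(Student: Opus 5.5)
The plan is to prove Theorem \ref{t:Banaszczyk s theorem} in two stages: a linear-algebraic reduction to at most $k$ "active" vectors, followed by a rounding step that uses the geometry of the ellipsoid. Write $D=\{x : (Q^{-1}x,x)\le 1\}$ with $Q=\mathrm{diag}(\lambda_1^2,\ldots,\lambda_k^2)$ in suitable coordinates, so the target bound is $\tr Q$. We may assume $\lambda_j>0$ by passing to the subspace spanned by the axes, and we work throughout with fractional colorings $x\in[-1,1]^n$ and the error vector $u=\sum_i(\varepsilon_i-x_i)z_i$.

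\textbf{Reduction step.} Start from $x=0\in[-1,1]^n$. While the set $F=\{i : |x_i|<1\}$ indexes linearly dependent vectors $\{z_i\}_{i\in F}$, pick a nonzero $c$ supported on $F$ with $\sum_{i\in F}c_iz_i=0$. Move $x\mapsto x+tc$ until some new coordinate reaches $\pm1$. This keeps $\sum_i x_iz_i=0$ and strictly decreases $|F|$. At termination, $\{z_i\}_{i\in F}$ is linearly independent, so $|F|\le k$. Setting $\varepsilon_i=x_i$ for $i\notin F$, we get
\begin{align*}
\sum_{i=1}^n\varepsilon_iz_i=\sum_{i\in F}(\varepsilon_i-x_i)z_i .
\end{align*}
So it suffices to prove the following rounding lemma: for linearly independent $z_1,\ldots,z_p\in D$ with $p\le k$ and any $x\in[-1,1]^p$, there are signs with $|\sum_i(\varepsilon_i-x_i)z_i|^2\le\tr Q$.

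\textbf{Rounding step.} This is the main obstacle. Independent randomized rounding, with $\mathbb{P}(\varepsilon_i=1)=(1+x_i)/2$, gives expectation $\sum_i(1-x_i^2)|z_i|^2$. This fails for elongated ellipsoids: two independent vectors near the long axis have $|z_1|^2+|z_2|^2\approx2\lambda_1^2>\lambda_1^2+\lambda_2^2$, so correlated signs are needed. I would argue by induction on the dimension $k$, peeling off the shortest axis $\lambda_k$. Project the vectors onto $e_k^\perp$; the projections lie in the ellipsoid $D'$ with semi-axes $\lambda_1,\ldots,\lambda_{k-1}$. Rerun the reduction inside $e_k^\perp$ to fix all but at most $k-1$ coordinates with zero projected error. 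Apply the induction hypothesis there to bound the $e_k^\perp$-component of the error by $\sum_{j<k}\lambda_j^2$. Finally, control the $e_k$-component by at most $\lambda_k^2$, using the freedom left in the one coordinate freed by the dimension drop. The quantity $(u,e_k)$ is a signed sum of the numbers $(z_i,e_k)$, each of modulus at most $\lambda_k$. A one-dimensional version of the same float-and-round argument (Spencer's online/greedy bound, or simply the fact that in one dimension a single fractional coordinate can be rounded with error at most $\lambda_k$) should give $|(u,e_k)|\le\lambda_k$.

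The delicate point is making the two stages compatible. The rounding in $e_k^\perp$ must not destroy the control in the $e_k$ direction, so I would set up the induction on a single fractional vector $x$ by alternating: first zero the $e_k$-component via a kernel move that frees one coordinate, then recurse. If this bookkeeping fails, the fallback is Banaszczyk's original argument. Choose $\varepsilon$ minimizing $|u|$ and use the local optimality conditions $\varepsilon_i(u,z_i)\le|z_i|^2$ for the ellipsoid-adapted functional $(Q^{-1}\cdot,\cdot)$ in place of $|\cdot|^2$. Then sum these conditions against the dual basis of $\{z_i\}$, using $(Q^{-1}z_i,z_i)\le1$ to turn the sum into $\tr Q$.
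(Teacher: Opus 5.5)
The paper does not prove this statement; it imports it from \cite[Theorem 1]{Banaszczyk1990}, so your argument has to stand on its own. Your reduction step is the standard Beck--Fiala float and is correct. Everything then rests on the rounding lemma, and the dimension induction you propose for it aims at an intermediate target that is false.

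Peeling off the axis $e_k$ means producing signs with $|(u,e_k)|\le\lambda_k$ and $|u-(u,e_k)e_k|^2\le\sum_{j<k}\lambda_j^2$ simultaneously. Take $k=2$, $\lambda_1=1>\lambda_2=1-\delta$, and $z_{1,2}=\bigl(1/\sqrt2,\pm(1-\delta)/\sqrt2\bigr)$. Both vectors lie on $\partial D$, they are independent, and $x=0$ is exactly what your reduction returns for $n=2$. The four signings give either $u=\pm(\sqrt2,0)$, which violates the $e_1$-budget ($2>\lambda_1^2$), or $u=\pm\bigl(0,\sqrt2(1-\delta)\bigr)$, which violates the $e_2$-budget ($\sqrt2\lambda_2>\lambda_2$). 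The same happens if you peel $e_1$ instead. The theorem itself holds here ($2\lambda_2^2\le\lambda_1^2+\lambda_2^2$) only because slack in one axis absorbs the excess in the other. So no axis-by-axis bookkeeping, tie-breaking rule, or use of a ``freed coordinate'' can repair this step. Moreover, the $(k-1)$-dimensional induction hypothesis is a bare existence statement and carries no information about the $e_k$-component of its output.

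The fallback does not close the gap either. Inserting the single-flip conditions $\varepsilon_i(u,z_i)\le|z_i|^2$ into $|u|^2=\sum_i(\varepsilon_i-x_i)(u,z_i)$ yields only $|u|^2\le\sum_i(1-\varepsilon_ix_i)|z_i|^2$. The $Q^{-1}$-version yields $(Q^{-1}u,u)\le\sum_i(1-\varepsilon_ix_i)$, i.e.\ $|u|^2\le 2p\lambda_1^2$. Both are of the same type as the independent-rounding bound you already rejected. For instance, with $z_1=(1,0)$, $z_2=(\cos\phi,\eta\sin\phi)$ and semi-axes $1,\eta$, the certificate is about $2>1+\eta^2$, although the optimum is tiny. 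You also do not say what identity the dual basis would supply.

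The missing idea, visible in the example above, is to control the second-moment \emph{matrix} of a random signing rather than a scalar. Let $W$ have columns $w_i=Q^{-1/2}z_i$, which lie in the unit ball. If a random $\bar\varepsilon$ satisfies $\mathbb{E}[(W\bar\varepsilon)(W\bar\varepsilon)^{\top}]\preceq I$, then $\mathbb{E}|\sum_i\varepsilon_iz_i|^2=\tr\bigl(Q\,\mathbb{E}[(W\bar\varepsilon)(W\bar\varepsilon)^{\top}]\bigr)\le\tr Q$, so some signing achieves the bound. In the example, taking the two bad signings with probability $1/2$ each gives exactly $\mathbb{E}[uu^{\top}]=Q$. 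By von Neumann's minimax theorem, this matrix inequality is equivalent to the conclusion holding for every ellipsoid with the given $w_i$, so some such device is unavoidable.

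The Gram--Schmidt walk started at $0$ \cite{BDGL19} provides it. Its mean-zero output satisfies $\mathrm{Cov}(W\bar\varepsilon)\preceq I$ by \cite{HSS24}, and no preliminary reduction is needed. Otherwise you must reproduce Banaszczyk's own argument from \cite{Banaszczyk1990}.
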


In the following remark we rephrase  this theorem in equivalent terms that will make its connection to the Bellman equation more apparent.

\begin{remark}\label{r:Banaszczyk's theorem restated}
 If $M \in \R^{m\times m}$ is   positive semi-definite and $k = \min (n,\text{rank}(M))$,  then
  \begin{align}\label{e:Banaszczyk in compact form}
     \max_{A} \min_{\overline \varepsilon} (MA\overline\varepsilon,A\overline\varepsilon) = \sum\limits_{j=0}^{k-1}\sigma_{m-j}(M),
  \end{align}  
  where the feasible sets of $A$ and $\overline \varepsilon$ are as usual.
  \end{remark}
  
  To see why this remark holds, consider an admissible matrix $A$ whose columns are denoted $a_1,\ldots,a_n$ and write $z_i = M^{1/2}a_i$ for each $i$. Then we observe that 
  \begin{align*}
    (MA\overline \varepsilon,A\overline \varepsilon) = (M(\varepsilon a_1+\ldots +\varepsilon_n a_n),\varepsilon a_1+\ldots +\varepsilon_n a_n) = |\varepsilon_1 z_1+\ldots+\varepsilon_n z_n|^2. 
  \end{align*} 
   If the $a_i$'s lie in some $k$-dimensional subspace $V$, and $Ma_i\neq0 $ for every $i$, then the $z_i$ will all lie in an ellipsoid with squares of principal semi axes adding up to no more than the partial trace $\tr_V(M)$. Then, Theorem \ref{t:Banaszczyk s theorem} guarantees that
  \begin{align*}
    \min_{\overline \varepsilon} (MA\overline \varepsilon, A\overline \varepsilon) \leq \tr_V(M).
  \end{align*} 
  We may now optimize over all such subspaces $k$, and conclude that 
   \begin{align*}
     \max_{A} \min_{\overline \varepsilon} (MA\overline \varepsilon, A\overline \varepsilon) \leq \max_{\dim(V)=k}\tr_V(M).   
   \end{align*}
   For a symmetric matrix $M$ the maximum value of $\tr_V(M)$ over all $k$-dimensional subspaces is the same as the sum of the largest $k$ eigenvalues of $M$ (each again value counted according to its multiplicity), so
   \begin{align*}
     \max_A \min_{\overline \varepsilon} (MA\overline \varepsilon,A\overline \varepsilon) \leq \sum\limits_{j=0}^{k-1} \sigma_{m-j}(M).
   \end{align*} 
   On the other hand, if $A_0$ is chosen with columns given by an eigen-basis for $M$ for the largest $k$ eigenvalues of $M$ (and zero for the remaining columns if $n>k$), we see that 
   \begin{align*}
     (MA_0\overline \varepsilon,A_0\overline \varepsilon) = \sum\limits_{j=0}^{k-1} \sigma_{m-j}(M),\;\forall\;\overline \varepsilon \in \{-1,1\}^n,
   \end{align*}
  and the identity in \eqref{e:Banaszczyk in compact form} now follows.

\subsection{The difference operators} Subtracting $w_\delta(x,t)$ from both sides of \eqref{e:rescaled Bellman equation} gives us 
\begin{align*}
    w_\delta(x,t)-w_\delta(x,t-\delta^2) +  \max \limits_A \min \limits_{\bar \varepsilon} \{ w_\delta(x+\delta A\bar \varepsilon,t)-w_\delta(x,t) \} = 0,
\end{align*}
for every $x\in\mathbb{R}^m,\;t\in \delta^2\mathbb{Z}_-$. We shall rewrite this equation in terms of the following finite difference operators, which act on functions $\phi(x,t)$ defined in $\mathbb{R}^m\times \delta^2\mathbb{Z}_-$. They are
\begin{align*}
  \partial^{-\delta^2}_t \phi(x,t) & := \frac{1}{\delta^2}(\phi(x,t)-\phi(x,t-\delta^2))  \\ 
  \mathcal{H}_{\delta,n}(\phi)(x,t) & := \max \limits_A \min \limits_{\bar \varepsilon}  \tfrac{2}{\delta^2}(\phi(x+\delta A\bar \varepsilon,t)-\phi(x,t) ) 
\end{align*}
 In terms of these operators, the equation for $w_\delta$ can be written more compactly,
\begin{align}\label{e:rescaled value function terminal value problem}
  \left \{ \begin{array}{rl} \partial^{-\delta^2}_t w_\delta(x,t)+ \tfrac{1}{2}\mathcal{H}_{\delta,n}(w_\delta)(x,t) & = 0 \textnormal{ in } \mathbb{R}^m \times \delta^2 \mathbb{Z}_-,\\
  w_\delta(x,0) & = \|x\|_E \text{ in } \mathbb{R}^m.\end{array}\right.
\end{align}
Evidently, as $\delta \to 0$ one expects the linear difference operator $\partial^{-\delta^2}_t$ to converge to the $\partial_t$ operator. Similarly, we expect the nonlinear operator $\mathcal{H}_{\delta,n}$ to converge to a nonlinear elliptic operator.  For a matrix $M\in\mathbb{R}^{m\times m}$ and $p\in\mathbb{R}^m$ we define $H_{\delta,n}(M,p)$ as follows:
\begin{align}\label{e:definition of the auxiliary operator H delta n}
  H_{\delta,n}(M,p) := \max_A\min_{\overline\varepsilon}\left \{ 2\delta^{-1}(p,A\overline\varepsilon) + (M A\overline\varepsilon,A\overline\varepsilon) \right \}.
\end{align}

\begin{proposition}\label{p:consistency of discretization}
  Consider a function $\phi:\mathbb{R}^m\times (-\infty,0)\to\mathbb{R}$ such that $\partial_t\phi$ and $D^2\phi$ are continuous with modulus of continuity $\rho(\cdot)$, then the following estimates hold for all $(x,t)$
  \begin{align*}
    |\partial^{-\delta^2}_t\phi(x,t)-\partial_t \phi(x,t)| \leq \rho(\delta^2), \;\;|\mathcal{H}_{\delta,n}(\phi)(x)-H_{\delta,n}(D^2\phi(x),\nabla \phi(x))| \leq 2\rho(\delta n)n^2.
  \end{align*}

\end{proposition}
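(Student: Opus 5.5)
The plan is to prove the two estimates separately: the time estimate by the mean value theorem, and the spatial estimate by a second-order Taylor expansion with integral remainder. The one structural fact needed for the spatial estimate is that $\max_A\min_{\bar\varepsilon}$ is $1$-Lipschitz in the sup norm over the index set. That is, if two families satisfy $|f(A,\bar\varepsilon)-g(A,\bar\varepsilon)|\leq \eta$ for all admissible $A$ and all $\bar\varepsilon$, then $|\max_A\min_{\bar\varepsilon} f-\max_A\min_{\bar\varepsilon} g|\leq \eta$. This follows from the same monotonicity argument used in the Lipschitz part of the Proposition in Section \ref{s:the value function}.

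For the time derivative, I write
\begin{align*}
\partial^{-\delta^2}_t\phi(x,t)-\partial_t\phi(x,t) = \frac{1}{\delta^2}\int_{t-\delta^2}^{t}\big(\partial_t\phi(x,s)-\partial_t\phi(x,t)\big)\,ds .
\end{align*}
Each integrand has absolute value at most $\rho(\delta^2)$, since $|s-t|\leq\delta^2$ and $\rho$ may be taken nondecreasing. Averaging over an interval of length $\delta^2$ then gives the first bound directly.

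For the spatial operator, fix an admissible $A$ and $\bar\varepsilon$, and set $h=\delta A\bar\varepsilon$. Taylor's formula with integral remainder gives
\begin{align*}
\phi(x+h,t)-\phi(x,t) = (\nabla\phi(x,t),h)+\tfrac12(D^2\phi(x,t)h,h) + \int_0^1(1-r)\big((D^2\phi(x+rh,t)-D^2\phi(x,t))h,h\big)\,dr .
\end{align*}
I then multiply by $2/\delta^2$. The first two terms become exactly $2\delta^{-1}(\nabla\phi,A\bar\varepsilon)+(D^2\phi A\bar\varepsilon,A\bar\varepsilon)$, which is the expression inside the definition \eqref{e:definition of the auxiliary operator H delta n} of $H_{\delta,n}$. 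For the remainder, I use $|h|\leq \delta\sum_i|a_i|\leq \delta n$, so the continuity modulus gives $\|D^2\phi(x+rh)-D^2\phi(x)\|\leq\rho(\delta n)$. The remainder after rescaling is then bounded by
\begin{align*}
\tfrac{2}{\delta^2}\cdot\tfrac12\,\rho(\delta n)\,|h|^2\leq \rho(\delta n)\,n^2 ,
\end{align*}
which is within the claimed $2\rho(\delta n)n^2$.

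This bound is uniform in $A$ and $\bar\varepsilon$, so applying the $\max\min$ Lipschitz fact from the first paragraph transfers it to $|\mathcal{H}_{\delta,n}(\phi)-H_{\delta,n}(D^2\phi,\nabla\phi)|$, which gives the second estimate. The main point needing care is this uniformity: the crude bound $|A\bar\varepsilon|\leq n$, valid for every admissible $A$ and every sign vector, is what allows the max-min to be exchanged with the error estimate. Apart from that, the argument is routine.
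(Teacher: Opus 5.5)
Your proof is correct and follows the same route as the paper: the integral form of the backward time difference, then a second-order Taylor expansion whose remainder is bounded uniformly in $A,\bar\varepsilon$ via $|A\bar\varepsilon|\leq n$, transferred through $\max_A\min_{\bar\varepsilon}$. Your $(1-r)$-weighted remainder gives the slightly sharper constant $\rho(\delta n)n^2$, and you make explicit the $1$-Lipschitz property of $\max_A\min_{\bar\varepsilon}$ that the paper uses implicitly.
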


The proof of this proposition is standard and it can be found in Appendix \ref{a:proof of some standard proposition}. The operator $H_{\delta,n}(M,p)$ captures how $\mathcal{H}_{\delta,n}$ is converging from a discrete (and thus, nonlocal) operator to a differential (local) operator as $\delta \to 0$. While Proposition \ref{p:consistency of discretization} deals with a fixed function $\phi$ (namely, independent of $\delta$) and not with the sequence $w_\delta$, we will be able to relate (in a weak sense) the limiting behavior of the two sequences 
\begin{align*}
  \left \{\mathcal{H}_{\delta}(w_\delta)(x,t)\right \}_{\delta>0} \text{ and } \{ H_{\delta,n}(D^2w_\delta,\nabla w_\delta)(x,t) \}_{\delta>0}.    
\end{align*}
In this way, the sequence $H_{\delta,n}$ captures all of the relevant information of the operator $\mathcal{H}_{\delta,n}$ as $\delta \to 0$. The fact that $H_{\delta,n}$ is a sequence of real valued functions in a finite dimensional space (that is the Cartesian product of the space of symmetric $m\times m$ matrices and $\mathbb{R}^m$) reflects the emergence of a nonlinear local operator of the form $\phi \mapsto H(D^2\phi,\nabla u)$ in the $\delta \to 0$ limit. 

To describe the limit of $H_{\delta,n}$ it will be useful to define a subset of our family of admissible matrices for each given $p\in\mathbb{R}^m$:
\begin{align}\label{e:definition of the set A of p}
  \mathcal{A}(p) := \{ A = (a_1 \;\ldots\;a_n) : |a_i|\leq 1 \textnormal{ and } (a_i,p)=0 \textnormal{ for all } i\} = \{ A \in \mathcal{A}\;:\; A^\top p = 0\}.
\end{align}
We then define $H_n:\text{Sym}(m)\times \mathbb{R}^m\to \mathbb{R}$ by
\begin{align}\label{e:definition of the operator Hn}
  H_n(M,p) & = \max_{A \in \mathcal{A}(p)} \min_{\overline \varepsilon \in \{\pm 1\}^n} (MA\overline\varepsilon,A\overline\varepsilon)
\end{align}
The function $H_n(M,p)$ is finite and well-defined for every $M$ and $p$  since the expression inside the $A\mapsto M(A\overline \varepsilon,A\overline\varepsilon)$ is continuous with respect to $A$ and $\mathcal{A}(p)$ is always a compact set. Moreover, $H_n(M,p)$ is continuous in the set $\{ (M,p): p \neq 0\}$.

On the other hand, $H_n$ cannot be extended to a continuous function for all $(M,p)$, i.e. there are discontinuities at some $(M,p)$'s with $p=0$. This is to be expected, as for $p\neq 0$ the function $p\mapsto H_n(M,p)$ depends on $p$ only on the direction of $p$. If the eigenvalues of $M$ are all different, a sequence $p_k$ with $p_k\to 0$ along several directions will produce values $H_n(M,p_k)$ that oscillate indefinitely between different values.

However, we can and must work with the upper and lower semi-continuous envelopes of $H_n$ given by
\begin{align*}
  (H_n)^*(M,p) := \limsup \limits_{M' \to M,p' \to p} H_n(M',p'),\\ (H_n)_*(M,p) := \liminf \limits_{M' \to M,p'\to p} H_n(M',p').
\end{align*}
These envelopes will be needed to define the notion of weak solution for the limiting PDE. This is a well known practice in the viscosity solutions literature, in particular works on mean curvature flows \cite{EvansSpruck1991,ChenGigaGoto1989}.

We already noted that $H_n$ is continuous when $p\neq 0$, in this case $H_n^*(M,p) = H_{n,*}(M,p)$ for $p\neq 0$. One can show that
\begin{align*}
  (H_n)^*(M,0) & = \max_A \min_{\overline \varepsilon}(MA\overline\varepsilon,A\overline\varepsilon),\\
  (H_n)_*(M,0)&  = \min_{\sigma \in \mathbb{S}^{m-1}} H_n(M,\sigma).
\end{align*}
We recall the maximum in the first line is over all $A$'s whose columns have $\ell^2$ norm at most $1$, or in the notation just introduced, $A \in \mathcal{A}(0)$. In particular, this shows that $(H_n)^*(M,0) = H_n(M,0)$ so while $H_n$ is not continuous for all $M,p$ it is always upper semi-continuous. We will establish this in the following proposition, where we do something stronger: we characterize the half-relaxed limits of $H_\delta(M,p)$ as $\delta \to 0$ (see Remark \ref{r:half relaxed limits H n delta} below for further details). 

\begin{lemma}\label{l:liminf and limsup for H delta n}
For any sequences
\begin{align*}
  M_k \to M,\; p_k \to p,\; \delta_k \to 0,    \end{align*}
  of symmetric matrices $M_k \in \R^{m\times m}$, vectors $p_k \in \R^m$ and scalars $\delta_k>0$,
we have the limit
\begin{align*}
  \lim \limits_{k\to \infty} H_{\delta_k,n}(M_k,p_k) = H_n(M,p),\;\text{ when } p \neq 0,
\end{align*}
as well as the bounds
\begin{align*}
  \liminf \limits_{k\to \infty}H_{\delta_k,n}(M_k,p_k) & \geq (H_n)_*(M,0)~~~ \text { and} ~~~
  \limsup \limits_{k\to \infty}H_{\delta_k,n}(M_k,p_k)  \leq (H_n)^*(M,0) \text{ when } p=0.
\end{align*}
Moreover, both bounds are sharp, i.e., for both bounds one can find sequences where the bounds hold with equality.

\end{lemma}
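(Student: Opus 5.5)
The plan is to handle the case $p\neq 0$ first, by sandwiching $H_{\delta_k,n}(M_k,p_k)$ between two quantities that both tend to $H_n(M,p)$. The case $p=0$ then follows from the same two estimates, and explicit sequences give sharpness.

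\emph{Lower bound for $p\neq0$.} Restricting the max to $A\in\mathcal{A}(p_k)$ kills the singular term $2\delta_k^{-1}(p_k,A\overline\varepsilon)$. This gives $H_{\delta_k,n}(M_k,p_k)\geq H_n(M_k,p_k)$, and continuity of $H_n$ on $\{p\neq 0\}$ yields $\liminf_k H_{\delta_k,n}(M_k,p_k)\geq H_n(M,p)$.

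\emph{Upper bound for $p\neq0$.} Let $A_k$ be a near-optimal matrix for $H_{\delta_k,n}(M_k,p_k)$. Carol first chooses signs on the components of the columns along $p_k$, and the point is that she can keep $(p_k,A_k\overline\varepsilon)\leq 0$ while still nearly minimizing the quadratic term.

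Write $a_i=b_i+\beta_i\hat p_k$ with $b_i\perp p_k$ and $\hat p_k=p_k/|p_k|$. For any $\overline\varepsilon$, the signs $\overline\varepsilon$ and $-\overline\varepsilon$ give the same quadratic term $(M_kA\overline\varepsilon,A\overline\varepsilon)$, while the linear term changes sign. So Carol can always choose among $\pm\overline\varepsilon$ to make $(p_k,A\overline\varepsilon)\leq 0$. Hence
\[
H_{\delta_k,n}(M_k,p_k)\leq \max_{A}\min_{\overline\varepsilon}(M_kA\overline\varepsilon,A\overline\varepsilon),
\]
which is uniformly bounded by $\|M_k\|n^2$. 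This bound is global but too weak: it gives $(H_n)^*(M,0)$, not $H_n(M,p)$.

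To get $H_n(M,p)$, I will use that a large linear term forces Carol's optimal choice. If the column components $\beta_i$ are not small, Carol can make the linear term very negative, of order $-\delta_k^{-1}|p_k|\,|\sum\varepsilon_i\beta_i|$. Precisely, for every $\overline\varepsilon$,
\[
H_{\delta_k,n}(M_k,p_k)\leq 2\delta_k^{-1}\min_{\pm}\bigl(\pm(p_k,A\overline\varepsilon)\bigr)+(M_kA\overline\varepsilon,A\overline\varepsilon)= -2\delta_k^{-1}|p_k||(\hat p_k,A\overline\varepsilon)|+Q.
\]
Choose $\overline\varepsilon$ to minimize the quadratic form of the projected frame, $(\Pi M_k\Pi B\overline\varepsilon,B\overline\varepsilon)$, where $B=(b_i)\in\mathcal{A}(p_k)$. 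Expanding $Q$ in terms of $B\overline\varepsilon$ and $s=(\hat p_k,A\overline\varepsilon)$ gives
\[
Q\leq(\Pi M_k\Pi B\overline\varepsilon,B\overline\varepsilon)+C|s|+C s^2 .
\]
Then
\[
H_{\delta_k,n}\leq H_n(M_k,p_k)+\sup_{s}\bigl(-2\delta_k^{-1}|p_k||s|+C|s|+Cs^2\bigr).
\]
Since $|s|\leq n$ and $|p_k|\geq |p|/2>0$, the supremum is $0$ once $\delta_k$ is small: $C|s|+Cs^2\leq C(1+n)|s|\leq 2\delta_k^{-1}|p_k||s|$. So $\limsup_k H_{\delta_k,n}(M_k,p_k)\leq\lim_k H_n(M_k,p_k)=H_n(M,p)$.

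\emph{The case $p=0$.} The upper bound comes from the $\pm\overline\varepsilon$ argument: $H_{\delta_k,n}\leq\max_A\min_{\overline\varepsilon}(M_kA\overline\varepsilon,A\overline\varepsilon)$, which is continuous in $M_k$ and tends to $(H_n)^*(M,0)=H_n(M,0)$. For the lower bound, if $p_k\neq0$ then $H_{\delta_k,n}\geq H_n(M_k,p_k)$, and $H_n(M_k,p_k)\geq \min_{\sigma\in\mathbb{S}^{m-1}}H_n(M_k,\sigma)$, which tends to $(H_n)_*(M,0)$; this last convergence needs continuity of the min over the compact sphere. If $p_k=0$, then $H_{\delta_k,n}=H_n(M_k,0)$, which is at least as large.

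\emph{Sharpness.} Taking $p_k=0$ attains the upper bound. For the lower bound, take $p_k=\delta_k^{1/2}\sigma^*$, or even $p_k=\sigma^*/k$ with $\delta_k\ll 1/k$, where $\sigma^*$ is a minimizer of $H_n(M,\cdot)$ on the sphere. The upper-bound argument above still applies as long as $\delta_k^{-1}|p_k|\to\infty$, and gives the limit $H_n(M,\sigma^*)=(H_n)_*(M,0)$.

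The main obstacle is the upper bound for $p\neq0$, specifically controlling the cross terms in $Q$ uniformly.
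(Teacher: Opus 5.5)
Your proof is correct. Its skeleton matches the paper's: the lower bound comes from restricting to $\mathcal{A}(p_k)$, the $p=0$ upper bound from the $\overline\varepsilon\leftrightarrow-\overline\varepsilon$ symmetry, and sharpness from the same sequences $p_k=0$ and $p_k=\delta_k^{1/2}\sigma_0$.

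Where you differ is the key upper bound for $p\neq 0$.

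\textbf{The paper's route.} It analyzes Paul's maximizer. Since $A=0$ is admissible, the value is nonnegative. So an optimal $A$ must satisfy $2\delta^{-1}|(A^\top p,\overline\varepsilon)|\le (MA\overline\varepsilon,A\overline\varepsilon)$ for every $\overline\varepsilon$, which gives $\|A^\top p\|_1\le \tfrac12\delta C\|M\|$. Hence the optimal frame lies within $O(\delta/|p|)$ of $\mathcal{A}(p)$. Replacing it by $\Pi(p)A$ then gives $H_{\delta_k,n}(M_k,p_k)\le H_n(M_k,p_k)+O(\delta_k)$.

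\textbf{Your route.} You exhibit a reply for Carol against an \emph{arbitrary} $A$. She plays an optimal sign vector for the projected frame $\Pi(p)A$, with the global sign chosen so that the linear term equals $-2\delta^{-1}|p||s|$. You then absorb the $O(|s|)$ cross and normal terms into that negative term, using $|s|\le n$.

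\textbf{Comparison.} Your argument needs no reasoning about maximizers or about nonnegativity of the value. It also gives a sharper conclusion: the exact identity $H_{\delta,n}(M,p)=H_n(M,p)$ holds once $\delta^{-1}|p|$ exceeds a threshold depending only on $n$ and $\|M\|$. The paper's route instead yields structural information: near-optimal frames are almost tangent to the level set, which fits the heuristic picture in the overview. In both proofs, sharpness of the liminf bound uses only $\delta_k/|p_k|\to 0$.

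You flag the continuity of $M\mapsto\min_{\sigma}H_n(M,\sigma)$, which the paper also uses tacitly. It follows from $|H_n(M,\sigma)-H_n(M',\sigma)|\le n^2\|M-M'\|$, which holds uniformly in $\sigma$ because $|A\overline\varepsilon|\le n$ for every admissible $A$.
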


\begin{remark}\label{r:half relaxed limits H n delta}
  Another way of stating Lemma \ref{l:liminf and limsup for H delta n} is in terms of the half-relaxed limits of $H_{\delta,n}$ as $\delta \to 0$. In particular, the lemma says that when $p =0$,
  \begin{align*}
    (H_n)^*(M,p) & = \limsup_{M'\to M, p'\to p,\delta \to 0} H_{\delta,n}(M',p') = \max_A \min_{\overline \varepsilon}(MA\overline\varepsilon,A\overline\varepsilon),\\
    (H_n)_*(M,p) & = \liminf_{M'\to M, p'\to p,\delta \to 0} H_{\delta,n}(M',p') = \min_{\sigma \in \mathbb{S}^{m-1}} H_n(M,\sigma), 
  \end{align*}
  and furthermore, $(H_n)^*(M,p) = (H_n)_*(M,p) = H_n(M,p)$ whenever $p \neq 0$. 

\end{remark}

\begin{proof} We start by fixing an arbitrary $\delta>0$ and recording two observations about $H_{\delta,n}$. First, it is clear that restricting the maximization in $A$ to just those $A$'s with $A\in \mathcal{A}(p)$ cannot increase the maximum, and so we have for any symmetric $M$ and vector $p$,
\begin{align}\label{e:H delta n basic lower pointwise bound}
  H_{\delta,n}(M,p) \geq \max_{A\in\mathcal{A}(p)}\min_{\overline\varepsilon}(MA\overline\varepsilon,A\overline\varepsilon) = H_n(M,p).    
\end{align}
Here, we have made use of the fact that $A^\top p = 0$ whenever $A \in \mathcal{A}(p)$. 

The second observation concerns $H_{\delta,n}$ when $p\neq 0$, we are going to show that given any symmetric $M$ and a vector $p\neq 0$, if $A$ achieves the maximum in \eqref{e:definition of the auxiliary operator H delta n} we have 
\begin{align}\label{e:H delta n optimal A bound for non zero p}
  \|(I-\Pi(p))A\| & = 
\|\hat p \otimes \hat p A\| \leq \tfrac{1}{2}\delta |p|^{-1} C(m,n)\|M\|,  
\end{align}
where $\Pi(p) := I-\hat p\otimes \hat p$, $\hat p := p/|p|$ for $p\neq 0$. To prove \eqref{e:H delta n optimal A bound for non zero p} we first note that the minimization in \eqref{e:definition of the auxiliary operator H delta n} involves an objective function that is the sum of an odd and an even function of $\overline \varepsilon$. Keeping this in mind, we compare the values of the objective function at $\overline\varepsilon$ and $-\overline \varepsilon$, and conclude that
\begin{align}
  \min_{\overline\varepsilon}\{2\delta^{-1}(p,A\overline \varepsilon)+ (MA\overline \varepsilon,A\overline \varepsilon)\} & =   \min_{\overline\varepsilon}\{ -2\delta^{-1}|(p,A\overline \varepsilon)|+ (MA\overline \varepsilon,A\overline \varepsilon)\}. \label{e:odd_even_relationship}
\end{align}
Then, writing $A = \Pi(p)A+(I-\Pi(p))A$, where , 
\begin{align*}
  (p,A\overline \varepsilon) = (p,(I-\Pi(p))A\overline \varepsilon) =  |p|(A^\top \hat p,\overline\varepsilon). 
\end{align*}
Then, given that $A$ achieves the maximum  in \eqref{e:definition of the auxiliary operator H delta n} for a given $M$ and $p$,  
\begin{align*}
 0 \leq \min_{\overline \varepsilon}\{ -2\delta^{-1}|(p,A\overline \varepsilon)|+ (MA\overline \varepsilon,A\overline \varepsilon)\},
\end{align*}
and thus for every $\overline \varepsilon$ we have
\begin{align*}
  2\delta^{-1}|(A^\top p,\overline \varepsilon)| \leq (MA\overline \varepsilon, A\overline \varepsilon) \leq C(m,n)\|M\|.
\end{align*}
Then, choosing $\overline \varepsilon$ for which
\begin{align*}
  (A^\top p,\overline \varepsilon)= |(Ae_1)^\top\cdot p| + \ldots + |(Ae_n)^\top\cdot p| = \|A^\top p\|_1,   
\end{align*}
we conclude that 
\begin{align*}
  \|A^\top p\|_1 \leq \tfrac{1}{2}\delta C(m,n)\|M\|.      
\end{align*}
Then, noting that $\|\hat p \otimes \hat p A\| =| A^\top \hat p| \leq |p|^{-1}\| A^\top  p\|_1$ we obtain \eqref{e:H delta n optimal A bound for non zero p}.

With these basic estimates in hand, consider a generic sequence $M_k,p_k,\delta_k$ as in the statement of the lemma. First, take the case where $p_k \to p \neq 0$. If we let $A_k$ denote a matrix achieving the maximum for $(M_k,p_k)$, estimate \eqref{e:H delta n optimal A bound for non zero p} says (without loss of generality, $p_k \neq 0$ for every $k$)
\begin{align*}
  \|(I-\Pi(p_k))A_k\|\leq \tfrac{1}{2}\delta_k |p_k|^{-1} C(m,n)\|M_k\|\;\;\forall\;k.
\end{align*}
On account of $p \neq 0$ we know that for all sufficiently large $k$ we have $\tfrac{1}{2}|p_k|^{-1}\leq |p|^{-1}$. Likewise, for large enough $k$ we have $\|M_k\|\leq 1+ 2\|M\|$. Therefore for all large enough $k$ we have the inequality
\begin{align*}
  \|(I-\Pi(p_k))A_k\| \leq \delta_k |p|^{-1}C(m,n)(1+ 2\|M\|).    
\end{align*}
We use this to estimate $(M_kA_k\overline \varepsilon,\overline \varepsilon)$ for large $k$, via the decomposition $A_k = \Pi(p_k)A_k+(I-\Pi(p_k))A_k$,  
\begin{align*}
  (M_kA_k\overline \varepsilon, A_k\overline \varepsilon) \leq (M_k\Pi(p_k)A_k \overline\varepsilon,\Pi(p_k)A_k \overline \varepsilon) + C\|M_k\|\|(I-\Pi(p_k))A_k\|. 
\end{align*}
Then for all large enough $k$ and for every $\overline \varepsilon$ we have
\begin{align*}
  (M_kA_k\overline \varepsilon, A_k\overline \varepsilon) \leq \left \{ (M_k\Pi(p_k)A_k \overline\varepsilon,\Pi(p_k)A_k \overline \varepsilon)\right \} + C(m,n)(1+2\|M\|)^2|p|^{-1}\delta_k. 
\end{align*}
 From here it follows that
\begin{align*}
   H_{\delta_k,n}(M_k,p_k) & \leq \min_{\overline\varepsilon}(M_k(\Pi(p_k)A_k)\overline\varepsilon, (\Pi(p_k)A_k)\overline \varepsilon) + C(m,n)(1+2\|M\|)^2|p|^{-1}\delta_k\\
   & \leq H_n(M_k,p_k) + C(m,n)(1+2\|M\|)^2|p|^{-1}\delta_k,
\end{align*}
where we used that $\Pi(p_k)A_k \in \mathcal{A}(p_k)$ to obtain the second inequality. Passing to the limit, we have
\begin{align*}
   \limsup_{k \to \infty} H_{\delta_k,n}(M_k,p_k) & \leq H_n(M,p).
\end{align*}
The case $p\neq 0$ now follows simply by applying \eqref{e:H delta n basic lower pointwise bound} to $H_{\delta_k}(M_k,p_k)$, which leads to 
\begin{align*}
   \liminf \limits_{k \to \infty} H_{\delta_k,n}(M_k,p_k) & \geq \lim \limits_{k\to \infty} H_n(M_k,p_k) = H_n(M,p).
\end{align*}
This shows $(H_n)^*(M,p) = (H_n)_*(M,p) = H_n(M,p)$ for $p\neq 0$.

Now we consider what happens when $p_k \to 0$. For the liminf bound, we  use equation \eqref{e:H delta n basic lower pointwise bound} once again:
\begin{align*}
  H_{\delta,n}(M,p) \geq H_{n}(M,p) \geq \min_{q}H_{n}(M,q).
\end{align*}
The minimum being over all $q\in\mathbb{R}^m$, noting that the minimum cannot be achieved only at $q=0$, and using that $H_n(M,q)$ is $0$-homogeneous for $q\neq 0$, we obtain in fact that  
\begin{align*}
  H_{\delta,n}(M,p) \geq \min_{\sigma \in \mathbb{S}^{m-1}}H_{n}(M,\sigma).
\end{align*}
Since this is true for every $M,p,\delta$, it follows that
\begin{align*}
  \liminf_{k \to \infty} H_{\delta_k,n}(M_k,p_k) \geq \min_{\sigma \in \mathbb{S}^{m-1}}H_n(M,\sigma).    
\end{align*}
Now for the limsup bound, we use equation \eqref {e:odd_even_relationship}, which implies that
\begin{align*}
  \max_A \min_{\overline\varepsilon}\{2\delta^{-1}(p,A\overline \varepsilon)+ (MA\overline \varepsilon,A\overline \varepsilon)\} \leq \max_A \min_{\overline\varepsilon} (MA\overline \varepsilon,A\overline \varepsilon) = H_n(M,0).
\end{align*}
Then, considering the sequence $M_k,p_k,\delta_k$ we have
\begin{align*}
  \limsup_{k \to \infty} H_{\delta_k,n}(M_k,p_k) \leq H_n(M,0),
\end{align*}
which proves the bound for the limsup. To see the limsup upper bound is optimal, simply consider a sequence $M_k,p_k,\delta_k$ where $p_k = 0$ for every $k$. To see why the lower bound for the liminf is optimal, fix an arbitrary $\sigma_0 \in \mathbb{S}^{m-1}$ and consider a sequence with $p_k = (\delta_k)^{1/2}\sigma_0$, in this case, we have
\begin{align*}
  \lim_{k \to \infty} H_{\delta_k,n}(M_k,p_k) = H_n(M,\sigma_0).  
\end{align*}
Indeed, this follows by applying \eqref{e:H delta n optimal A bound for non zero p}, and noting that $\delta_k|p_k|^{-1} = \delta_k^{1/2}$, in which case the argument used for the case $p_k \to p \neq 0$ says that
\begin{align*}
  \|(I-\Pi(p_k))A_k\| \leq \delta_k^{1/2}(1+2\|M\|)    
\end{align*}
and so the distance of $A_k$ to $\mathcal{A}(\sigma_0)$ goes to zero as $k\to \infty$, proving the last limit holds. Since $\sigma_0$ is arbitrary, we can choose it to be the minimizer of $\sigma \mapsto H_n(M,\sigma)$, and this completes the proof.

\end{proof}

We close this section with a more direct description of $H_n(M,p)$ when $p\neq 0$. We will use the decomposition of a given symmetric matrix $M$ as $M = M_+-M_-$ where $M_+$ and $M_-$ are each positive semi-definite matrices with $M_+M_-=M_-M_+ = 0$. As before, for $p \neq 0$ we use $\Pi(p)$ to denote the orthogonal projection $I- |p|^{-2}p\otimes p$.

\begin{lemma}\label{l:H n is sum of positive eigenvalues}
For $k = \min\{n,m-1\}$ and any symmetric $m\times m$ matrix $M$, we have
\begin{align}\label{e:identity for Hn}
  H_n(M,p) & = \sum_{j=0}^{k-1} \sigma_{m-j}( (\Pi(p)M\Pi(p))_+),\;\text{ when } p\neq 0,
\end{align}
as well as $(H_n)^*(M,0) = (H_n)^*(M_+,0)$ and  $(H_n)_*(M,0) = \min_{\sigma \in \mathbb{S}^{m-1}} H_n((\Pi(\sigma)M \Pi(\sigma))_+,0)$.

\end{lemma}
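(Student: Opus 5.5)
The plan is to reduce everything to Remark \ref{r:Banaszczyk's theorem restated}, the restatement of Banaszczyk's theorem. That remark handles the maxmin of $(MA\overline\varepsilon,A\overline\varepsilon)$ over unconstrained admissible $A$ when $M\geq 0$. Two preliminary reductions are needed: eliminating the constraint $A\in\mathcal{A}(p)$, and replacing $M$ by its positive part.

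\textbf{Step 1 (removing the constraint).} For $A\in\mathcal{A}(p)$ we have $A=\Pi(p)A$, so
\begin{align*}
(MA\overline\varepsilon,A\overline\varepsilon)=(\Pi(p)M\Pi(p)A\overline\varepsilon,A\overline\varepsilon).
\end{align*}
Conversely, for any admissible $A$ the matrix $\Pi(p)A$ lies in $\mathcal{A}(p)$, and it gives the same value of $(\Pi(p)M\Pi(p)\,\cdot,\cdot)$. Hence, writing $N:=\Pi(p)M\Pi(p)$,
\begin{align*}
H_n(M,p)=\max_{A\in\mathcal{A}}\min_{\overline\varepsilon}(NA\overline\varepsilon,A\overline\varepsilon).
\end{align*}
The matrix $N$ is symmetric and has $p$ in its kernel.

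\textbf{Step 2 (passing to $N_+$).} Since $N\leq N_+$, the maxmin for $N$ is at most the maxmin for $N_+$. The rank of $N_+$ is at most $m-1$, so Remark \ref{r:Banaszczyk's theorem restated} bounds this maxmin by the sum of the largest $\min(n,\mathrm{rank}\,N_+)$ eigenvalues of $N_+$. This equals $\sum_{j=0}^{k-1}\sigma_{m-j}(N_+)$ with $k=\min(n,m-1)$: any extra terms beyond the rank are zero eigenvalues, and $\sigma_{m-j}(N_+)=0$ for $j\geq m-1$ because $p$ lies in the kernel.

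For the matching lower bound, take the columns of $A$ to be orthonormal eigenvectors of $N$ for its positive eigenvalues, using at most $k$ of them, and set the remaining columns to zero. These eigenvectors are orthogonal to $p$ automatically, since $p$ is in the kernel of $N$. For every $\overline\varepsilon$ the value is then exactly the sum of those positive eigenvalues, which is the right-hand side of \eqref{e:identity for Hn}.

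The one point needing care is the argument $N\leq N_+$ combined with the monotonicity of the maxmin in the matrix. This is immediate because $(NA\overline\varepsilon,A\overline\varepsilon)\leq(N_+A\overline\varepsilon,A\overline\varepsilon)$ holds pointwise in $(A,\overline\varepsilon)$.

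\textbf{Step 3 (the statements at $p=0$).} By Lemma \ref{l:liminf and limsup for H delta n} and Remark \ref{r:half relaxed limits H n delta}, we have $(H_n)^*(M,0)=\max_A\min_{\overline\varepsilon}(MA\overline\varepsilon,A\overline\varepsilon)$.

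\emph{Upper envelope.} The same two arguments as in Step 2 apply. Domination by $M_+$ gives "$\leq$". For "$\geq$", take eigenvectors of $M$ for its top positive eigenvalues, using at most $\min(n,m)$ of them. Remark \ref{r:Banaszczyk's theorem restated} applied to $M_+$ shows that this choice attains $(H_n)^*(M_+,0)$. Hence $(H_n)^*(M,0)=(H_n)^*(M_+,0)$.

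\emph{Lower envelope.} Here $(H_n)_*(M,0)=\min_\sigma H_n(M,\sigma)$. By \eqref{e:identity for Hn}, $H_n(M,\sigma)$ equals the sum of the top $k$ eigenvalues of $(\Pi(\sigma)M\Pi(\sigma))_+$. By Remark \ref{r:Banaszczyk's theorem restated}, this is exactly $H_n((\Pi(\sigma)M\Pi(\sigma))_+,0)$, because that matrix has rank at most $m-1$.

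I expect the main obstacle to be bookkeeping rather than substance. One must keep the index $k$ consistent, namely $\min(n,\mathrm{rank})$ versus $\min(n,m-1)$, and handle zero or negative eigenvalues correctly. In particular, padding with zero eigenvalues must not change either the sum or the choice of $A$.
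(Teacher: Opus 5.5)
Your proof is correct, and its skeleton matches the paper's. Step 1 is exactly the paper's reduction $H_n(M,p)=H_n(\Pi(p)M\Pi(p),0)$, and your upper bound via $N\le N_+$ plus the Banaszczyk remark is also the paper's.

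The one real difference is the reverse inequality. The paper first proves the identity $H_n(\tilde M,0)=H_n(\tilde M_+,0)$ for an arbitrary symmetric $\tilde M$. It does so by splitting any admissible $A$ as $A_1+A_2$ with $\tilde M_-A_1=0$ and $\tilde M_+A_2=0$ (e.g.\ by projecting the columns off the range of $\tilde M_-$, which keeps them in the unit ball). This gives $(\tilde M_+A\overline\varepsilon,A\overline\varepsilon)=(\tilde M A_1\overline\varepsilon,A_1\overline\varepsilon)$, and only then does the paper evaluate $H_n(\tilde M_+,0)$ through Banaszczyk's theorem. You instead exhibit the frame of top positive eigenvectors of $N$ directly, whose value is independent of $\overline\varepsilon$.

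Your route is more explicit and confines Banaszczyk's theorem to the upper bound. The paper's route gives a formula-free reduction to the positive part, which it reuses verbatim at $p=0$ for both envelope statements; you have to re-run the witness argument there instead. You also make the $\min(n,\mathrm{rank})$ versus $\min(n,m-1)$ bookkeeping explicit, which the paper leaves implicit.
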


\begin{proof}
  First, we note that as long as $p\neq 0$, then $A \in \mathcal{A}(p)$ if and only if $A = \Pi(p)A$ and $A \in \mathcal{A}(0)$. It follows that for any symmetric matrix $M$ we have
  \begin{align*}
    H_n(M,p) & = \max_{A \in \mathcal{A}(p)}\min_{\overline \varepsilon}(MA\overline\varepsilon,A\overline\varepsilon)\\
      & = \max_{A \in \mathcal{A}(p)}\min_{\overline \varepsilon}( M\Pi(p)A\overline\varepsilon,\Pi(p)A\overline\varepsilon)\\
      & = \max_{A\in \mathcal{A}(0)}\min_{\overline \varepsilon}( \Pi(p)M\Pi(p)A\overline\varepsilon,A\overline\varepsilon).
  \end{align*}
  Put in succinct terms, this shows that when $p \neq 0$ we have
  \begin{align*}
    H_n(M,p) = H_n(\Pi(p)M\Pi(p),0).   
  \end{align*}
  In light of this, we shall show that for any matrix $\tilde M$ we have
  \begin{align*} 
    H_n(\tilde M,0) = H_n(\tilde M_+,0),
  \end{align*}
  where $\tilde M_{\pm}$ are the aforementioned decomposition, that is $\tilde M = \tilde M_+-\tilde M_-$.

  Since $\tilde M_-$ is positive semi-definite, for any $\xi$ we have
  \begin{align*}
    (\tilde M\xi,\xi) \leq (\tilde M_+\xi,\xi)    
  \end{align*}
  with equality holding if and only if $\tilde M_-\xi = 0$. Using that this holds for $\xi = A\overline \varepsilon$ for any $A,\varepsilon$ it follows that 
  \begin{align*}
    \max_{A\in\mathcal{A}(0)}\min_{\overline \varepsilon} (\tilde M A\overline \varepsilon,A\overline \varepsilon) \leq \max_{A\in\mathcal{A}(0)}\min_{\overline \varepsilon}\left \{ (\tilde M_+A\overline \varepsilon,A \overline \varepsilon) \right \}.
  \end{align*}
  In other words, we have proved that
  \begin{align*}
    H_n(\tilde M,0) \leq H_n(\tilde M_+,0).    
  \end{align*}
  On the other hand, given any $A \in \mathcal{A}(0)$,  there are matrices $A_1,A_2\in\mathcal{A}(0)$ such that $A = A_1+A_2$, $A_1,A_2 \in \mathcal{A}(0)$ and $\tilde M_-A_1=0$ and $\tilde M_+A_2 = 0$. In this case, we have
  \begin{align*}
    (\tilde M_+A\overline\varepsilon,A\overline\varepsilon) = (\tilde M A_1\overline \varepsilon,A_1\overline \varepsilon) 
  \end{align*}
  and therefore, 
  \begin{align*}
    \max_{A\in\mathcal{A}(0)}\min_{\overline\varepsilon}(\tilde M_+A\overline\varepsilon,A\overline \varepsilon) \leq \max_{A\in\mathcal{A}(0)}\min_{\overline\varepsilon}(\tilde MA\overline\varepsilon,A\overline \varepsilon).   
  \end{align*}
  This proves the reverse inequality, 
  \begin{align*}
    H_n(\tilde M_+,0)\leq H_n(\tilde M,0).    
  \end{align*}
  We apply Theorem \ref{t:Banaszczyk s theorem} with the matrix $\tilde M_+$ (using the observations made in Remark \ref{r:Banaszczyk's theorem restated}) and conclude that 
  \begin{align*}
    H_n(\tilde M_+,0) = \sum \limits_{j=0}^{k-1}\sigma_{m-j}(\tilde M_+).
  \end{align*}
  Applying the above to $\tilde M = \Pi(p)M\Pi(p)$ we complete the proof of the lemma in the case when $p\neq 0$.
  
  As for $p=0$, since $(H_n)^* = H_n$ the argument above already yields $(H_n)^*(M,0) = (H_n)^*(M_+,0)$. For $(H_n)_*$ we apply the arguments from $p\neq 0$ together with Lemma \ref{l:liminf and limsup for H delta n} as follows
  \begin{align*}
    (H_n)_*(M,0) & = \min_{\sigma \in \mathbb{S}^{m-1}}H_n(M,\sigma)\\
    & = \min_{\sigma \in \mathbb{S}^{m-1}}H_n(\Pi(\sigma)M\Pi(\sigma),0)\\
    & = \min_{\sigma \in \mathbb{S}^{m-1}}H_n((\Pi(\sigma)M\Pi(\sigma))_+,0).
  \end{align*}
\end{proof}

  In the case when $n\geq m-1$ we have that $k= m-1$, and in this case \eqref{e:identity for Hn} says that when $p\neq 0$
  \begin{align*}
    H_n(M,p) = \tr((\Pi(p)M\Pi(p))_+).
  \end{align*}
  For a given smooth function $\phi$, the quantity $|\nabla \phi|^{-1}H_n(D^2\phi(x),\nabla \phi(x))$ is the sum of the non-negative principal curvatures of the level set of $\phi$ passing through $x$ (this at least when $|\nabla \phi(x)|\neq 0$). If the sublevel sets of $\phi$ are convex, we have their mean curvature, 
  \begin{align*}
    H_n(D^2\phi,\nabla \phi) = |\nabla \phi| \dv\left (\frac{\nabla \phi}{|\nabla \phi|}\right ).
  \end{align*}
  \subsection{Flow by principal curvatures and flow by positive principal curvatures}\label{s:positive curvatures versus curvatures} The last above point is important for us and is worth dwelling upon. Things are different if $u(\cdot,t)$ has sublevel sets which fail to be convex. To make this situation clear we consider another operator, defined for every $n$ by 
  \begin{align*}
\overline H_n(M,p) = \sum\limits_{j=0}^{k-1}\sigma_{m-j}(\Pi(p)M\Pi(p)),
\end{align*}
  with $k = \min\{n,m-1\}$ just as before (note the difference is we are no longer taking the positive part of $\Pi(p)M\Pi(p)$). We also consider the associated level set PDE,
  \begin{align}\label{e:standard curvature flow}
    \partial_t \overline w + \tfrac{1}{2}\overline H_n(D^2\overline w,\nabla \overline w) = 0.
  \end{align}
  This PDE is the level set formulation for flow by the sum of the largest $k$ curvatures --  in contrast to the flow by the sum of the largest k \emph{non-negative} curvatures.  We note that if $\overline w$ is a solution of \eqref{e:standard curvature flow}  such that $\overline w(\cdot,t)$ has convex sublevel sets for every $t$, then $\overline w$ will also solve  
    \begin{align*}
    \partial_t \overline w + \tfrac{1}{2} H_n(D^2\overline w,\nabla \overline w) = 0,
  \end{align*}
  (the difference is we have the operator $H_n$ instead of $\overline H_n$).  Later at the end of Section \ref{s:continuous limit} we review how when the terminal data is given by $\|\cdot\|_E$ the solution to \eqref{e:standard curvature flow} in fact has this property -- in fact, for $n\geq m-1$ and smooth solutions this follows from a celebrated theorem by Huisken \cite{Huisken1984}. The corresponding extension of Huisken's theorem for viscosity solutions was obtained by Evans and Spruck \cite{EvansSpruck1991}, see also discussion at the end of Appendix \ref{a:basics of viscosity solutions}.

\section{Proof of Theorem \ref{t:main theorem}}\label{s:continuous limit}

Now we shall show the convergence of the rescaled value functions, $w_\delta:\mathbb{R}^m\times (\delta^2 \mathbb{Z}_-)\to\mathbb{R}$, as $\delta\to 0$. As we will see, they converge to a continuous function $w(x,t)$ defined in $\mathbb{R}^m\times (-\infty,0]$, and $w(x,t)$ is the unique viscosity solution to the terminal value problem 
\begin{align}
    \left \{ \begin{array}{rl} \partial_t w +\tfrac{1}{2}H_n(D^2w,\nabla w)  & = 0 \textnormal{ in } \mathbb{R}^m\times (-\infty,0),\\
    w(x,0) & = \|x\|_E \textnormal{ in } \mathbb{R}^m. \end{array} \right.\label{e:curvature flow initial data level set formulation}
\end{align}
In Appendix \ref{a:basics of viscosity solutions} we review the basic definitions and some essential results from the theory of viscosity solutions needed in this section. 

The convergence happens in a locally uniform way (restricted to the domain of definition of $w_\delta$), as detailed below. We record this as a theorem, the proof of which will be the main focus of this section.

\begin{theorem}\label{t:continuum limit smooth E case}
  Assume $E$ has a $C^2$ boundary. As $\delta \to 0^+$, the functions $w_\delta(x,t)$ converge to a limit $w(x,t)$ in the sense that for any $R>0$ we have
  \begin{align*}
    \lim \limits_{\delta \to 0^+} \max \{ |w_\delta(x,t)-w(x,t)| \text{ for } |x| \leq R,|t|\leq R \text{ and } t \in \delta^2 \mathbb{Z}_-\} = 0,
  \end{align*}
  where $w: \mathbb{R}^m\times (-\infty,0] \to \mathbb{R}$ is the unique viscosity solution of the terminal value problem \eqref{e:curvature flow initial data level set formulation}.

\end{theorem}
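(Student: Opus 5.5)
The plan is the Barles--Souganidis half-relaxed limits method, adapted to the discrete-in-time scheme \eqref{e:rescaled value function terminal value problem}. We define, for $(x,t)\in\mathbb{R}^m\times(-\infty,0]$,
\begin{align*}
  \overline w(x,t) := \limsup_{\delta\to 0,\, y\to x,\, s\to t,\, s\in\delta^2\mathbb{Z}_-} w_\delta(y,s),\qquad \underline w(x,t) := \liminf_{\delta\to 0,\, y\to x,\, s\to t,\, s\in\delta^2\mathbb{Z}_-} w_\delta(y,s).
\end{align*}

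These are finite because of local uniform bounds. Lemma \ref{lemma:bound for the value function}, after rescaling, gives $0\le w_\delta(x,t)\le \sqrt{\|x\|_E^2+C|t|}$ uniformly in $\delta$. The Lipschitz property of $v$ transfers verbatim: each $w_\delta(\cdot,t)$ is $1$-Lipschitz in $\|\cdot\|_E$. Hence $\overline w$ is upper semicontinuous, $\underline w$ is lower semicontinuous, and $\underline w\le\overline w$.

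\textbf{Terminal values.} We first check that $\overline w(x,0)\le\|x\|_E\le\underline w(x,0)$. The lower bound follows from monotonicity in time: $w_\delta(x,t)\ge w_\delta(x,0)=\|x\|_E$, combined with the Lipschitz bound. For the upper bound we use a barrier adapted to the point $y$. Fix $y$ and consider
\begin{align*}
  b(x,t) = \sqrt{\|x\|_E^2 + C|t|}.
\end{align*}
The rescaled version of the proof of Lemma \ref{lemma:bound for the value function} shows that $b$ is a discrete supersolution for every $\delta$, so by induction $w_\delta(x,t)\le b(x,t)$. This gives $\overline w(x,0)\le \limsup_{x'\to x,\,t\to0}\sqrt{\|x'\|_E^2+C|t|}=\|x\|_E$. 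Continuity of $\|\cdot\|_E$ near $x=0$ is the only subtle point, and it is handled by the same formula.

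\textbf{Sub- and supersolution property.} This is the heart of the argument. We show that $\overline w$ is a viscosity subsolution of \eqref{e:curvature flow initial data level set formulation}, using the envelope $(H_n)^*$, and that $\underline w$ is a supersolution, using $(H_n)_*$. Take a smooth test function $\phi$ such that $\overline w-\phi$ has a strict local maximum at $(x_0,t_0)$ with $t_0<0$. By the standard argument there are points $(x_\delta,t_\delta)\to(x_0,t_0)$, along a subsequence $\delta\to0$, at which $w_\delta-\phi$ attains its local maximum over the grid, with $w_\delta(x_\delta,t_\delta)\to\overline w(x_0,t_0)$. Monotonicity of the scheme then lets us replace $w_\delta$ by $\phi+c_\delta$ inside both the max--min operator and the time difference. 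The operator is monotone because it is a max--min of point evaluations, and since the steps $\delta A\bar\varepsilon$ have length at most $\delta n$, only a neighborhood of size $O(\delta)$ matters. This yields
\begin{align*}
  \partial^{-\delta^2}_t\phi(x_\delta,t_\delta)+\tfrac12\mathcal{H}_{\delta,n}(\phi)(x_\delta,t_\delta)\ge 0.
\end{align*}
By Proposition \ref{p:consistency of discretization}, this becomes
\begin{align*}
  \partial_t\phi(x_\delta,t_\delta)+\tfrac12 H_{\delta,n}\big(D^2\phi(x_\delta,t_\delta),\nabla\phi(x_\delta,t_\delta)\big)\ge -o(1).
\end{align*}
Taking $\limsup$ and applying Lemma \ref{l:liminf and limsup for H delta n} gives $\partial_t\phi+\tfrac12(H_n)^*(D^2\phi,\nabla\phi)\ge0$ at $(x_0,t_0)$. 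This is exactly the subsolution inequality, given the sign convention for backward equations. The supersolution case is symmetric and uses the liminf bound with $(H_n)_*$.

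I expect this step to be the main obstacle. The delicate points are the case $\nabla\phi(x_0,t_0)=0$, where Lemma \ref{l:liminf and limsup for H delta n} supplies exactly the envelopes required by the Evans--Spruck and Giga--Goto--Ishii--Sato definition, possibly together with the usual restricted class of test functions in that case. A second delicate point is making the discrete-time maximization rigorous: one must handle grid points in $t$ and take the max over a compact neighborhood.

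\textbf{Conclusion.} The comparison principle (Theorem \ref{t:comparison principle}) applies to the terminal problem for this geometric, degenerate parabolic operator. For sub- and supersolutions of at most linear growth, which holds here by the bounds above, it gives $\overline w\le\underline w$. Hence $\overline w=\underline w=:w$, and $w$ is continuous and is a viscosity solution with $w(x,0)=\|x\|_E$; uniqueness also follows from comparison. Equality of the half-relaxed limits implies local uniform convergence on $\{|x|\le R,\,|t|\le R\}\cap(\mathbb{R}^m\times\delta^2\mathbb{Z}_-)$ by the standard compactness argument: if convergence failed, a sequence with $|w_\delta-w|\ge\eta$ would produce a point where $\overline w\ne\underline w$.
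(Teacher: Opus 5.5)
Your proposal is correct and follows essentially the same route as the paper: uniform bounds from the rescaled barrier of Lemma~\ref{lemma:bound for the value function}, terminal values of the half-relaxed limits via time-monotonicity plus that barrier, the sub/supersolution property via stability of strict maxima, Proposition~\ref{p:consistency of discretization} and Lemma~\ref{l:liminf and limsup for H delta n}, and then Theorem~\ref{t:comparison principle} to get your $\overline w=\underline w$ and locally uniform convergence. One cosmetic slip: the scheme links time $t$ to $t+\delta^2$, so the discrete inequality at a maximum point $(x_\delta,t_\delta)$ is really evaluated at $(x_\delta,t_\delta+\delta^2)$, as in the paper's appendix; this is harmless in the limit.
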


\subsection{Setting up the proof of convergence} The very first step towards showing the convergence is to bound the values $\{w_\delta(x,t)\}_\delta$ uniformly in $\delta$ at least in compact subsets of $(x,t)$'s.

\begin{proposition}\label{p:pointwise bound for w_delta}
  Assume the convex body $E$ has a $C^2$ boundary, there is a $C\equiv C(E,n)$ such that 
  \begin{align*}
    \|x\|_E \leq w_\delta(x,t) \leq \sqrt{\|x\|_E^2-Ct},\;\forall\;x\in\mathbb{R}^m,t<0.
  \end{align*}  
\end{proposition}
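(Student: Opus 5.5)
The plan is to transfer the two bounds already available for $v$ to $w_\delta$ through the scaling $w_\delta(x,t) = \delta v(\delta^{-1}x,\delta^{-2}t)$, using only that $\|\cdot\|_E$ is $1$-homogeneous.

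\textbf{Lower bound.} First I will show $v(y,s)\geq \|y\|_E$ for all $y$ and all $s\in\mathbb{Z}_-$. This follows from the monotonicity in $s$ proved in the Proposition of Section \ref{s:the value function}. Iterating $v(y,s-1)\geq v(y,s)$ from $s=0$ down gives $v(y,s)\geq v(y,0)=\|y\|_E$. Equivalently, Paul may play $A=0$ at every round.

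Rescaling then gives, for $t\in\delta^2\mathbb{Z}_-$,
\begin{align*}
  w_\delta(x,t) = \delta v(\delta^{-1}x,\delta^{-2}t) \geq \delta\|\delta^{-1}x\|_E = \|x\|_E .
\end{align*}

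\textbf{Upper bound.} I will invoke Lemma \ref{lemma:bound for the value function}, which applies because $\partial E$ is $C^2$. It gives a constant $C=C(E,n)$ with $v(y,s)\leq \sqrt{\|y\|_E^2+C|s|}$. Substituting $y=\delta^{-1}x$ and $s=\delta^{-2}t$, so that $|s|=-\delta^{-2}t$, yields
\begin{align*}
  w_\delta(x,t) \leq \delta\sqrt{\delta^{-2}\|x\|_E^2 - C\delta^{-2}t} = \sqrt{\|x\|_E^2 - Ct}.
\end{align*}
The same constant works for every $\delta$, which is exactly the uniformity needed later.

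Both inequalities are stated for $t\in\delta^2\mathbb{Z}_-$, the domain of $w_\delta$; I read ``$t<0$'' in the statement as ranging over that domain. The only real content is Lemma \ref{lemma:bound for the value function}. The one point to check is that the parabolic scaling matches it exactly: the $\delta$ pulled out of the square root cancels the $\delta^{-2}$ on both terms, so there is no $\delta$-dependence left over. I do not expect a genuine obstacle.
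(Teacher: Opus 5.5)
Your proposal is correct and follows the paper's own proof. The lower bound comes from monotonicity of $v$ in $s$, and the upper bound comes from rescaling Lemma~\ref{lemma:bound for the value function}, where the parabolic scaling cancels all $\delta$-dependence, giving a constant $C(E,n)$ uniform in $\delta$.
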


\begin{proof}
This is simply a restating of Lemma \ref{lemma:bound for the value function} in terms of $w_\delta$. Indeed, from the definition of $w_\delta$ and the pointwise bound from Lemma \ref{lemma:bound for the value function} we have
\begin{align*}
  w_\delta(x,t) \leq \delta v(x/\delta,t/\delta^2) \leq \delta \sqrt{ \|x/\delta\|_E^2-C(t/\delta^2)} = \sqrt{\|x\|_E^2-Ct}.
\end{align*}
In light of this, note that
\begin{align*}
  w_\delta(x,t) -\|x\|_E \leq \sqrt{\|x\|_E^2-Ct}-\|x\|_E \leq \sqrt{C|t|}.    
\end{align*}
The last inequality follows from the inequality $\sqrt{a+b}-\sqrt{a}\leq \sqrt{b}$ valid whenever $a,b\geq 0$. The fact that $w_\delta(x,t)\geq \|x\|_E$ follows from the monotonicity of $v(y,s)$ with respect to $s$.

\end{proof}
In light of the bound in Proposition \ref{p:pointwise bound for w_delta} the convergence of the sequence $\{w_\delta\}_\delta$ would fail only if (informally speaking) the ``limsup'' and ``liminf'' of the sequence take different (but in any event finite) values. To investigate this, we must work with the half-relaxed limits of $\{w_\delta\}_\delta$, these are defined by
\begin{align*}
  w^*(x,t) & := \limsup_{\delta \to 0^+,\;x'\to x,\;t'\to t}w_\delta(x',t'),\\
  w_*(x,t) & := \liminf_{\delta \to 0^+,\;x'\to x,\;t'\to t}w_\delta(x',t'),
\end{align*}
where the convergence is understood to be along arbitrary sequences $\{\delta_k\}_k,\{x_k'\}_k,\{t'_k\}_k$ such that 
\begin{align*}
  \delta_k>0,\; t'_k \in \delta_k^2 \mathbb{Z}_- \text{ for all } k.
\end{align*}
The functions $w^*$ and $w_*$ are locally bounded functions in $\mathbb{R}^m\times (-\infty,0]$. From their definition and from Proposition \ref{p:pointwise bound for w_delta} it is clear that 
\begin{align*}
  \|x\|_E \leq w_*(x,t) \leq w^*(x,t) \leq \sqrt{\|x\|_E^2-Ct}.
\end{align*} 
For a generic family of functions $w_\delta$ we cannot expect more than the above, and so $w_* \not\equiv w^*$ in general. Now we work $w$ to rule out this last scenario, and here is where we use discrete equation solved by $w_\delta$. Concretely, we will show $w_*$ and $w^*$ are respectively a viscosity supersolution and viscosity  subsolution  for the problem \eqref{e:curvature flow initial data level set formulation} (see Appendix \ref{a:basics of viscosity solutions} for a review of basic notions and results).  From here, the comparison principle for viscosity sub/supersolutions says that
\begin{align*}
    w^*(x,t) \leq w_*(x,t) \text{ for all } x,t,
\end{align*}
from where $w_* = w^* = w$ will follow. The agreement between the half-relaxed limits guarantees the locally uniform convergence stated in Theorem \ref{t:continuum limit smooth E case}, as we explain in the following remark. 

\begin{remark}\label{r:uniform convergence}
  Consider $\delta_k \to 0$, and for each $k$ take $(x_k,t_k) \in \overline{B}_R(0)\times [-R^2,0] \cap \mathbb{R}^m\times \delta_k^2\mathbb{Z}_-$ such that
    \begin{align*}
    |w_{\delta_k}(x_k,t_k)-w(x_k,t_k)| = \max \{ |w_{\delta_k}(x,t)-w(x,t)| \text{ for } |x| \leq R,|t|\leq R \text{ and } t \in \delta_k^2 \mathbb{Z}_-\}
  \end{align*}
  Passing to a subsequence $(\delta_k',x_k',t_k')$ we may assume without loss of generality $x_k'\to x_*$ and $t_k'\to t_*$ for some $(x_*,t_*)$. Then, since $w$ is continuous, we have that
  \begin{align*}
    \lim\limits_{k}|w_{\delta_k'}(x_k',t_k')-w(x_k',t_k')| = |w(x_*,t_*)-w(x_*,t_*)| = 0.
  \end{align*}    
\end{remark}

\subsection{Establishing the half-relaxed limits are a sub/super solution} We now proceed to show the subsolution and supersolution property for $w^*$ and $w_*$ respectively. First, we show both $w^*$ and $w_*$ have the right terminal values at $t=0$ (in particular, the functions agree at $t=0$).

\begin{proposition}\label{p:half relaxed limits take inital data}
As before, assume $E$ has a $C^2$ boundary. The half-relaxed limits agree at time $0$, namely
\begin{align*}
  w^*(x,0) = w_*(x,0) = \|x\|_E.    
\end{align*}
    
\end{proposition}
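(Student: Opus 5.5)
The plan is to squeeze both half-relaxed limits at $t=0$ between two continuous functions that agree at $t=0$. The tool is Proposition \ref{p:pointwise bound for w_delta}, which gives, for every $\delta>0$, every $x'\in\mathbb{R}^m$ and every $t'\in\delta^2\mathbb{Z}_-$,
\begin{align*}
  \|x'\|_E \leq w_\delta(x',t') \leq \sqrt{\|x'\|_E^2 - Ct'},
\end{align*}
with $C=C(E,n)$ independent of $\delta$. At $t'=0$ this also holds as an equality by the terminal condition \eqref{e:rescaled terminal value}. The two outer functions do not depend on $\delta$, and both are continuous in $(x',t')$ on $\mathbb{R}^m\times(-\infty,0]$: $\|\cdot\|_E$ is a norm, hence Lipschitz with respect to the Euclidean norm, and $(x',t')\mapsto \sqrt{\|x'\|_E^2 - Ct'}$ is a composition of continuous maps whose radicand is nonnegative for $t'\le 0$.

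Fix $x$. Take any admissible sequence $\delta_k\to 0^+$, $x_k'\to x$, $t_k'\to 0$ with $t_k'\in\delta_k^2\mathbb{Z}_-$. Evaluating the sandwich along this sequence gives
\begin{align*}
  \|x_k'\|_E \leq w_{\delta_k}(x_k',t_k') \leq \sqrt{\|x_k'\|_E^2 - Ct_k'} \leq \|x_k'\|_E + \sqrt{C|t_k'|},
\end{align*}
where the last step uses $\sqrt{a+b}\le\sqrt a+\sqrt b$. Both outer terms tend to $\|x\|_E$, so every such sequence has $\lim_k w_{\delta_k}(x_k',t_k') = \|x\|_E$.

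Since $w^*(x,0)$ and $w_*(x,0)$ are, by definition, the supremum and infimum of these limits over all admissible sequences, it follows that $w^*(x,0)=w_*(x,0)=\|x\|_E$. This already recovers the earlier observation $\|x\|_E\le w_*\le w^*\le\sqrt{\|x\|_E^2-Ct}$ evaluated at $t=0$.

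I expect no real obstacle. The substantive input is the uniform-in-$\delta$ upper barrier $\sqrt{\|x\|_E^2-Ct}$ from Lemma \ref{lemma:bound for the value function}, and that is exactly where the $C^2$ hypothesis on $\partial E$ enters, through the constant $C(E,n)$. The one point to check is that the modulus $\sqrt{C|t'|}$ is independent of $\delta$, so the barrier controls the behaviour of $w_\delta$ as $t'\to0$ uniformly, rather than only its values at $t'=0$.
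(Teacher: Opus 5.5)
Your proof is correct and follows the same route as the paper: sandwich $w_\delta$ between $\|x\|_E$ (from monotonicity in $t$) and $\|x\|_E+\sqrt{C|t|}$ (from the uniform bound of Lemma \ref{lemma:bound for the value function} together with $\sqrt{a+b}-\sqrt{a}\le\sqrt{b}$). Then pass to the limit along admissible sequences $x'\to x$, $t'\to 0$ to get $\|x\|_E\le w_*(x,0)\le w^*(x,0)\le\|x\|_E$.
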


\begin{proof}

This is a consequence of the fact that $w_\delta(x,t)$ is monotone in $t$ and the bound in Proposition \ref{p:pointwise bound for w_delta}. Indeed, on one hand $w_\delta(x,t_2) \geq w_\delta(x,t_1)$ for every $t_1,t_2\in \delta^2\mathbb{Z}_{-}$ with $t_2\leq t_1$, so that in particular
\begin{align*}
  w_\delta(x,t) \geq w_\delta(x,0) = \|x\|_E \;\forall\;x\in \mathbb{R}^m,\;\forall\;t\in\delta^2 \mathbb{Z}_{-}.     
\end{align*}
On the other hand, thanks to the second inequality in Proposition \ref{p:pointwise bound for w_delta} we have
\begin{align*}
  w_\delta(x,t)-\|x\|_E \leq \sqrt{C|t|} \;\forall\;x\in \mathbb{R}^m,\;\forall\;t\in\mathbb{Z}_{-}.        
\end{align*}
In other words, for all $(x,t)$ in the domain of definition of $w_\delta$ we have
\begin{align*}
  \|x\|_E \leq w_\delta(x,t) \leq \|x\|_E + \sqrt{C|t|}.   
\end{align*}
Then, passing to the limit along sequences $x'\to x$ and $t'\to 0$ we conclude that
\begin{align*}
  \|x\|_E \leq w_*(x,0)\leq w^*(x,0) \leq \|x\|_E,  
\end{align*}
and therefore $w^*(x,0)= w_*(x,0) = \|x\|_E$.
\end{proof}

\begin{lemma}\label{l:half relaxed limits are sub or super solutions} Suppose $\phi(x,t)$ is a smooth function such that $w^*-\phi$ has a local maximum at $(x_0,t_0)$, $t_0<0$, then
  \begin{align*}
    \partial_t\phi(x_0,t_0) + \tfrac{1}{2}(H_n)^*(D^2\phi(x_0,t_0),\nabla \phi(x_0,t_0)) \geq 0.   
  \end{align*}
  If instead $w_*-\phi$ has a local minimum at $(x_0,t_0)$ then
  \begin{align*}
    \partial_t\phi(x_0,t_0) + \tfrac{1}{2}(H_n)_*(D^2\phi(x_0,t_0),\nabla \phi(x_0,t_0)) \leq 0.   
  \end{align*}

\end{lemma}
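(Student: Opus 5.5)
We follow the standard Barles--Souganidis argument, adapted to the discrete time grid. We treat the subsolution case; the supersolution case is symmetric, with max/min and limsup/liminf interchanged.

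\textbf{Reduction to a strict maximum and discrete approximants.} Replacing $\phi$ by $\phi+|x-x_0|^4+|t-t_0|^2$, which changes neither $\partial_t\phi$, $\nabla\phi$ nor $D^2\phi$ at $(x_0,t_0)$, we may assume the local maximum of $w^*-\phi$ is strict on a closed neighborhood $N=\overline B_r(x_0)\times[t_0-r,t_0+r]$ with $t_0+r<0$. By the definition of $w^*$ there are $\delta_k\to0$ and points $(y_k,s_k)\to(x_0,t_0)$ with $s_k\in\delta_k^2\mathbb{Z}_-$ and $w_{\delta_k}(y_k,s_k)\to w^*(x_0,t_0)$. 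Let $(x_k,t_k)$ be a maximizer of $w_{\delta_k}-\phi$ over $N\cap(\mathbb{R}^m\times\delta_k^2\mathbb{Z}_-)$. Only finitely many time slices occur, and $w_{\delta_k}(\cdot,t)$ is Lipschitz, so the maximum is attained; the local bound of Proposition \ref{p:pointwise bound for w_delta} keeps everything finite. The usual strictness argument then gives $(x_k,t_k)\to(x_0,t_0)$ and $w_{\delta_k}(x_k,t_k)\to w^*(x_0,t_0)$. Set $c_k=w_{\delta_k}(x_k,t_k)-\phi(x_k,t_k)$. Then $w_{\delta_k}\le \phi+c_k$ on $N$, with equality at $(x_k,t_k)$.

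\textbf{Plugging into the scheme.} For large $k$, the points $(x_k+\delta_kA\bar\varepsilon,t_k)$ and $(x_k,t_k-\delta_k^2)$ lie in $N$, since $|A\bar\varepsilon|\le n$. Both operators in \eqref{e:rescaled value function terminal value problem} are monotone and commute with constants. Using $w_{\delta_k}(x_k,t_k-\delta_k^2)\le\phi(x_k,t_k-\delta_k^2)+c_k$ and $w_{\delta_k}(x_k+\delta_kA\bar\varepsilon,t_k)\le \phi(x_k+\delta_kA\bar\varepsilon,t_k)+c_k$, with equality at the base point, we get
\begin{align*}
  0=\partial^{-\delta_k^2}_t w_{\delta_k}(x_k,t_k)+\tfrac12\mathcal H_{\delta_k,n}(w_{\delta_k})(x_k,t_k)\le \partial^{-\delta_k^2}_t\phi(x_k,t_k)+\tfrac12\mathcal H_{\delta_k,n}(\phi)(x_k,t_k).
\end{align*}
Here the time term enters with the favorable sign because $\partial_t^{-\delta^2}$ subtracts the earlier value. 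Proposition \ref{p:consistency of discretization} applies since $\phi$ is smooth, so its derivatives have a uniform modulus on $N$ (we can localize by a cutoff, as only values in $N$ matter). It yields
\begin{align*}
  0\le \partial_t\phi(x_k,t_k)+\tfrac12 H_{\delta_k,n}(D^2\phi(x_k,t_k),\nabla\phi(x_k,t_k))+o(1).
\end{align*}

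\textbf{Passing to the limit.} This is the step where the degeneracy at $\nabla\phi=0$ matters. We apply Lemma \ref{l:liminf and limsup for H delta n} with $M_k=D^2\phi(x_k,t_k)\to D^2\phi(x_0,t_0)$ and $p_k=\nabla\phi(x_k,t_k)\to\nabla\phi(x_0,t_0)$. If $\nabla\phi(x_0,t_0)\neq0$, the lemma gives $\lim_k H_{\delta_k,n}(M_k,p_k)=H_n=(H_n)^*$. If $\nabla\phi(x_0,t_0)=0$, the lemma gives $\limsup_k\le (H_n)^*(D^2\phi(x_0,t_0),0)$. In either case we obtain $\partial_t\phi+\tfrac12(H_n)^*\ge0$ at $(x_0,t_0)$.

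For $w_*$ we take minimizers instead, reverse all the inequalities, and use the liminf bound $\ge (H_n)_*$ from the same lemma.

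The only delicate point is making sure the stencil stays inside $N$ and that the consistency estimate is uniform along the moving points $(x_k,t_k)$. Both follow from $\delta_k\to0$ and the local smoothness of $\phi$.
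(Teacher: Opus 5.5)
\textbf{There is a genuine gap in the step ``Plugging into the scheme''.} Your overall plan matches the paper's: the strict maximum via the quartic perturbation, discrete maximizers $(x_k,t_k)\to(x_0,t_0)$, consistency via Proposition \ref{p:consistency of discretization}, and Lemma \ref{l:liminf and limsup for H delta n} for the case $\nabla\phi(x_0,t_0)=0$. But the inequality you write in that step does not follow. Your remark that the time term has the favorable sign is backwards. Precisely because $\partial_t^{-\delta^2}$ subtracts the earlier value, the facts $w_{\delta_k}(x_k,t_k)=\phi(x_k,t_k)+c_k$ and $w_{\delta_k}(x_k,t_k-\delta_k^2)\le\phi(x_k,t_k-\delta_k^2)+c_k$ give
\begin{align*}
  \partial_t^{-\delta_k^2}w_{\delta_k}(x_k,t_k)\;\ge\;\partial_t^{-\delta_k^2}\phi(x_k,t_k).
\end{align*}
Only the spatial part goes the right way: $\mathcal{H}_{\delta_k,n}(w_{\delta_k})(x_k,t_k)\le\mathcal{H}_{\delta_k,n}(\phi)(x_k,t_k)$. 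Bounds pointing in opposite directions cannot be added.

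Equivalently, the scheme at level $t_k$ is the identity
\begin{align*}
  w_{\delta_k}(x_k,t_k-\delta_k^2)=\max_A\min_{\bar\varepsilon}w_{\delta_k}(x_k+\delta_kA\bar\varepsilon,t_k).
\end{align*}
Its ``center'' is $(x_k,t_k-\delta_k^2)$, not the touching point, and both sides are merely bounded above by the corresponding expressions in $\phi+c_k$. What you actually obtain at $(x_k,t_k)$ is $\partial_t^{-\delta_k^2}\phi+\tfrac12\mathcal{H}_{\delta_k,n}(\phi)\ge -D_k$, where $D_k:=\delta_k^{-2}\bigl(c_k-(w_{\delta_k}-\phi)(x_k,t_k-\delta_k^2)\bigr)$. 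The maximum property only gives $D_k\ge 0$. Nothing available controls $w_{\delta_k}$ in time at the scale $\delta_k^2$, so nothing forces $D_k\to0$.

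\textbf{The fix, which is what the paper does,} is to use the dynamic programming step that \emph{produces} the value at the touching point. Since $t_k\to t_0<0$, for large $k$ you have $t_k+\delta_k^2\le 0$, and all points $(x_k+\delta_kA\bar\varepsilon,t_k+\delta_k^2)$ lie in $N$. Then \eqref{e:rescaled Bellman equation} gives
\begin{align*}
  \phi(x_k,t_k)+c_k=w_{\delta_k}(x_k,t_k)=\max_A\min_{\bar\varepsilon}w_{\delta_k}(x_k+\delta_kA\bar\varepsilon,t_k+\delta_k^2)\le\max_A\min_{\bar\varepsilon}\phi(x_k+\delta_kA\bar\varepsilon,t_k+\delta_k^2)+c_k,
\end{align*}
that is, $\partial_t^{-\delta_k^2}\phi(x_k,t_k+\delta_k^2)+\tfrac12\mathcal{H}_{\delta_k,n}(\phi)(x_k,t_k+\delta_k^2)\ge 0$. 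From there, apply consistency and Lemma \ref{l:liminf and limsup for H delta n} at the shifted points $(x_k,t_k+\delta_k^2)\to(x_0,t_0)$ exactly as you planned. The supersolution case is the mirror image: take minimizers and use $w_{\delta_k}\ge\phi+c_k$ on the slice $t_k+\delta_k^2$.

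With this correction your argument coincides with the paper's. Your quartic perturbation is a harmless simplification of the paper's $\beta(|x-x_0|^2+(t-t_0)^2)$ followed by $\beta\to 0$.
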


\begin{proof}

As this is a standard argument in the theory of viscosity solutions we leave the proof of this lemma for later in Appendix \ref{a:proof of some standard proposition}.    
\end{proof}

We note that in the present setting the argument for Lemma \ref{l:half relaxed limits are sub or super solutions} relies crucially on the relationship between $\mathcal{H}_{\delta,n}$, $H_{\delta,n}$, and $H_n$ as established in Lemma \ref{l:liminf and limsup for H delta n} and Proposition \ref{p:consistency of discretization}, from these the rest of the proof follows common arguments in the literature.

\begin{proof}[Proof of Theorem \ref{t:continuum limit smooth E case}]

As explained earlier in this section, the theorem will be proved if we can show that $w_*(x,t) = w^*(x,t)$ for all $x,t$. Since $E$ has a $C^2$ boundary we can apply Proposition \ref{p:half relaxed limits take inital data} and Lemma \ref{l:half relaxed limits are sub or super solutions} and conclude that (in the viscosity sense) we have 
\begin{align*}
  \partial_t w^* + \tfrac{1}{2}H_n(D^2w^*,\nabla w^*) & \geq 0 \text{ in } \mathbb{R}^m\times (-\infty,0),\\
  w^*(x,0) & = \|x\|_E,
\end{align*}
and 
\begin{align*}
  \partial_t w_* + \tfrac{1}{2}H_n(D^2w_*,\nabla w_*) & \leq 0 \text{ in } \mathbb{R}^m\times (-\infty,0),\\
  w_*(x,0) & = \|x\|_E,
\end{align*}
 Furthermore, 
\begin{align*}
  \|x\|_E \leq w_*(x,t) \leq w^*(x,t) \leq \sqrt{\|x\|_E^2-Ct}.    
\end{align*}
We are then within the hypothesis of the comparison principle (Theorem \ref{t:comparison principle}) and conclude that $w^* \leq w_*$, and so $w^*=w_*$. It follows $w_*$ and $w^*$ are the same function which is both a subsolution and a supersolution, and therefore is the unique viscosity solution to the terminal value problem. This finishes the proof.

\end{proof}

\begin{remark}\label{r:rescaling the 1/2 factor} Strictly speaking, the level sets of $w$ are not evolving by the curvature flow due to the factor of $1/2$ in the PDE. If we set $\tilde w(x,t) := w(x,2t)$, then $\partial_t \tilde w = 2\partial_t w$ so that
\begin{align*}
  \partial_t \tilde w + H_n(D^2\tilde w,\nabla\tilde w) = 0,    
\end{align*}
and the level sets of the rescaled function $\tilde w$ are evolving via the respective curvature flow.

\end{remark}

\subsection{Proof of the main theorem} We recall the functional $\mathcal{K}_{n,T}(E)$, which we defined by
\begin{align*}
  \mathcal{K}_{n,T}(E) = \max_{A_1}\min_{\overline \varepsilon_1}\ldots\max_{A_T}\min_{\overline \varepsilon_T}\|A_1\overline \varepsilon_1+\ldots+A_T\overline \varepsilon_T\|_E.  
\end{align*}

At first sight it is not obvious the limit might exist. This convergence and identification of the limit will be established first when $E$ has a smooth boundary (by means of Theorem \ref{t:continuum limit smooth E case}). For a non smooth $E$ we will use an approximation argument exploiting the monotonicity and homogeneity properties of the functionals $\mathcal{K}_{n,T}$ and $\tau_n$.  First, the monotonicity: if $E,E'$ are centrally symmetric convex bodies and $E \subset E'$, then 
\begin{align*}
  \mathcal{K}_{n,T}(E') \leq \mathcal{K}_{n,T}(E),\; \tau_n(E) \leq \tau_n(E').
\end{align*}
Second, the homogeneity: if $E$ is a centrally symmetric convex body and $\lambda>0$ we have 
\begin{align*}
  \mathcal{K}_{n,T}(\lambda E) = \lambda^{-1} \mathcal{K}_{n,T}(E),\; \tau_n(\lambda E) = \lambda^2 \tau_n(E).    
\end{align*}
These properties imply the following approximation lemma.

\begin{lemma}\label{l:approximation lemma}
    Given a centrally symmetric convex body $E$ and $\epsilon \in (0,1)$ there is a centrally symmetric convex body $E_\epsilon$ with a smooth boundary such that for every $n$ and $t\in \mathbb{N}$ we have
    \begin{align*}
      \frac{1-\epsilon}{1+\epsilon} \mathcal{K}_{n,T}(E_\epsilon)\sqrt{\tau_n(E_\epsilon)} \leq \mathcal{K}_{n,T}(E)\sqrt{\tau_n(E)} \leq  \frac{1+\epsilon}{1-\epsilon} \mathcal{K}_{n,T}(E_\epsilon)\sqrt{\tau_n(E_\epsilon)}.    
    \end{align*}

\end{lemma}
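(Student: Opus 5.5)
Given $E$ and $\epsilon\in(0,1)$, I will choose a centrally symmetric convex body $E_\epsilon$ with smooth boundary such that
\begin{align*}
  (1-\epsilon)E_\epsilon \subset E \subset (1+\epsilon)E_\epsilon .
\end{align*}
One standard construction is to mollify the norm $\|\cdot\|_E$, for instance by convolving the support function, or the gauge, with a smooth radial kernel on the sphere, then add a small multiple of $|x|^2$ to gain uniform convexity, and take a sublevel set. This preserves central symmetry and convexity. Since all norms on $\mathbb{R}^m$ are equivalent and the gauge is continuous and $1$-homogeneous, uniform approximation of the gauge on the unit sphere within relative error $\epsilon$ yields the two inclusions. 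I expect this construction to be the only non-formal step, but it is classical (e.g.\ Schneider's book on convex bodies), so I would cite it rather than reprove it.

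\textbf{Combining the properties.} I then use the monotonicity and homogeneity properties stated just before the lemma. From $E\subset (1+\epsilon)E_\epsilon$, monotonicity and homogeneity give
\begin{align*}
  \mathcal{K}_{n,T}(E)\ \geq\ \mathcal{K}_{n,T}((1+\epsilon)E_\epsilon) = (1+\epsilon)^{-1}\mathcal{K}_{n,T}(E_\epsilon),
\end{align*}
and also $\tau_n(E)\leq (1+\epsilon)^2\tau_n(E_\epsilon)$. Similarly, from $(1-\epsilon)E_\epsilon\subset E$ we get
\begin{align*}
  \mathcal{K}_{n,T}(E)\leq (1-\epsilon)^{-1}\mathcal{K}_{n,T}(E_\epsilon), \qquad \tau_n(E)\geq (1-\epsilon)^2\tau_n(E_\epsilon).
\end{align*}

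\textbf{Multiplying the bounds.} For the upper bound, multiply the upper bound on $\mathcal{K}_{n,T}(E)$ by $\sqrt{\tau_n(E)}\leq (1+\epsilon)\sqrt{\tau_n(E_\epsilon)}$ to get the factor $\frac{1+\epsilon}{1-\epsilon}$. For the lower bound, multiply the lower bound on $\mathcal{K}_{n,T}(E)$ by $\sqrt{\tau_n(E)}\geq (1-\epsilon)\sqrt{\tau_n(E_\epsilon)}$ to get $\frac{1-\epsilon}{1+\epsilon}$. All quantities are nonnegative, so multiplying the inequalities is legitimate. The bounds hold for every $n$ and $T$ because the inclusions do not depend on them.

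\textbf{Justifying the two properties if needed.} Should the stated monotonicity and homogeneity require proof, I would argue as follows. For $\mathcal{K}_{n,T}$: $E\subset E'$ implies $\|\cdot\|_{E'}\leq\|\cdot\|_E$ pointwise, and max/min operations preserve pointwise order; also $\|x\|_{\lambda E}=\lambda^{-1}\|x\|_E$. For $\tau_n$: the homogeneity follows from parabolic scaling of the flow, since $x\mapsto\lambda x$, $t\mapsto\lambda^2 t$ maps solutions to solutions. The monotonicity follows from the comparison principle (Theorem \ref{t:comparison principle}) applied to the level set solutions, or equivalently from the avoidance principle for convex flows.
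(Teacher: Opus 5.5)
Your proposal is correct and follows the paper's proof essentially step for step. You sandwich $E$ between $(1-\epsilon)E_\epsilon$ and $(1+\epsilon)E_\epsilon$, use the reverse monotonicity and $(-1)$-homogeneity of $\mathcal{K}_{n,T}$ together with the monotonicity and $2$-homogeneity of $\tau_n$, and multiply the resulting nonnegative bounds; your added sketches of the smooth approximation and of the two structural properties go slightly beyond the paper, which simply asserts them.
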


\begin{proof}

For $E$ as in the statement of the lemma and $\varepsilon>0$ small, there is a centrally symmetric convex body $E_\varepsilon$ with a smooth boundary such that 
\begin{align*}
(1-\varepsilon)E_\varepsilon \subset E \subset (1+\varepsilon) E_\varepsilon.    
\end{align*}
From the reverse monotonicity of $\mathcal{K}_{n,T}$ we have
\begin{align*}
\mathcal{K}_{n,T}((1+\epsilon)E_\varepsilon) \leq \mathcal{K}_{n,T}(E) \leq  \mathcal{K}_{n,T}((1-\varepsilon) E_\varepsilon), 
\end{align*}
and from the homogeneity of $\mathcal{K}_{n,T}$, 
\begin{align*}
  \frac{1}{1+\varepsilon}\mathcal{K}_{n,T}(E_\varepsilon) \leq \mathcal{K}_{n,T}(E) \leq \frac{1}{1-\varepsilon}\mathcal{K}_{n,T}(E_\varepsilon).    
\end{align*}
Likewise, for the $\tau_n$ functional, the inclusions imply that
\begin{align*}
  \tau_n((1-\epsilon)E_\epsilon) \leq \tau_n(E) \leq \tau_n((1+\epsilon)E_\epsilon),  
\end{align*}
and then the homogeneity of $\tau_n$ yields
\begin{align*}
  (1-\epsilon)\sqrt{\tau_n(E_\epsilon)} \leq \sqrt{\tau_n(E)} \leq (1+\epsilon)\sqrt{\tau_n(E_\epsilon)}.
\end{align*}
Multiplying the inequalities for $\mathcal{K}_{n,T}$ and $\tau_n$ the lemma follows.

\end{proof}

\begin{proof}[Proof of Theorem \ref{t:main theorem}]

  \textbf{Part I: Case when $E$ has a smooth boundary.} In terms of the value function $v=v_E$ defined by \eqref{e:value function two player game}, we have the identity
\begin{align*}
  \frac{\mathcal{K}_{n,T}(E)}{\sqrt{T}} = \frac{v(0,-T)}{\sqrt{T}}.    
\end{align*}
Now given that $w_\delta(x,t) = \delta v(x/\delta,t/\delta^2)$ by definition we also have
\begin{align*}
  \frac{\mathcal{K}_{n,T}(E)}{\sqrt{T}}  = \frac{\delta^{-1}w_\delta(0,-\delta^2 T)}{\sqrt{T}}.    
\end{align*}
Choosing $\delta = 1/\sqrt{T}$, this results in
\begin{align*}
  \frac{\mathcal{K}_{n,T}(E)}{\sqrt{T}} = w_{1/\sqrt{T}}(0,-1).    
\end{align*}
Given the smoothness assumption on $E$ we can apply Theorem \ref{t:continuum limit smooth E case} and conclude $w_\delta \to w$ locally uniformly, and in particular $w_{1/\sqrt{T}}(0,-1) \to w(0,-1)$ as $T\to \infty$, where $w(x,t)$ is the unique solution to \eqref{e:curvature flow initial data level set formulation}. This establishes the existence of the limit
\begin{align*}
   \lim_{T\to\infty}\frac{\mathcal{K}_{n,T}(E)}{\sqrt{T}},
\end{align*}
which agrees with $w(0,-1)$. Now we consider the function $\overline w$ solving the terminal value problem
\begin{align*}
  \partial_t \overline w + \tfrac{1}{2}\overline H_n(D^2\overline w,\nabla \overline w) & = 0 \text{ in } \mathbb{R}^m\times \mathbb{R}_-\\
  \overline w(x,0) & = \|x\|_E,
\end{align*}
that is, $\overline w$ is the level set function describing flow by the sum of the largest $\min\{n,m-1\}$ curvatures. Then, according to Theorem \ref{t:convex level sets} (see discussion at the end of Appendix \ref{a:basics of viscosity solutions}) the unique viscosity solution $\overline w$ has the property that $\overline w(\cdot,t)$ has convex sublevel sets for every $t$. In light of this (recall the discussion in Section \ref{s:positive curvatures versus curvatures}) it can be shown that $\overline w$ is also a viscosity solution of 
\begin{align*}
  \partial_t \overline w + \tfrac{1}{2}H_n(D^2\overline w,\nabla \overline w) = 0.
\end{align*}
Then, by the comparison principle for viscosity solutions (see Theorem \ref{t:comparison principle}) we must have $w(x,t) = \overline w(x,t)$ for all $x,t$. In particular (recall Remark \ref{r:rescaling the 1/2 factor}) the level sets of $w(\cdot,2t)$ are evolving according to the sum of the largest $\min\{n,m-1\}$ principal curvatures.

To conclude this first part of the proof it remains to explain why is it that 
\begin{align}\label{e:w at 0 -1 gives the extinction time}
w(0,-1) = \frac{1}{\sqrt{2\tau_n(E)}}.
\end{align}
For this, we find it convenient to argue in terms of the rescaled function $\tilde w(x,t) = w(x,2t)$, whose level sets are evolving by the curvature flow (again, as noted in Remark \ref{r:rescaling the 1/2 factor}). In particular, we see for $t>0$ the value $\tilde w(0,-t)$ is equal that of the level set of $\tilde w(\cdot,0) = \|\cdot\|_E$ with extinction time equal to $t$. Since $E = \{ \tilde w(\cdot,0) \leq 1\}$, it follows the level set vanishing at time $\tau_n(E)$ is the one corresponding to $\{\tilde w(\cdot,0) = 1\}$, that is $\partial E$. It follows that $\tilde w(0,-\tau_n(E)) = 1$. Now, the fact that $w(\cdot,0)$ is homogeneous of degree $1$ means $w(x,t)$ will be self similar, and one has that (see Lemma \ref{l:solution is self similar})
\begin{align*}
  w(x,t) = \sqrt{-t}w(x/\sqrt{-t},-1).    
\end{align*}
 Therefore, 
\begin{align*}
  1= \tilde w(0,-\tau_n(E)) = w(0,-2\tau_n(E)) = \sqrt{2\tau_n(E)}w(0,-1),
\end{align*}
and the identity \eqref{e:w at 0 -1 gives the extinction time} follows. This proves that
\begin{align*}
  \lim\limits_{T\to \infty}\frac{\mathcal{K}_{n,T}(E)}{\sqrt{T}} = \frac{1}{\sqrt{2\tau_n(E)}},    
\end{align*}
in the case when $\partial E$ is smooth. 

 \textbf{Part II: Approximation of general $E$ by smooth convex bodies.} For the general case fix $E$ and $\epsilon\in(0,1)$. Then by Lemma \ref{l:approximation lemma} there is a smooth, centrally symmetric convex body $E_\epsilon$ such that 
    \begin{align*}
      \frac{1-\epsilon}{1+\epsilon} \mathcal{K}_{n,T}(E_\epsilon)\sqrt{\tau_n(E_\epsilon)} \leq \mathcal{K}_{n,T}(E)\sqrt{\tau_n(E)} \leq  \frac{1+\epsilon}{1-\epsilon} \mathcal{K}_{n,T}(E_\epsilon)\sqrt{\tau_n(E_\epsilon)}.    
    \end{align*}
  The convex body $E_\varepsilon$ has a smooth boundary and thus according to Part I, 
  \begin{align*}
    \lim_{T\to \infty}\left \{ \frac{\mathcal{K}_{n,T}(E_\epsilon)}{\sqrt{T}}\sqrt{2\tau_n(E_\epsilon)} \right \} = 1.   
  \end{align*}
  Combining this limit with the inequalities above we conclude that 
  \begin{align*}
   \frac{1-\epsilon}{1+\epsilon}\frac{1}{\sqrt{2\tau_n(E)}}\leq \liminf_{T\to \infty}\frac{\mathcal{K}_{n,T}(E)}{\sqrt{T}}\leq \limsup_{T\to \infty}\frac{\mathcal{K}_{n,T}(E)}{\sqrt{T}} \leq \frac{1+\epsilon}{1-\epsilon}\frac{1}{\sqrt{2\tau_n(E)}}.
  \end{align*}
  Since this holds for all $\epsilon\in(0,1)$ we may take $\epsilon \to 0$ and conclude that
  \begin{align*}
    \lim_{T\to \infty}\frac{\mathcal{K}_{n,T}(E)}{\sqrt{T}} = \frac{1}{\sqrt{2\tau_n(E)}},
  \end{align*}
  and the theorem is proved.
\end{proof}

\section{Proof of Theorem \ref{t:extinction time asymptotics}}\label{s:analysis of the extinction time}

We now establish lower and upper bounds on the extinction time for $E$ under mean curvature flow, in particular, $n\geq m-1$ throughout this section. To obtain the bounds, we will construct adequate subsolutions and supersolutions for the PDE solved by $w$, and then apply the comparison principle in the viscosity framework \cite[Theorem 3.2]{EvansSpruck1991}. Then, the bound for the extinction times follows from \eqref{e:w at 0 -1 gives the extinction time}. As noted previously, the definitions of viscosity solutions and the comparison principle are reviewed in Appendix \ref{a:basics of viscosity solutions}.

Throughout this section we will study the specific terminal value problem
  \begin{align}\label{e:terminal value problem cube sec 7}
    \left \{ \begin{array}{rl}\partial_t w + \tfrac{1}{2}|\nabla w|\dive\left ( \frac{\nabla w}{|\nabla w|}\right ) & = 0 \text{ in } \mathbb{R}^m\times (-\infty,0)\\ 
    w(x,0) & = \|x\|_E \text{ in } \mathbb{R}^m \end{array}\right. 
  \end{align}
  The upper bound will be the simpler of the two estimates, so we discuss the lower bound first.

\subsection{The lower bound} For the lower bound we will show that a solution to the heat equation with a properly chosen diffusivity and terminal data will produce a subsolution to \eqref{e:terminal value problem cube sec 7}. Therefore, we define $\psi(x,t)$ as the solution to the terminal value problem,
\begin{align}\label{e:heat equation factor 1/4}
  \left \{ \begin{array}{rl}
 \partial_t \psi + \tfrac{1}{4} \Delta \psi  & =0  \text{ in } \mathbb{R}^m\times (-\infty,0),\\
  \psi(x,0)  & = C(E) \max_i x_i,  \end{array}\right.
\end{align}
where $C(E)$ is the constant defined by
\begin{align*}
C(E) := 1/\max_i \| e_i\|^*_E.
\end{align*}

\begin{remark} The idea behind the definition of $C(E)$ comes from the case where $E=[-1,1]^m$. Then $\|x\|_E = \|x\|_\infty$ and we have that $\|x\|_\infty \geq \max x_i$, so $C_E = 1$ in this case.     
\end{remark}

The initial data $g(x) := C(E) \max_i x_i$ has a few properties that will be useful going forward. First, $g(x)$ is convex and monotone increasing in each variable $x_i$, this means that if $p$ is any element of its sub-differential, then the components of $p$ are all non-negative.  

Next, $g(x)$ is a \emph{submodular} function,  which means that  if $e_1,\ldots,e_m$ denotes the canonical basis, then for any $h>0$ and indices $i\neq j$ we have that
\begin{align*}
  g(x+he_i+he_j)+g(x)-g(x+he_i)-g(x+he_j) \leq 0 \;\forall \;x \in \mathbb{R}^m.   
\end{align*}
Observe that a twice differentiable function is submodular if and only if the mixed second derivatives are all non-positive. We also observe $g(x)$ is linear along the direction of the diagonal. In other words, if $\mathbf{1} := (1,\ldots,1) \in\mathbb{R}^m$, then  for every $x \in \mathbb{R}^m$ and $s\in \mathbb{R}$ we have
\begin{align*}
  g(x+s\mathbf{1}) = g(x) + C(E) s.    
\end{align*}
These are all properties preserved by translations and convex combinations, and thus they are preserved by convolution against a non-negative smooth kernel. Since $\psi(x,t)$ is obtained by convolution of $g(x)$ against the heat kernel, the above applies to $\psi(x,t)$ for $t<0$. These properties of $\psi(x,t)$ for $t<0$ say that $-D^2\psi(x,t)$ is what is known as a Laplacian matrix.

\begin{defn}\label{def:Laplacian matrix}
We will say a $L\in \R^{m\times m}$ is a Laplacian matrix if  it is a symmetric matrix such that\footnote{As a reminder in our sign convention Laplacian is negative definite matrix. See  footnote \ref{fn:laplacian} in Section \ref{s:heuristics and overview of the proof} regarding this convention.}
\begin{align*}
  L_{ij} \geq 0 \text{ if } i\neq j\; \text{and } \sum_{j=1}^mL_{ij} = 0 \;\forall \;i \in [m].
\end{align*}

\end{defn}

This property  yields a useful pointwise bound for $(D^2\psi \nabla \psi,\nabla \psi)$.

\begin{lemma}\label{l:submodularity infinity Laplacian bound}
If $L\in \R^{m\times m}$ is a Laplacian matrix and $p$ is a vector in $\R^m$ with $p_i\geq 0$ for each $i \in [m]$, then 
\begin{align*}
 -(Lp,p) \leq \max_i \{-L_{ii}\}|p|^2.
\end{align*}
\end{lemma}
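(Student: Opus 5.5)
The plan is to rewrite the quadratic form $-(Lp,p)$ via the standard edge decomposition of a Laplacian matrix. Since each row of $L$ sums to zero, $L_{ii} = -\sum_{j\neq i} L_{ij}$, and a direct expansion gives
\begin{align*}
  -(Lp,p) = \sum_{i<j} L_{ij}(p_i-p_j)^2 .
\end{align*}
To check this identity, expand $-\sum_{i,j}L_{ij}p_ip_j = \sum_i \sum_{j\neq i} L_{ij}p_i^2 - \sum_{i\neq j}L_{ij}p_ip_j$. Then symmetrize using $L_{ij}=L_{ji}$.

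Next we use the sign hypothesis on $p$. All $p_i\geq 0$, so $(p_i-p_j)^2 \leq p_i^2+p_j^2$, because the discarded cross term is $2p_ip_j\geq 0$. Together with $L_{ij}\geq 0$ for $i\neq j$, this gives
\begin{align*}
  -(Lp,p) \leq \sum_{i<j} L_{ij}(p_i^2+p_j^2) = \sum_i p_i^2 \sum_{j\neq i} L_{ij} = \sum_i (-L_{ii})\,p_i^2 .
\end{align*}
Finally, each $-L_{ii}\geq 0$, so the last sum is at most $\max_i\{-L_{ii}\}\sum_i p_i^2 = \max_i\{-L_{ii}\}|p|^2$, which is the claim.

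There is no real obstacle here. The only point needing care is the step where nonnegativity of $p$ enters, namely bounding $(p_i-p_j)^2$ by $p_i^2+p_j^2$. Without it the constant would be $2\max_i\{-L_{ii}\}$ (via $(a-b)^2\le 2a^2+2b^2$), and that factor of $2$ would spoil the diffusivity $\tfrac14$ chosen for $\psi$ in \eqref{e:heat equation factor 1/4}. We also need to make sure the sign conventions of Definition \ref{def:Laplacian matrix} are tracked correctly, so that the diagonal entries are nonpositive and the off-diagonal weights nonnegative.
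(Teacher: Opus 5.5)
Your argument is correct and is essentially the paper's proof. The paper expands $-(Lp,p) = -\sum_i L_{ii}p_i^2 - \sum_{i\neq j}L_{ij}p_ip_j$ and discards the second sum, which is $\le 0$ because $L_{ij}\ge 0$ and $p_i,p_j\ge 0$; your step $(p_i-p_j)^2\le p_i^2+p_j^2$ discards exactly these cross terms. Your detour through $\sum_{i<j}L_{ij}(p_i-p_j)^2$ uses the zero-row-sum condition once to get there and once to return to $\sum_i(-L_{ii})p_i^2$, so the two uses cancel. The direct expansion shows that only the sign conditions on the off-diagonal entries and on $p$ are needed.
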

\begin{proof}
Since $(Lp,p) = \sum_{ij}L_{ij}p_ip_j$ we use the fact that $p_i\geq 0$ and that $L$ is a Laplacian matrix to obtain
\begin{align*}
  -(Lp,p) = -\sum_{i}L_{ii}p_i^2-\sum_i\sum_{j\neq i}L_{ij}p_ip_j & \leq \max_i\{-L_{ii}\}|p|^2.  
\end{align*}

\end{proof}

\begin{lemma}\label{l:subsolution via heat equation}
  If $\psi$ is the solution to \eqref{e:heat equation factor 1/4} and $w$ the solution to \eqref{e:terminal value problem cube sec 7}, then
  \begin{align*}
    \psi(x,t) \leq w(x,t) \;\forall\;x\in\mathbb{R}^m,\; t<0.    
  \end{align*}
\end{lemma}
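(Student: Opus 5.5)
The plan is to show that $\psi$ is a classical (hence viscosity) subsolution of \eqref{e:terminal value problem cube sec 7} whose terminal data lies below $\|x\|_E$, and then to invoke the comparison principle (Theorem \ref{t:comparison principle}). The argument has three steps.

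\textbf{Step 1: ordering of the terminal data.} For every $x$ and every $i$ we have $x_i=(e_i,x)\le \|e_i\|^*_E\|x\|_E\le \max_j\|e_j\|^*_E\,\|x\|_E$. Hence
\begin{align*}
\psi(x,0)=C(E)\max_i x_i\le \|x\|_E=w(x,0).
\end{align*}
Growth is also under control. The function $\psi$ is Lipschitz in $x$, and $\psi(x,t)\le \psi(x,0)+C\sqrt{|t|}$ by properties of the heat kernel. So $\psi$ falls within the growth class required by the comparison principle, as $w$ does.

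\textbf{Step 2: differential inequality for $t<0$.} Here $\psi$ is smooth, and the structural facts recorded above apply: $\nabla\psi$ has nonnegative entries, and $L:=D^2\psi$ has nonpositive off-diagonal entries and zero row sums. With the paper's sign convention ($L_{ij}\ge 0$ for $i\ne j$, rows summing to zero), $-D^2\psi$ is a Laplacian matrix, so Lemma \ref{l:submodularity infinity Laplacian bound} applies to it.

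At points where $\nabla\psi\neq0$ we compute, with $p=\nabla\psi$,
\begin{align*}
|p|\dive\Big(\tfrac{p}{|p|}\Big)=\Delta\psi-\frac{(D^2\psi\, p,p)}{|p|^2}.
\end{align*}
Lemma \ref{l:submodularity infinity Laplacian bound}, applied to the Laplacian matrix $-D^2\psi$, gives
\begin{align*}
(D^2\psi\,p,p)\le \max_i \partial_{ii}\psi\,|p|^2.
\end{align*}
Because each row of $D^2\psi$ sums to zero and its off-diagonal entries are nonpositive, we have $\partial_{ii}\psi=\sum_{j\ne i}|\partial_{ij}\psi|\le \sum_{j\ne i}|\partial_{jj}\psi|$. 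This only gives diagonal dominance, so instead I would use the cruder bound $\max_i\partial_{ii}\psi\le \sum_i\partial_{ii}\psi=\Delta\psi$, valid since all $\partial_{ii}\psi\ge0$. That yields $|p|\dive(p/|p|)\ge 0$, which is too weak.

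The better route is $\max_i \partial_{ii}\psi\le \tfrac12\Delta\psi$. This holds because the rows sum to zero with nonpositive off-diagonals: for the maximizing $i$, $\partial_{ii}\psi=\sum_{j\ne i}(-\partial_{ij}\psi)$, and by symmetry $(-\partial_{ij}\psi)=(-\partial_{ji}\psi)\le \partial_{jj}\psi$. Summing over $j\ne i$ gives $\partial_{ii}\psi\le\sum_{j\ne i}\partial_{jj}\psi$, so $2\partial_{ii}\psi\le\Delta\psi$. Then
\begin{align*}
|p|\dive\Big(\tfrac{p}{|p|}\Big)\ge \Delta\psi-\tfrac12\Delta\psi=\tfrac12\Delta\psi,
\end{align*}
and therefore
\begin{align*}
\partial_t\psi+\tfrac12|p|\dive\Big(\tfrac{p}{|p|}\Big)\ge \partial_t\psi+\tfrac14\Delta\psi=0.
\end{align*}
This matching of constants is what explains the diffusivity $\tfrac14$ in \eqref{e:heat equation factor 1/4}, and I expect this inequality to be the crux of the proof.

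\textbf{Step 3: gradient-zero points and conclusion.} The subsolution test uses $(H_n)^*$. In fact $\nabla\psi\neq0$ everywhere, since $(\nabla\psi,\mathbf 1)=C(E)>0$ by linearity along the diagonal, so no degenerate case arises. Since $\psi$ is $C^2$ for $t<0$, the classical inequality implies the viscosity subsolution property. The comparison principle then yields $\psi\le w$.

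The main obstacle is getting the sharp factor $\tfrac12$ in Step 2. If the symmetry argument fails to give it, I would fall back on the eigenvalue form: for $p\neq 0$, $H_n=\tr(\Pi D^2\psi\Pi)$, and I would bound the top Rayleigh quotient along $\hat p$ using the Laplacian structure instead.
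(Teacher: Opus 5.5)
Your proposal is correct and follows essentially the same route as the paper. The ingredients match: the heat-equation barrier with diffusivity $\tfrac14$, Lemma \ref{l:submodularity infinity Laplacian bound} applied to the Laplacian matrix $-D^2\psi$, the key bound $\max_k\partial_{kk}\psi\le\tfrac12\Delta\psi$ (your termwise estimate $-\partial_{ij}\psi\le\partial_{jj}\psi$ is just a repackaging of the paper's double-sum identity), $\nabla\psi\neq0$ via $(\nabla\psi,\mathbf 1)=C(E)$, the dual-norm ordering of the terminal data, and the comparison principle. The detour in Step 2 about ``diagonal dominance'' and the cruder bound is unnecessary, since the inequality $\partial_{ii}\psi\le\sum_{j\ne i}\partial_{jj}\psi$ you wrote there is already exactly what you need, and the fallback plan is never required.
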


\begin{proof} 

As observed above $-D^2\psi(x,t)$ is a Laplacian matrix for every $x\in\mathbb{R}^m$ and $t<0$. Likewise, $\psi(x,t)$ is monotone in each variable $x_i$, and so $\partial_k \psi \geq 0$ for every $k$ and $\psi(x,t)$ is linear in the direction of the vector of all ones, and therefore  $\sum_k\partial_k \psi = C(E)$. Therefore,  $\nabla \psi \neq 0$ everywhere for $t<0$. In this case, we may apply Lemma \ref{l:submodularity infinity Laplacian bound} with $L= -D^2\psi$ and $p = \nabla \psi$ to obtain
\begin{align*}
  \frac{1}{|\nabla \psi|^2}(D^2\psi \nabla \psi,\nabla \psi) \leq \max_{k} \{ \partial_{kk}\psi \},\;\text{ for } t<0.
\end{align*}
 In particular, for all $x$ and for all $t<0$ we have
\begin{align}
  \Delta \psi - \frac{1}{|\nabla \psi|^2}(D^2\psi \nabla \psi,\nabla \psi) \geq \Delta \psi - \max_k\{ \partial_{kk}\psi\}.    \label{eq:mcf_bound}
\end{align}
On the other hand, using again that $-D^2 \psi$ is a Laplacian matrix, and therefore $\partial_{\ell j}\psi \leq 0$ for $\ell \neq j$, we have  
\begin{align*}
  \partial_{kk}\psi = \Delta \psi -\partial_{kk}\psi + \sum \limits_{\ell \neq k}\sum_{j\neq k,\ell} \partial_{\ell j}\psi \leq \Delta \psi-\partial_{kk}\psi. 
\end{align*}
In particular, since $\partial_{kk} \psi \geq 0$, the preceding inequality guarantees
\begin{align*}
  \Delta  \psi = \Delta \psi-\partial_{kk}\psi + \partial_{kk}\psi \leq 2 (\Delta \psi - \partial_{kk}\psi).    
\end{align*}
As this holds for every $k$, we have
\begin{align}
  \frac{1}{2}\Delta \psi \leq \Delta \psi - \max_k \partial_{kk}\psi  
  \label{eq:trace_bound}
\end{align}
Now, we put the above inequalities together: the standard identity for the MCF operator and the inequality  \eqref{eq:mcf_bound} provide that
\begin{align*}
  \partial_t\psi + \tfrac{1}{2}|\nabla \psi|\dive\left ( \frac{\nabla \psi}{|\nabla \psi|} \right ) =\partial_t\psi + \tfrac{1}{2}\Big( \Delta \psi - \frac{1}{|\nabla \psi|^2}(D^2\psi \nabla \psi,\nabla \psi) \Big)\geq \partial_t\psi  + \tfrac{1}{2}\left ( \Delta \psi -\max_k \partial_{kk}\psi\right ),
\end{align*}
and then the inequality \eqref{eq:trace_bound} yields
\begin{align*}
  \partial_t\psi + \tfrac{1}{2}|\nabla \psi|\dive\left ( \frac{\nabla \psi}{|\nabla \psi|} \right ) \geq \partial_t\psi  + \tfrac{1}{4}\Delta \psi = 0.
\end{align*}    
Finally, we obtain
\[
C(E) \max_i x_i = \psi(x,0)  \leq \|x\|_E
\] from the following inequality based on definition of the dual norm (see Section \ref{sec:notation}): 
\begin{align*}
\max_i x_i = \|x\|_E \max_i ( e_i, x/\|x\|_E ) \leq \|x\|_E \max_i \|e_i\|^*_E.
\end{align*}
We conclude that $\psi$ is a subsolution of \eqref{e:terminal value problem cube sec 7}, and thus $\psi \leq w$ by the comparison principle (Theorem \ref{t:comparison principle}).

\end{proof}

\begin{theorem}
    
The function $w(x,t)$ solving the problem \eqref{e:terminal value problem cube sec 7} satisfies the following bound for every $T>0$
\begin{align*}
C(E) \, \mathbb E[ \max _i G_i] \sqrt{T/2}\,   \leq  w(0,-T).
\end{align*} where $G$ denotes an $m$-dimensional standard Gaussian random vector with mean $0$ and identity covariance. 
\end{theorem}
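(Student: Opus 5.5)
The bound follows from Lemma \ref{l:subsolution via heat equation} once the subsolution $\psi$ is evaluated explicitly at $(0,-T)$. By that lemma, $\psi(x,t)\leq w(x,t)$ for all $x$ and $t<0$, where $\psi$ solves \eqref{e:heat equation factor 1/4}. It therefore suffices to compute $\psi(0,-T)$ and check that it equals $C(E)\,\mathbb{E}[\max_i G_i]\sqrt{T/2}$.

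\textbf{Step 1: representation formula.} The backward equation $\partial_t\psi+\tfrac14\Delta\psi=0$ with terminal data $g$ at $t=0$ is solved by convolving $g$ with a Gaussian. For a function $\Phi(x,s)=\mathbb{E}[g(x+\sigma(s) G)]$ we have $\partial_s\Phi=\tfrac12\sigma\sigma'\,\Delta\Phi$. With $s=-t$, the requirement $\partial_s\Phi = \tfrac14\Delta\Phi$ means $(\sigma^2)'=\tfrac12$, so $\sigma^2(s)=s/2$. Hence
\begin{align*}
  \psi(x,t) = \mathbb{E}\Big[g\big(x+\sqrt{-t/2}\,G\big)\Big],\qquad g(x)=C(E)\max_i x_i,
\end{align*}
which is the heat-kernel convolution already used in the discussion preceding Definition \ref{def:Laplacian matrix}.

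If one worries about uniqueness, note that $g$ is Lipschitz. The paper speaks of ``the unique solution'', and among solutions of at most linear growth this Gaussian convolution is that solution, so no further argument is needed.

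\textbf{Step 2: evaluate.} At $x=0$ and $t=-T$, the positive $1$-homogeneity of $\max_i$ gives
\begin{align*}
  \psi(0,-T)=C(E)\,\mathbb{E}\big[\max_i \sqrt{T/2}\,G_i\big]=C(E)\sqrt{T/2}\;\mathbb{E}[\max_i G_i].
\end{align*}
Combining this with $\psi(0,-T)\leq w(0,-T)$ gives the claimed inequality.

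The only delicate point is getting the variance normalization right, namely that diffusivity $\tfrac14$ yields variance $T/2$ rather than $T$ or $2T$. The substantive work, the subsolution property and the comparison principle, has already been done in Lemma \ref{l:subsolution via heat equation}.
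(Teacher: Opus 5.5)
Your proposal is correct and follows the paper's proof: Lemma \ref{l:subsolution via heat equation} gives $\psi(0,-T)\le w(0,-T)$, and $\psi(0,-T)$ is computed from the Gaussian representation of the diffusivity-$\tfrac14$ heat equation (variance $T/2$ per coordinate) together with the positive homogeneity of $\max_i$. Your explicit derivation of the variance normalization, which the paper only asserts, is correct.
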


\begin{proof}

From Lemma \ref{l:subsolution via heat equation}, we have that 
\begin{align*}
  \psi(0,-T) \leq w(0,-T).
\end{align*}
and since $\psi$ is a solution of the heat equation with diffusion constant $1/4$, we have 
\[
\psi(0,-T) =C(E) \mathbb E [\max _i G_i ]\, \sqrt{T/2}.
\]

\end{proof}

\subsection{The upper bound} As we mentioned earlier in the section, the corresponding upper bound for $w$ is more straightforward than the lower bound. In this case, we simply make use of the fact that $\overline{\psi}(x,t)$ solving the backwards heat equation with terminal data $\|x\|_E$ serves as a supersolution to \eqref{e:terminal value problem cube sec 7}. The value of $\overline{\psi}(x,t)$ is then the expected norm of a Gaussian vector.

We start with the supersolution property for $\overline{\psi}$. For $t>0$ and $x\in\mathbb{R}^m$ we define $\overline{\psi}$ by
\begin{align*}
  \overline{\psi}(x,-t) := \mathbb{E}[\|x+X_t\|_E],    
\end{align*}
where $X_t$ is the standard $m$-dimensional Brownian motion starting from $X_0 = 0$. In particular, $\overline{\psi}$ solves
\begin{align*}
  \partial_t \overline{\psi} + \tfrac{1}{2}\Delta \overline{\psi} & = 0 \text{ in } \mathbb{R}^m\times (-\infty,0),\\
  \overline{\psi}(x,0) & = \|x\|_E.
\end{align*}

\begin{proposition}\label{p:heat equation is a supersolution to MCF level set eqn}
  If $w(x,t)$ is the solution of \eqref{e:terminal value problem cube sec 7} and $\overline{\psi}$ is as defined above, then for every $t>0$ we have
  \begin{align*}
    w(x,-t) \leq \overline{\psi}(x,-t).
  \end{align*}
\end{proposition}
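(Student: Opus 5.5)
The plan is to show that $\overline{\psi}$ is a viscosity supersolution of \eqref{e:terminal value problem cube sec 7} with the correct terminal data, and then invoke the comparison principle (Theorem \ref{t:comparison principle}) to get $w\leq \overline\psi$. Since $\overline\psi(x,0)=\|x\|_E$, the terminal values agree. Moreover, $\overline\psi(x,-t)\leq \|x\|_E+\sqrt{t}\,\mathbb{E}\|G\|_E$, so $\overline\psi$ has the same linear growth as the other functions already handled in the comparison framework.

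\textbf{Regularity and convexity.} For $t>0$, $\overline\psi(\cdot,-t)$ is the convolution of the convex, 1-Lipschitz function $\|\cdot\|_E$ with a Gaussian kernel. It is therefore $C^\infty$ in $x$ (and smooth in $t$), and convex as an average of translates of a convex function. Thus $D^2\overline\psi\geq 0$ for $t>0$.

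\textbf{Classical supersolution inequality where $\nabla\overline\psi\neq 0$.} At such points, $D^2\overline\psi\geq 0$ gives $(D^2\overline\psi\,\hat p,\hat p)\geq 0$ with $\hat p=\nabla\overline\psi/|\nabla\overline\psi|$. Hence
\begin{align*}
|\nabla\overline\psi|\,\dive\Big(\frac{\nabla\overline\psi}{|\nabla\overline\psi|}\Big)=\Delta\overline\psi-(D^2\overline\psi\,\hat p,\hat p)\leq \Delta\overline\psi .
\end{align*}
Combined with $\partial_t\overline\psi+\tfrac12\Delta\overline\psi=0$, this yields $\partial_t\overline\psi+\tfrac12|\nabla\overline\psi|\dive(\nabla\overline\psi/|\nabla\overline\psi|)\leq 0$.

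\textbf{Points where $\nabla\overline\psi=0$.} Here one must verify the viscosity definition using the lower envelope $(H_n)_*(M,0)=\min_{\sigma}\tr(\Pi(\sigma)M\Pi(\sigma))_+$, as in Lemma \ref{l:H n is sum of positive eigenvalues}. At such a point, a smooth test function $\phi$ touching $\overline\psi$ from below has $\nabla\phi=\nabla\overline\psi=0$, $\partial_t\phi=\partial_t\overline\psi$, and $D^2\phi\leq D^2\overline\psi$. Then
\begin{align*}
(H_n)_*(D^2\phi,0)\leq \min_\sigma \tr\big((\Pi(\sigma)D^2\overline\psi\,\Pi(\sigma))_+\big)\leq \tr(D^2\overline\psi)=\Delta\overline\psi,
\end{align*}
using $D^2\overline\psi\geq0$, so the positive part is harmless and the projection only reduces the trace. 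The supersolution inequality $\partial_t\phi+\tfrac12(H_n)_*(D^2\phi,\nabla\phi)\leq 0$ follows. By central symmetry this case occurs, for instance, along $x=0$. Since $\overline\psi$ is smooth for $t>0$, the viscosity verification reduces to these pointwise inequalities, with test functions from below having matching first derivatives and smaller Hessians.

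I expect the main obstacle to be precisely this degenerate-gradient step: it is where the definition of viscosity supersolution via the lower semicontinuous envelope must be checked carefully, and where monotonicity of $(H_n)_*$ in $M$ is needed. The time derivative near $t=0$ is handled by restricting to $t<0$, which is where the PDE is posed; continuity up to $t=0$ follows from the Lipschitz bound.
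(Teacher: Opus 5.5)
Your proposal is correct and follows the paper's proof essentially step for step. Both arguments use convexity of $\overline\psi$ from the heat-kernel convolution, the inequality $|\nabla\overline\psi|\dive(\nabla\overline\psi/|\nabla\overline\psi|)\leq\Delta\overline\psi$ where the gradient is nonzero, the bound $(H_n)_*(D^2\overline\psi,0)\leq\Delta\overline\psi$ via $\tr(\Pi(\sigma)D^2\overline\psi\,\Pi(\sigma))$ where it vanishes, and then the comparison principle. Your explicit use of the monotonicity of $(H_n)_*$ in $M$ for test functions touching from below, and your check of the growth and terminal-continuity hypotheses, simply make explicit steps that the paper leaves implicit.
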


\begin{proof}

As a solution of the heat equation $\overline{\psi}(\cdot,-t)$ is given by the convolution of $\|x\|_E$ with the heat kernel at time $t$. In particular, $\overline{\psi}(\cdot,-t)$ is twice differentiable, and as $\|x\|_E$ is convex, so will $\overline{\psi}(\cdot,-t)$. 

Now, take a point $(x_0,t_0)$ with $t_0<0$ such that $\nabla \overline{\psi}(x_0,t_0) \neq 0$. Then at $(x_0,t_0)$ we have 
\begin{align*}
  \Delta \overline{\psi} & = |\nabla \overline{\psi}|\dive \left (\frac{\nabla \overline{\psi}}{|\nabla \overline{\psi}|} \right ) + \frac{1}{|\nabla \overline{\psi}|^2}((D^2\overline{\psi})\nabla \overline{\psi},\nabla \overline{\psi}) \geq |\nabla \overline{\psi}|\dive \left (\frac{\nabla \overline{\psi}}{|\nabla \overline{\psi}|} \right ),
\end{align*}
thanks to $((D^2\overline{\psi})\nabla \overline{\psi},\nabla \overline{\psi})\geq 0$ everywhere. This shows that
\begin{align*}
  \partial_t \overline{\psi} +\tfrac{1}{2}|\nabla \overline{\psi}|\dive\left ( \frac{\nabla \overline{\psi}}{|\nabla \overline{\psi}|} \right) \leq \partial_t \overline{\psi} + \tfrac{1}{2} \Delta \overline{\psi} = 0, \text{ in } \{ \nabla \overline{\psi}\neq 0\}.
\end{align*}
If $\nabla \overline{\psi}(x_0,t_0) = 0$, then at $(x_0,t_0)$ we have (recall $(H_n)_*$ from Lemma \ref{l:liminf and limsup for H delta n} and Remark \ref{r:half relaxed limits H n delta})
\begin{align*}
  (H_n)_*(D^2\overline{\psi},0) = \min_{\sigma\in\mathbb{S}^{m-1}} H_n(D^2\overline \psi,\sigma),
\end{align*}
Since $n\geq m-1$ and for any $\sigma \in \mathbb{S}^{m-1}$ and $M$ the matrix $\Pi(\sigma)M\Pi(\sigma)$ has at least zero as an eigenvalue, we have $H_n(M,\sigma) = \tr((\Pi(\sigma)M\Pi(\sigma))_+)$, and since $D^2\psi \geq 0$ we have $\Pi(\sigma)D^2\overline \psi\Pi(\sigma)\geq 0$. It follows that for
\begin{align*}
  H_n(D^2\overline\psi,\sigma) = \tr(\Pi(\sigma)D^2\overline \psi\Pi(\sigma)) = \Delta \overline \psi - ((D^2\overline \psi) \sigma,\sigma) \leq \Delta \overline \psi,
\end{align*}
for any $\sigma \in \mathbb{S}^{m-1}$. In particular, $(H_n)_*(D^2\overline \psi,0) \leq \Delta \overline \psi$. Therefore, at $(x_0,t_0)$ we have also
\begin{align*}
  \partial_t \overline{\psi} + \tfrac{1}{2}(H_n)_*(D^2\overline{\psi},0) \leq \partial_t \overline{\psi} +\tfrac{1}{2} \Delta \overline{\psi} = 0.  
\end{align*}
We conclude $\overline{\psi}$ is a supersolution for the problem solved by $w$ (we already have  $\overline{\psi} = w$ at $t=0$ by definition). Therefore, we may apply the comparison principle (Theorem \ref{t:comparison principle}) and conclude that $w(x,t)\leq \overline{\psi}(x,t)$ for all $x\in\mathbb{R}^m$ and $t<0$.

\end{proof}
Thus, Proposition \ref{p:heat equation is a supersolution to MCF level set eqn} provides the desired upper bound, 
the expected norm of an $m$-dimensional normal random variable is well understood: 
\begin{align*}
w(0,-T) \leq  \overline{\psi}(0, -T) =  \mathbb{E}\|X_t\|_E  = \sqrt {T}\, \mathbb{E}\|G\|_E
\end{align*} where $G$ denotes an $m$-dimensional standard Gaussian random vector with mean $0$ and identity covariance.

\subsection*{Acknowledgements} We honor the memory of Robert Kohn whose constant encouragement and generous sharing of ideas made this project possible.  V.A.K.  expresses his profound appreciation to Afonso Bandeira for the introduction to the Koml\'os conjecture, which was the initial spark for this project. V.A.K. is immensely grateful to his postdoctoral mentor, Maria Han Veiga, for her guidance and collaboration throughout his time at The Ohio State University. N.G. is indebted to Deane Yang and Shay Sadovsky for many helpful and interesting discussions. N.G. would like to acknowledge the generous support of the National Science Foundation through grant DMS-214423.

\bibliographystyle{plain}
\bibliography{Komlos_MCF}

@article{BKMZ24,
title = {On the concentration of {G}aussian {C}ayley matrices},
journal = {Applied and Computational Harmonic Analysis},
volume = {73},
pages = {101694},
year = {2024},
issn = {1063-5203},
doi = {https://doi.org/10.1016/j.acha.2024.101694},
url = {https://www.sciencedirect.com/science/article/pii/S106352032400071X},
author = {Afonso S. Bandeira and Dmitriy Kunisky and Dustin G. Mixon and Xinmeng Zeng}
}

@misc{calder24,
  author    = {Jeff Calder},
  title     = {Lecture notes on viscosity solutions}, 
  note = {Available at \url{https://www-users.cse.umn.edu/~jwcalder/viscosity_solutions.pdf}},
  year  = {2024}
}

@misc{silvestre15,
  author    = {Luis Silvestre},
  title     = {Viscosity solutions of elliptic equations}, 
  note = {Available at \url{https://www.math.uchicago.edu/~luis/preprints/viscosity-solutions.pdf}},
  year  = {2015}
}

@Inbook{GL26,
author="Yoshikazu Giga and Qing Liu",
series="Monographs in Mathematics",
title="Surface Evolution Equations:
A Level Set Approach",
year="2026",
publisher="Birkhäuser Cham",
isbn="978-3-032-20302-1",
edition={2}
}

@misc{bandeira2026matrixdiscrepancyrepresentationsfinite,
      title={Matrix Discrepancy for Representations of Finite Groups}, 
      author={Afonso S. Bandeira and Helmut Bölcskei},
      year={2026},
      eprint={2606.12181},
      archivePrefix={arXiv},
      primaryClass={math.PR},
      url={https://arxiv.org/abs/2606.12181}, 
}

@article{kohn2010,
author = {Robert V. Kohn and Sylvia Serfaty},
year = {2010},
pages = {1298--1350},
title = {A Deterministic-Control-Based Approach to Fully Nonlinear Parabolic and Elliptic Equations},
volume = {63},
journal = {Communications on Pure and Applied Mathematics},
}

@article{Spencer86,
title = {Balancing vectors in the max norm},
journal = {Combinatorica},
volume = {6},
issue ={1},
number = {1},
pages = {55-65},
year = {1986},
issn =  {1439-6912},
doi = {10.1007/BF02579409},
url = {https://doi.org/10.1007/BF02579409},
author = {Joel Spencer},
}

@article{MNT20,
    author = {Matoušek, Jiří and Nikolov, Aleksandar and Talwar, Kunal},
    title = {Factorization Norms and Hereditary Discrepancy},
    journal = {International Mathematics Research Notices},
    volume = {2020},
    number = {3},
    pages = {751-780},
    year = {2020},
    month = {02},
    issn = {1073-7928},
    doi = {10.1093/imrn/rny033},
    url = {https://doi.org/10.1093/imrn/rny033},
    eprint = {https://academic.oup.com/imrn/article-pdf/2020/3/751/32219164/rny033.pdf}
}

@InProceedings{BHMSS22,
  author =	{Bansal, Nikhil and Jiang, Haotian and Meka, Raghu and Singla, Sahil and Sinha, Makrand},
  title =	{{Prefix Discrepancy, Smoothed Analysis, and Combinatorial Vector Balancing}},
  booktitle =	{13th Innovations in Theoretical Computer Science Conference (ITCS 2022)},
  pages =	{13:1--13:22},
  series =	{Leibniz International Proceedings in Informatics (LIPIcs)},
  ISBN =	{978-3-95977-217-4},
  ISSN =	{1868-8969},
  year =	{2022},
  volume =	{215},
  editor =	{Braverman, Mark},
  publisher =	{Schloss Dagstuhl -- Leibniz-Zentrum f{\"u}r Informatik},
  address =	{Dagstuhl, Germany},
  URL =		{https://drops.dagstuhl.de/entities/document/10.4230/LIPIcs.ITCS.2022.13},
  URN =		{urn:nbn:de:0030-drops-156092},
  doi =		{10.4230/LIPIcs.ITCS.2022.13}
}

@article{GMRR26,
    title      = {A two-player zero-sum probabilistic game that approximates the mean curvature flow},
    author     = {Irene Gonzalvez and Alfredo Miranda and Julio D. Rossi and Jorge Ruiz-Cases},
    url        = {https://cm.episciences.org/15764},
    doi        = {10.46298/cm.15764},
    journal    = {Communications in Mathematics},
    issn       = {2336-1298},
    volume     = {Volume 34 (2026), Issue 2 (Special issue: Latin American mathematics)},
    eid        = 1,
    year       = {2025},
    month      = {Jul},
}

@article{IS11,
title = {Repeated games for
  non-linear parabolic integro-differential equations and integral
  curvature flows},
journal = {Discrete and Continuous Dynamical Systems},
volume = {29},
number = {4},
pages = {1517-1552},
year = {2011},
issn = {1078-0947},
doi = {10.3934/dcds.2011.29.1517},
url = {https://www.aimsciences.org/article/id/0bb94f94-7d20-4793-a2ca-823f09f1c7db},
author = {Cyril Imbert and Sylvia Serfaty}
}

@article {GuiSch2019,
    AUTHOR = {Guillen, Nestor and Schwab, Russell W.},
     TITLE = {Min-max formulas for nonlocal elliptic operators},
   JOURNAL = {Calc. Var. Partial Differential Equations},
  FJOURNAL = {Calculus of Variations and Partial Differential Equations},
    VOLUME = {58},
      YEAR = {2019},
    NUMBER = {6},
     PAGES = {Paper No. 209, 79},
      ISSN = {0944-2669,1432-0835},
   MRCLASS = {49N70 (35R09 45K05 47G20 49L25)},
  MRNUMBER = {4037461},
MRREVIEWER = {Rodica\ Luca},
       DOI = {10.1007/s00526-019-1631-z},
       URL = {https://doi-org.libproxy.txstate.edu/10.1007/s00526-019-1631-z},
}

@article {GuiSch2020,
    AUTHOR = {Guillen, Nestor and Schwab, Russell W.},
     TITLE = {Min-max formulas for nonlocal elliptic operators on
              {E}uclidean space},
   JOURNAL = {Nonlinear Anal.},
  FJOURNAL = {Nonlinear Analysis. Theory, Methods \& Applications. An
              International Multidisciplinary Journal},
    VOLUME = {193},
      YEAR = {2020},
     PAGES = {111468, 51},
      ISSN = {0362-546X,1873-5215},
   MRCLASS = {35R09 (35J60 45K05 46T99 47G20 49L25 49N70)},
  MRNUMBER = {4062972},
       DOI = {10.1016/j.na.2019.02.021},
       URL = {https://doi-org.libproxy.txstate.edu/10.1016/j.na.2019.02.021},
}

@misc{SV26,
      title={Discrepancy and {F}isher information}, 
      author={Gleb Smirnov and Roman Vershynin},
      year={2026},
      eprint={2605.13107},
      archivePrefix={arXiv},
      primaryClass={math.PR},
      url={https://arxiv.org/abs/2605.13107}, 
}

@inproceedings{NTZ13,
author = {Nikolov, Aleksandar and Talwar, Kunal and Zhang, Li},
title = {The geometry of differential privacy: the sparse and approximate cases},
year = {2013},
isbn = {9781450320290},
publisher = {Association for Computing Machinery},
address = {New York, NY, USA},
url = {https://doi.org/10.1145/2488608.2488652},
doi = {10.1145/2488608.2488652},
booktitle = {Proceedings of the Forty-Fifth Annual ACM Symposium on Theory of Computing},
pages = {351–360},
numpages = {10},
location = {Palo Alto, California, USA},
series = {STOC '13}
}

@misc{FJLN26,
      title={The Mean-Field Limit of Online Stochastic Vector Balancing}, 
      author={Christian Fiedler and Joe Jackson and Daniel Lacker and Jonathan Niles-Weed},
      year={2026},
      eprint={2605.14149},
      archivePrefix={arXiv},
      primaryClass={math.PR},
      url={https://arxiv.org/abs/2605.14149}, 
}

@misc{JKS19,
      title={Online Geometric Discrepancy for Stochastic Arrivals with Applications to Envy Minimization}, 
      author={Haotian Jiang and Janardhan Kulkarni and Sahil Singla},
      year={2019},
      eprint={1910.01073},
      archivePrefix={arXiv},
      primaryClass={cs.DS},
      url={https://arxiv.org/abs/1910.01073}, 
}

@inproceedings{BJMSS21,
author = {Nikhil Bansal and Haotian Jiang and Raghu Meka and Sahil Singla and Makrand Sinha},
title = {Online Discrepancy Minimization for Stochastic Arrivals},
booktitle = {Proceedings of the 2021 ACM-SIAM Symposium on Discrete Algorithms (SODA)},
chapter = {},
pages = {2842-2861},
doi = {10.1137/1.9781611976465.169},
URL = {https://epubs.siam.org/doi/abs/10.1137/1.9781611976465.169},
eprint = {https://epubs.siam.org/doi/pdf/10.1137/1.9781611976465.169},
}

@inproceedings{BJSS20,
author = {Bansal, Nikhil and Jiang, Haotian and Singla, Sahil and Sinha, Makrand},
title = {Online vector balancing and geometric discrepancy},
year = {2020},
isbn = {9781450369794},
publisher = {Association for Computing Machinery},
address = {New York, NY, USA},
url = {https://doi.org/10.1145/3357713.3384280},
doi = {10.1145/3357713.3384280},
booktitle = {Proceedings of the 52nd Annual ACM SIGACT Symposium on Theory of Computing},
pages = {1139–1152},
numpages = {14},
location = {Chicago, IL, USA},
series = {STOC 2020}
}

@book{AS2000,
author = {Noga Alon and Joel H. Spencer},
title = {The Probabilistic Method},
publisher = {John Wiley \& Sons, Inc.},
year = {2000},
doi={10.1002/0471722154}
}

@article{ES18,
author = {Eldan, Ronen and Singh, Mohit},
title = {Efficient algorithms for discrepancy minimization in convex sets},
journal = {Random Structures \& Algorithms},
volume = {53},
number = {2},
pages = {289-307},
doi = {https://doi.org/10.1002/rsa.20763},
url = {https://onlinelibrary.wiley.com/doi/abs/10.1002/rsa.20763},
eprint = {https://onlinelibrary.wiley.com/doi/pdf/10.1002/rsa.20763},
year = {2018}
}

@article{Rothvoss17,
author = {Rothvoss, Thomas},
title = {Constructive Discrepancy Minimization for Convex Sets},
journal = {SIAM Journal on Computing},
volume = {46},
number = {1},
pages = {224-234},
year = {2017},
doi = {10.1137/141000282},
URL = { 
https://doi.org/10.1137/141000282
},
eprint = { 
https://doi.org/10.1137/141000282
}

}

@INPROCEEDINGS{Bansal10,
  author={Bansal, Nikhil},
  booktitle={2010 IEEE 51st Annual Symposium on Foundations of Computer Science}, 
  title={Constructive Algorithms for Discrepancy Minimization}, 
  year={2010},
  volume={},
  number={},
  pages={3-10},
  doi={10.1109/FOCS.2010.7}}

@inproceedings{CNN11,
author = {Charikar, Moses and Newman, Alantha and Nikolov, Aleksandar},
title = {Tight hardness results for minimizing discrepancy},
year = {2011},
publisher = {Society for Industrial and Applied Mathematics},
address = {USA},
booktitle = {Proceedings of the Twenty-Second Annual ACM-SIAM Symposium on Discrete Algorithms},
pages = {1607–1614},
numpages = {8},
location = {San Francisco, California},
series = {SODA '11}
}

@article{Costello09,
  title={Balancing {G}aussian vectors},
  author={Costello, Kevin P.},
  journal={Israel Journal of Mathematics},
  volume={172},
  number={1},
  pages={145--156},
  year={2009},
url={https://doi.org/10.1007/s11856-009-0068-z},
doi= {10.1007/s11856-009-0068-z}
}

@article {QingSchikorraZhou2016,
    AUTHOR = {Liu, Qing and Schikorra, Armin and Zhou, Xiaodan},
     TITLE = {A game-theoretic proof of convexity-preserving properties for
              motion by curvature},
   JOURNAL = {Indiana Univ. Math. J.},
  FJOURNAL = {Indiana University Mathematics Journal},
    VOLUME = {65},
      YEAR = {2016},
    NUMBER = {1},
     PAGES = {171--197},
      ISSN = {0022-2518,1943-5258},
   MRCLASS = {49L25 (35J60 91A20)},
  MRNUMBER = {3466458},
MRREVIEWER = {Alp\'ar\ R.\ M\'esz\'aros},
       DOI = {10.1512/iumj.2016.65.5740},
       URL = {https://doi.org/10.1512/iumj.2016.65.5740},
}

@article {GigaGotoIshiiSato1991,
    AUTHOR = {Giga, Y. and Goto, S. and Ishii, H. and Sato, M.-H.},
     TITLE = {Comparison principle and convexity preserving properties for
              singular degenerate parabolic equations on unbounded domains},
   JOURNAL = {Indiana Univ. Math. J.},
  FJOURNAL = {Indiana University Mathematics Journal},
    VOLUME = {40},
      YEAR = {1991},
    NUMBER = {2},
     PAGES = {443--470},
      ISSN = {0022-2518,1943-5258},
   MRCLASS = {35B05 (35D05 35K65)},
  MRNUMBER = {1119185},
MRREVIEWER = {Maura\ Ughi},
       DOI = {10.1512/iumj.1991.40.40023},
       URL = {https://doi-org.libproxy.txstate.edu/10.1512/iumj.1991.40.40023},
}

@article {ChenGigaGoto1989,
    AUTHOR = {Chen, Yun Gang and Giga, Yoshikazu and Goto, Shun'ichi},
     TITLE = {Uniqueness and existence of viscosity solutions of generalized
              mean curvature flow equations},
   JOURNAL = {Proc. Japan Acad. Ser. A Math. Sci.},
  FJOURNAL = {Japan Academy. Proceedings. Series A. Mathematical Sciences},
    VOLUME = {65},
      YEAR = {1989},
    NUMBER = {7},
     PAGES = {207--210},
      ISSN = {0386-2194},
   MRCLASS = {35K15 (35K22)},
  MRNUMBER = {1030181},
MRREVIEWER = {W.\ P.\ Ziemer},
       URL = {http://projecteuclid.org.libproxy.txstate.edu/euclid.pja/1195512765},
}

@misc{altschuler2025thresholdonlinebalancingsparse,
      title={A threshold for online balancing of sparse i.i.d. vectors}, 
      author={Dylan J. Altschuler and Konstantin Tikhomirov},
      year={2025},
      eprint={2509.02432},
      archivePrefix={arXiv},
      primaryClass={math.PR},
      url={https://arxiv.org/abs/2509.02432}, 
}

@misc{albors2025structurebadsciencematrices,
      title={On the Structure of Bad Science Matrices}, 
      author={Alex Albors and Hisham Bhatti and Lukshya Ganjoo and Raymond Guo and Dmitriy Kunisky and Rohan Mukherjee and Alicia Stepin and Tony Zeng},
      year={2025},
      eprint={2408.00933},
      archivePrefix={arXiv},
      primaryClass={math.FA},
      url={https://arxiv.org/abs/2408.00933}, 
}

@article{crandall1992user,
  title={User’s guide to viscosity solutions of second order partial differential equations},
  author={Crandall, Michael G and Ishii, Hitoshi and Lions, Pierre-Louis},
  journal={Bulletin of the American mathematical society},
  volume={27},
  number={1},
  pages={1--67},
  year={1992}
}

@misc{steinerberger2024badsciencematrices,
      title={Bad Science Matrices}, 
      author={Stefan Steinerberger},
      year={2024},
      eprint={2402.03205},
      archivePrefix={arXiv},
      primaryClass={math.FA},
      url={https://arxiv.org/abs/2402.03205}, 
}

@misc{sinha2025structureextremalbadscience,
      title={The Structure of Extremal Bad Science Matrices}, 
      author={Shridhar Sinha},
      year={2025},
      eprint={2509.10580},
      archivePrefix={arXiv},
      primaryClass={math.FA},
      url={https://arxiv.org/abs/2509.10580}, 
}

@article{EL19,
author = {Ezra, Esther and Lovett, Shachar},
title = {On the {B}eck-{F}iala conjecture for random set systems},
journal = {Random Structures \& Algorithms},
volume = {54},
number = {4},
pages = {665-675},
doi = {https://doi.org/10.1002/rsa.20810},
url = {https://onlinelibrary.wiley.com/doi/abs/10.1002/rsa.20810},
eprint = {https://onlinelibrary.wiley.com/doi/pdf/10.1002/rsa.20810},
year = {2019}
}

@inproceedings{CV14,
author = {Chandrasekaran, Karthekeyan and Vempala, Santosh S.},
title = {Integer feasibility of random polytopes: random integer programs},
year = {2014},
isbn = {9781450326988},
publisher = {Association for Computing Machinery},
address = {New York, NY, USA},
url = {https://doi.org/10.1145/2554797.2554838},
doi = {10.1145/2554797.2554838},
booktitle = {Proceedings of the 5th Conference on Innovations in Theoretical Computer Science},
pages = {449–458},
numpages = {10},
location = {Princeton, New Jersey, USA},
series = {ITCS '14}
}

@article{APZ19,
doi = {10.1088/1751-8121/ab227a},
url = {https://doi.org/10.1088/1751-8121/ab227a},
year = {2019},
month = {June},
publisher = {IOP Publishing},
volume = {52},
number = {29},
pages = {294003},
author = {Aubin, Benjamin and Perkins, Will and Zdeborová, Lenka},
title = {Storage capacity in symmetric binary perceptrons},
journal = {Journal of Physics A: Mathematical and Theoretical}
}

@InProceedings{TMR20,
  title = 	 {Balancing {G}aussian vectors in high dimension},
  author =       {Turner, Paxton and Meka, Raghu and Rigollet, Philippe},
  booktitle = 	 {Proceedings of Thirty Third Conference on Learning Theory},
  pages = 	 {3455--3486},
  year = 	 {2020},
  editor = 	 {Abernethy, Jacob and Agarwal, Shivani},
  volume = 	 {125},
  series = 	 {Proceedings of Machine Learning Research},
  month = 	 {09--12 Jul},
  publisher =    {PMLR},
  url = 	 {https://proceedings.mlr.press/v125/turner20a.html}
}

@article{FS20,
author = {Franks, Cole and Saks, Michael},
title = {On the discrepancy of random matrices with many columns},
journal = {Random Structures \& Algorithms},
volume = {57},
number = {1},
pages = {64-96},
doi = {https://doi.org/10.1002/rsa.20909},
url = {https://onlinelibrary.wiley.com/doi/abs/10.1002/rsa.20909},
eprint = {https://onlinelibrary.wiley.com/doi/pdf/10.1002/rsa.20909},
year = {2020}
}

@inproceedings{HR19,
author = {Hoberg, Rebecca and Rothvoss, Thomas},
title = {A {F}ourier-analytic approach for the discrepancy of random set systems},
year = {2019},
publisher = {Society for Industrial and Applied Mathematics},
address = {USA},
booktitle = {Proceedings of the Thirtieth Annual ACM-SIAM Symposium on Discrete Algorithms},
pages = {2547–2556},
numpages = {10},
location = {San Diego, California},
series = {SODA '19}
}

@InProceedings{BJMSS22,
  author =	{Bansal, Nikhil and Jiang, Haotian and Meka, Raghu and Singla, Sahil and Sinha, Makrand},
  title =	{{Smoothed Analysis of the Koml\'{o}s Conjecture}},
  booktitle =	{49th International Colloquium on Automata, Languages, and Programming (ICALP 2022)},
  pages =	{14:1--14:12},
  series =	{Leibniz International Proceedings in Informatics (LIPIcs)},
  ISBN =	{978-3-95977-235-8},
  ISSN =	{1868-8969},
  year =	{2022},
  volume =	{229},
  editor =	{Boja\'{n}czyk, Miko{\l}aj and Merelli, Emanuela and Woodruff, David P.},
  publisher =	{Schloss Dagstuhl -- Leibniz-Zentrum f{\"u}r Informatik},
  address =	{Dagstuhl, Germany},
  URL =		{https://drops.dagstuhl.de/entities/document/10.4230/LIPIcs.ICALP.2022.14},
  URN =		{urn:nbn:de:0030-drops-163556},
  doi =		{10.4230/LIPIcs.ICALP.2022.14}
}

@misc{kunisky2025asymptoticboundsonlinealgorithms,
      title={Asymptotic Bounds and Online Algorithms for Average-Case Matrix Discrepancy}, 
      author={Dmitriy Kunisky and Timm Oertel and Nicola Wengiel and Peiyuan Zhang},
      year={2025},
      eprint={2410.23915},
      archivePrefix={arXiv},
      primaryClass={math.PR},
      url={https://arxiv.org/abs/2410.23915}, 
}

@inproceedings{Bansal22,
author = {Nikhil Bansal},
title = {Discrepancy theory and related algorithms},
year = {2022},
publisher = {International Mathematical Union},
url = {https://ems.press/content/book-chapter-files/33335},
doi = {10.4171/ICM2022/169},
booktitle = {Proc. Int. Cong. Math},
pages = {5178--5210},
volume={7}
}

@article{AN22,
author = {Altschuler, Dylan J. and Niles-Weed, Jonathan},
title = {The discrepancy of random rectangular matrices},
journal = {Random Structures \& Algorithms},
volume = {60},
number = {4},
pages = {551-593},
doi = {https://doi.org/10.1002/rsa.21054},
url = {https://onlinelibrary.wiley.com/doi/abs/10.1002/rsa.21054},
eprint = {https://onlinelibrary.wiley.com/doi/pdf/10.1002/rsa.21054},
year = {2022}
}

@article{CGRT22,
title = {Gaussian discrepancy: A probabilistic relaxation of vector balancing},
journal = {Discrete Applied Mathematics},
volume = {322},
pages = {123-141},
year = {2022},
issn = {0166-218X},
doi = {https://doi.org/10.1016/j.dam.2022.08.007},
url = {https://www.sciencedirect.com/science/article/pii/S0166218X22002992},
author = {Sinho Chewi and Patrik Gerber and Philippe Rigollet and Paxton Turner}
}

@article{ST04,
author = {Spielman, Daniel A. and Teng, Shang-Hua},
title = {Smoothed analysis of algorithms: Why the simplex algorithm usually takes polynomial time},
year = {2004},
issue_date = {May 2004},
publisher = {Association for Computing Machinery},
address = {New York, NY, USA},
volume = {51},
number = {3},
issn = {0004-5411},
url = {https://doi.org/10.1145/990308.990310},
doi = {10.1145/990308.990310},
journal = {J. ACM},
month = may,
pages = {385–463},
numpages = {79}
}

@article{HRS24,
author = {Haghtalab, Nika and Roughgarden, Tim and Shetty, Abhishek},
title = {Smoothed Analysis with Adaptive Adversaries},
year = {2024},
issue_date = {June 2024},
publisher = {Association for Computing Machinery},
address = {New York, NY, USA},
volume = {71},
number = {3},
issn = {0004-5411},
url = {https://doi.org/10.1145/3656638},
doi = {10.1145/3656638},
journal = {J. ACM},
month = jun,
articleno = {19},
numpages = {34}
}

@article{AHT25, 
title={Smoothed Analysis of the {K}omlós Conjecture: {R}ademacher Noise}, 
volume={32}, 
url={https://www.combinatorics.org/ojs/index.php/eljc/article/view/v32i1p52}, 
DOI={10.37236/13213}, 
number={1}, 
journal={The Electronic Journal of Combinatorics}, 
author={Aigner-Horev, Elad and Hefetz, Dan and Trushkin, Michael}, 
year={2025},
month={Mar.}, 
pages={P1.52} }

@inproceedings{ALS21,
author = {Alweiss, Ryan and Liu, Yang P. and Sawhney, Mehtaab},
title = {Discrepancy minimization via a self-balancing walk},
year = {2021},
isbn = {9781450380539},
publisher = {Association for Computing Machinery},
address = {New York, NY, USA},
url = {https://doi.org/10.1145/3406325.3450994},
doi = {10.1145/3406325.3450994},
booktitle = {Proceedings of the 53rd Annual ACM SIGACT Symposium on Theory of Computing},
pages = {14–20},
numpages = {7},
location = {Virtual, Italy},
series = {STOC 2021}
}

@misc{kobzar22,
  url = {https://arxiv.org/pdf/2202.05767.pdf},
  author = {Vladimir A. Kobzar and Robert V. Kohn},
  title = {A {PDE}-Based Analysis of the Symmetric Two-Armed {B}ernoulli Bandit},
  publisher = {arXiv},
  year = {2022}
}

@misc{kobzar26,
  author = {Vladimir A. Kobzar},
  title = {The Symmetric Two-Armed Bandit: the General Case},
  year = {2026}
}

@inproceedings{KJR24,
author = {Kulkarni, Janardhan and Reis, Victor and Rothvoss, Thomas},
title = {Optimal Online Discrepancy Minimization},
year = {2024},
isbn = {9798400703836},
publisher = {Association for Computing Machinery},
address = {New York, NY, USA},
url = {https://doi.org/10.1145/3618260.3649720},
doi = {10.1145/3618260.3649720},
booktitle = {Proceedings of the 56th Annual ACM Symposium on Theory of Computing},
pages = {1832–1840},
numpages = {9},
location = {Vancouver, BC, Canada},
series = {STOC 2024}
}

@article{dasgupta14,
    author = {Anirban DasGupta and S. N. Lahiri and Jordan Stoyanov},
    title = {Sharp fixed $n$ bounds and asymptotic expansions for the mean and the median of a {G}aussian sample maximum, and applications to the {D}onoho--{J}in model},
    journal = {Statistical Methodology},
    volume = {20},
    pages = {40-62},
    year = {2014}
}

@misc{orabona,
  author    = {Francesco Orabona and D{\'a}vid P{\'a}l},
  title     = {Optimal Non-Asymptotic Lower Bound on the Minimax Regret of Learning with Expert Advice}, 
  note = {Available at \url{https://arxiv.org/abs/1511.02176}},
  year  = {2015}
}

@misc{kamath98,
 author = {Gautam Kamath},
 title = {Bounds on the Expectation of the Maximum of Samples from a
{G}aussian},
 year = {1998},
 url = {http://www.gautamkamath.com/writings/gaussian_max.pdf},
 }

@article{BS20,
author = {Bansal, Nikhil and Spencer, Joel H.},
title = {On-line balancing of random inputs},
journal = {Random Structures \& Algorithms},
volume = {57},
number = {4},
pages = {879-891},
doi = {https://doi.org/10.1002/rsa.20955},
url = {https://onlinelibrary.wiley.com/doi/abs/10.1002/rsa.20955},
eprint = {https://onlinelibrary.wiley.com/doi/pdf/10.1002/rsa.20955},
year = {2020}
}

@book{Galambos87,
author = {Galambos, J.} ,
title = {Asymptotic Theory of Extreme Order Statistics},
publisher = {Wiley, New York},
year = {1987}
}

@article{BDGL19,
 author = {Bansal, Nikhil and Dadush, Daniel and Garg, Shashwat and Lovett, Shachar},
 title = {The {G}ram--{S}chmidt Walk: A Cure for the {B}anaszczyk Blues},
 year = {2019},
 pages = {1--27},
 doi = {10.4086/toc.2019.v015a021},
 publisher = {Theory of Computing},
 journal = {Theory of Computing},
 volume = {15},
 number = {21},
 URL = {https://theoryofcomputing.org/articles/v015a021},
}

@InProceedings{BG26,
author="Bednorz, Witold
and Godlewski, Piotr",
editor="Adamczak, Rados{\l}aw
and Gozlan, Nathael
and Lounici, Karim
and Madiman, Mokshay
and Merlev{\`e}de, Florence
and Werner, Elisabeth",
title="Some Remarks on the {G}ram-{S}chmidt Walk Algorithm and Consequences for the {K}oml{\'o}s Conjecture",
booktitle="High Dimensional Probability X",
year="2026",
publisher="Springer Nature Switzerland",
address="Cham",
pages="37--66",
isbn="978-3-032-06057-0"
}

@article {BarSou1991,
    AUTHOR = {Barles, G. and Souganidis, P. E.},
     TITLE = {Convergence of approximation schemes for fully nonlinear
              second order equations},
   JOURNAL = {Asymptotic Anal.},
  FJOURNAL = {Asymptotic Analysis},
    VOLUME = {4},
      YEAR = {1991},
    NUMBER = {3},
     PAGES = {271--283},
      ISSN = {0921-7134},
   MRCLASS = {35K55 (35A40 35J60 65M12)},
  MRNUMBER = {1115933},
}

@incollection {Pont1985,
    AUTHOR = {Pontryagin, L. S.},
     TITLE = {The mathematical theory of optimal processes and differential
              games},
      NOTE = {Topology, ordinary differential equations, dynamical systems},
   JOURNAL = {Trudy Mat. Inst. Steklov.},
  FJOURNAL = {Akademiya Nauk SSSR. Trudy Matematicheskogo Instituta imeni V.
              A. Steklova},
    VOLUME = {169},
      YEAR = {1985},
     PAGES = {119--158, 254--255},
      ISSN = {0371-9685},
   MRCLASS = {49-02 (49A45 90D25)},
  MRNUMBER = {836572},
MRREVIEWER = {Claudia\ Simionescu-Badea},
}

@book {Bellman1957,
    AUTHOR = {Bellman, Richard},
     TITLE = {Dynamic programming},
    SERIES = {Princeton Landmarks in Mathematics},
      NOTE = {Reprint of the 1957 edition,
              With a new introduction by Stuart Dreyfus},
 PUBLISHER = {Princeton University Press, Princeton, NJ},
      YEAR = {2010},
     PAGES = {xxx+340},
      ISBN = {978-0-691-14668-3},
   MRCLASS = {90-01 (49L20 90C39)},
  MRNUMBER = {2641641},
}

@article{courrege1965,
  title={Sur la forme int{\'e}gro-diff{\'e}rentielle des op{\'e}rateurs de $ C^\infty_k $ dans $ C $ satisfaisant au principe du maximum},
  author={Courrege, Philippe},
  journal={S{\'e}minaire de th{\'e}orie du potentiel},
  volume={10},
  number={1},
  pages={1--38},
  year={1965}
}

@article{peres2009tug,
  title={Tug-of-war and the infinity {L}aplacian},
  author={Peres, Yuval and Schramm, Oded and Sheffield, Scott and Wilson, David},
  journal={Journal of the American Mathematical Society},
  volume={22},
  number={1},
  pages={167--210},
  year={2009}
}

@article{charro2009mixed,
  title={A mixed problem for the infinity Laplacian via tug-of-war games},
  author={Charro, Fernando and Garc{\'\i}a Azorero, Jesus and Rossi, Julio D},
  journal={Calculus of Variations and Partial Differential Equations},
  volume={34},
  number={3},
  pages={307--320},
  year={2009},
  publisher={Springer}
}

@article{soner2002dynamic,
  title={Dynamic programming for stochastic target problems and geometric flows},
  author={Soner, H Mete and Touzi, Nizar},
  journal={Journal of the European Mathematical Society},
  volume={4},
  number={3},
  pages={201--236},
  year={2002}
}

@article{soner2003stochastic,
  title={A stochastic representation for mean curvature type geometric flows},
  author={Soner, H Mete and Touzi, Nizar},
  journal={The Annals of probability},
  volume={31},
  number={3},
  pages={1145--1165},
  year={2003},
  publisher={Institute of Mathematical Statistics}
}

@article{buckdahn2001representation,
  title={A representation formula for the mean curvature motion},
  author={Buckdahn, Rainer and Cardaliaguet, Pierre and Quincampoix, Marc},
  journal={SIAM Journal on Mathematical Analysis},
  volume={33},
  number={4},
  pages={827--846},
  year={2001},
  publisher={SIAM}
}

@article{larsson2024minimum,
  title={Minimum curvature flow and martingale exit times},
  author={Larsson, Martin and Ruf, Johannes},
  journal={Electronic Journal of Probability},
  volume={29},
  pages={1--32},
  year={2024},
  publisher={The Institute of Mathematical Statistics and the Bernoulli Society}
}

@book{bardi1997optimal,
  title={Optimal control and viscosity solutions of Hamilton-Jacobi-Bellman equations},
  author={Bardi, Martino and Dolcetta, Italo Capuzzo and others},
  volume={12},
  year={1997},
  publisher={Springer}
}

@book{fleming2006controlled,
  title={Controlled Markov processes and viscosity solutions},
  author={Fleming, Wendell H and Soner, H Mete},
  year={2006},
  publisher={Springer}
}

@book{brakke1978motion,
  title={The motion of a surface by its mean curvature},
  author={Brakke, Kenneth A.},
  series={Mathematical Notes},
  publisher ={Princeton Univ. Press, Princeton, NJ},
  year={1978}
}

@book{tonegawa2019brakke,
  title={Brakke's Mean Curvature Flow: An Introduction},
  author={Tonegawa, Yoshihiro},
  year={2019},
  publisher={Springer}
}

@article{andrews1994contraction,
  title={Contraction of convex hypersurfaces in {E}uclidean space},
  author={Andrews, Ben},
  journal={Calculus of Variations and Partial Differential Equations},
  volume={2},
  number={2},
  pages={151--171},
  year={1994},
  publisher={Springer}
}

@book{giga2006surface,
  title={Surface evolution equations: A level set approach},
  author={Giga, Yoshikazu},
  year={2006},
  publisher={Springer}
}

@book{osher2004level,
author = {Stanley Osher and Ronald Fedkiw},
title = {Level Set Methods and Dynamic Implicit Surfaces},
series = {Applied Mathematical Sciences},
publisher = {Springer New York, NY},
year = {2002},
doi = {10.1007/b98879}
}

@article{colding2015mean,
  title={Mean curvature flow},
  author={Colding, Tobias and Minicozzi, William and Pedersen, Erik and others},
  journal={Bulletin of the American Mathematical Society},
  volume={52},
  number={2},
  pages={297--333},
  year={2015}
}

@InProceedings{kobzar_geom, 
title = {New Potential-Based Bounds for the Geometric-Stopping Version of Prediction with Expert Advice}, 
author = {Kobzar, Vladimir A. and Kohn, Robert V. and Wang, Zhilei}, 
pages = {537--554}, 
year = {2020}, 
editor = {Jianfeng Lu and Rachel Ward}, 
volume = {107}, 
series = {Proceedings of Machine Learning Research}, 
address = {Princeton University, Princeton, NJ, USA}, 
month = {20--24 Jul}, 
publisher = {PMLR}, 
 booktitle   = {Proceedings of the 1st Annual Conference on Mathematical and Scientific Machine Learning}, 
url = {http://proceedings.mlr.press/v107/kobzar20a.html}, 
}

@misc{Lonke98,
 author = {Y. Lonke},
 title = {Combinatorial problems in finite dimensional normed spaces  ({PhD} dissertation, {H}ebrew University)},
 year = {1998},

 }

@article{Kashin85,
 author = {Kashin, B. S.},
 title = {On an isometric operator in {{\(L^ 2(0,1)\)}}},
 fjournal = {Doklady Bolgarsko{\u{\i}} Akademii Nauk (C.R. Acad Bulgare Sci.)},
 journal = {C. R. Acad. Bulg. Sci.},
 issn = {1310-1331},
 volume = {38},
 pages = {1613--1615},
 year = {1985},
 language = {Russian},
}

@article{bandeira2022remark,
  title = {A remark on {K}ashin’s discrepancy argument and partial coloring in the {K}oml{\'o}s conjecture},
  author = {Bandeira, Afonso S and Maillard, Antoine and Zhivotovskiy, Nikita},
  journal = {Portugaliae Mathematica},
  volume = {79},
  number = {3},
  pages = {311--316},
  year = {2022},
  doi = {10.4171/PM/2085}
}

@InProceedings{kohn07,
author = {Robert V. Kohn and Sylvia Serfaty},
  title =	{Second-Order {PDE}'s and Deterministic Games},
  booktitle =	{ICIAM 07 : 6th International Conference on Industrial and Applied Mathematics},
  year =	{2007},
 month = {July},
  address ={Zurich, Switzerland}
}

@book{Spe94,
author = {Spencer, Joel},
title = {Ten Lectures on the Probabilistic Method},
publisher = {Society for Industrial and Applied Mathematics},
year = {1994},
doi = {10.1137/1.9781611970074},
edition   = {2nd Edition},
URL = {https://epubs.siam.org/doi/abs/10.1137/1.9781611970074},
eprint = {https://epubs.siam.org/doi/pdf/10.1137/1.9781611970074}
}

@book{CST14,
  title={A Panorama of Discrepancy Theory},
  editor={William Chen and Anand Srivastav and Giancarlo Travaglini},
  url={https://doi.org/10.1007/978-3-319-04696-9},
  year={2014},
  publisher={Springer Cham}
}

@book{Chazelle2000, 
title={The Discrepancy Method: Randomness and Complexity}, 
publisher={Cambridge University Press}, 
author={Chazelle, Bernard}, 
year={2000}}

@book{Travaglini14, 
series={London Mathematical Society Student Texts}, 
title={Number Theory, Fourier Analysis and Geometric Discrepancy}, publisher={Cambridge University Press}, 
author={Travaglini, Giancarlo}, 
year={2014}, 
collection={London Mathematical Society Student Texts}}

@article{HSS24,
author = {Christopher Harshaw and Fredrik Sävje and Daniel A. Spielman and Peng Zhang},
title = {Balancing Covariates in Randomized Experiments with the {G}ram–{S}chmidt Walk Design},
journal = {Journal of the American Statistical Association},
volume = {119},
number = {548},
pages = {2934--2946},
year = {2024},
publisher = {Taylor \& Francis},
doi = {10.1080/01621459.2023.2285474},
URL ={https://doi.org/10.1080/01621459.2023.2285474},
eprint = { https://doi.org/10.1080/01621459.2023.2285474}
}

@book{Matousek99,
  title={Geometric Discrepancy},
  author={Jiří Matoušek},
  url={https://doi.org/10.1007/978-3-642-03942-3},
  year={1999},
  publisher={Springer Berlin, Heidelberg}
}

@article{Gunturk12,
author = {Güntürk, C. Sinan},
title = {Mathematics of Analog-to-Digital Conversion},
journal = {Communications on Pure and Applied Mathematics},
volume = {65},
number = {12},
pages = {1671-1696},
doi = {https://doi.org/10.1002/cpa.21425},
url = {https://onlinelibrary.wiley.com/doi/abs/10.1002/cpa.21425},
eprint = {https://onlinelibrary.wiley.com/doi/pdf/10.1002/cpa.21425},
year = {2012}
}

@article{Banaszczyk1993,
author = {Banaszczyk, Wojciech},
journal = {Studia Mathematica},
language = {eng},
number = {1},
pages = {93-100},
title = {Balancing vectors and convex bodies},
url = {http://eudml.org/doc/216005},
volume = {106},
year = {1993},
}

@article{Giannopoulos1997,
author = {Giannopoulos, Apostolos},
journal = {Studia Mathematica},
language = {eng},
number = {3},
pages = {225-234},
title = {On some vector balancing problems},
url = {http://eudml.org/doc/216373},
volume = {122},
year = {1997},
}

@article{hajela88,
  title={On a conjecture of {K}omlos about signed sums of vectors inside the sphere},
  author={Hajela, D},
  journal={European Journal of Combinatorics},
  volume={9},
  number={1},
  pages={33--37},
  year={1988},
  publisher={Academic Press Ltd. London, UK, UK}
}

@article{CS21,
url = {https://doi.org/10.1515/advgeom-2019-0030},
title = {A note on norms of signed sums of vectors},
author = {Giorgos Chasapis and Nikos Skarmogiannis},
pages = {5--14},
volume = {21},
number = {1},
journal = {Advances in Geometry},
doi = {doi:10.1515/advgeom-2019-0030},
year = {2021},
}

@article{BDG19,
author = {Bansal, Nikhil and Dadush, Daniel and Garg, Shashwat},
title = {An Algorithm for {K}omlós Conjecture Matching {B}anaszczyk's Bound},
journal = {SIAM Journal on Computing},
volume = {48},
number = {2},
pages = {534-553},
year = {2019},
doi = {10.1137/17M1126795},
URL = { https://doi.org/10.1137/17M1126795},
eprint = {         https://doi.org/10.1137/17M1126795}
}

@article{LM15,
author = {Lovett, Shachar and Meka, Raghu},
title = {Constructive Discrepancy Minimization by Walking on the Edges},
journal = {SIAM Journal on Computing},
volume = {44},
number = {5},
pages = {1573-1582},
year = {2015},
doi = {10.1137/130929400},
URL = {https://doi.org/10.1137/130929400},
eprint = {https://doi.org/10.1137/130929400}
}

@article{Glushkin89,
title = {Extremal properties of orthogonal parallelepipeds and their applications to the geometry of {B}anach spaces},
journal = {Mathematics of the USSR-Sbornik},
volume = {64},
number = {1},
pages = {85-96},
year = {1989},
doi = {https://doi.org/10.1070/SM1989v064n01ABEH003295},
url = {https://doi.org/10. 1070/SM1989v064n01ABEH003295},
author = {Efim Davydovich Gluskin},
}

@article{Spencer77,
title = {Balancing games},
journal = {Journal of Combinatorial Theory, Series B},
volume = {23},
number = {1},
pages = {68-74},
year = {1977},
issn = {0095-8956},
doi = {https://doi.org/10.1016/0095-8956(77)90057-0},
url = {https://www.sciencedirect.com/science/article/pii/0095895677900570},
author = {Joel Spencer},
}

@InProceedings{BLV22,
  author =	{Bansal, Nikhil and Laddha, Aditi and Vempala, Santosh},
  title =	{{A Unified Approach to Discrepancy Minimization}},
  booktitle =	{Approximation, Randomization, and Combinatorial Optimization. Algorithms and Techniques (APPROX/RANDOM 2022)},
  pages =	{1:1--1:22},
  series =	{Leibniz International Proceedings in Informatics (LIPIcs)},
  ISBN =	{978-3-95977-249-5},
  ISSN =	{1868-8969},
  year =	{2022},
  volume =	{245},
  editor =	{Chakrabarti, Amit and Swamy, Chaitanya},
  publisher =	{Schloss Dagstuhl -- Leibniz-Zentrum f{\"u}r Informatik},
  address =	{Dagstuhl, Germany},
  URL =		{https://drops.dagstuhl.de/entities/document/10.4230/LIPIcs.APPROX/RANDOM.2022.1},
  URN =		{urn:nbn:de:0030-drops-171238},
  doi =		{10.4230/LIPIcs.APPROX/RANDOM.2022.1}
}

@InProceedings{LRR17,
author="Levy, Avi
and Ramadas, Harishchandra
and Rothvoss, Thomas",
editor="Eisenbrand, Friedrich
and Koenemann, Jochen",
title="Deterministic Discrepancy Minimization via the Multiplicative Weight Update Method",
booktitle="Integer Programming and Combinatorial Optimization",
year="2017",
publisher="Springer International Publishing",
address="Cham",
pages="380--391",
isbn="978-3-319-59250-3"
}

@article{Ban98,
author = {Banaszczyk, Wojciech},
title = {Balancing vectors and {G}aussian measures of n-dimensional convex bodies},
journal = {Random Structures \& Algorithms},
volume = {12},
number = {4},
pages = {351-360},
doi = {https://doi.org/10.1002/(SICI)1098-2418(199807)12:4<351::AID-RSA3>3.0.CO;2-S},
year = {1998}
}

@article{BECK19811,
title = {“{I}nteger-making” theorems},
journal = {Discrete Applied Mathematics},
volume = {3},
number = {1},
pages = {1-8},
year = {1981},
issn = {0166-218X},
doi = {https://doi.org/10.1016/0166-218X(81)90022-6},
url = {https://www.sciencedirect.com/science/article/pii/0166218X81900226},
author = {J{\'o}zsef  Beck and Tibor Fiala},

}

@inproceedings{BJ2025,
author = {Nikhil Bansal and Haotian Jiang},
title = {Decoupling via Affine Spectral-Independence: {B}eck-{F}iala and {K}oml\'os Bounds Beyond {B}anaszczyk},
year = {2026},
publisher = {Association for Computing Machinery},
address = {New York, NY, USA},
url = {https://arxiv.org/abs/2508.03961},
booktitle = {Proceedings of the 58th Annual ACM SIGACT Symposium on Theory of Computing, to appear},
series = {STOC 2026}
}

@article{Spe85,
 ISSN = {00029947},
 URL = {http://www.jstor.org/stable/2000258},
 author = {Joel Spencer},
 journal = {Transactions of the American Mathematical Society},
 number = {2},
 pages = {679--706},
 publisher = {American Mathematical Society},
 title = {Six Standard Deviations Suffice},
 volume = {289},
 year = {1985}
}

@article{Kun23,
author = {Kunisky, Dmitriy},
title = {The Discrepancy of Unsatisfiable Matrices and a Lower Bound for the {K}omlós Conjecture Constant},
journal = {SIAM Journal on Discrete Mathematics},
volume = {37},
number = {2},
pages = {586-603},
year = {2023},
doi = {10.1137/22M149819X},
URL = { https://doi.org/10.1137/22M149819X},
eprint = { https://doi.org/10.1137/22M149819X}
}

@article{Milman09,
author = {Emanuel Milman},
title = {On the role of convexity in isoperimetry, spectral gap and concentration},
volume = {177},
journal = {Inventiones mathematicae},
pages = {1 -- 43},
year = {2009},
url={https://doi.org/10.1007/s00222-009-0175-9}
}

@misc{Cheeger70,
 author = {Cheeger, J.},
 title = {A lower bound for the smallest eigenvalue of the {Laplacian}},
 year = {1970},
 language = {English},
 howpublished = {Probl. {Analysis}, {Sympos}. in {Honor} of {Salomon} {Bochner}, {Princeton} {Univ}. 1969, 195-199 (1970).},
 zbMATH = {3337135},
 Zbl = {0212.44903}
 }

@InProceedings{kobzar, 
title = {New Potential-Based Bounds for Prediction with Expert Advice}, 
author = {Kobzar, Vladimir A. and Kohn, Robert V. and Wang, Zhilei}, 
pages = {2370--2405}, 
year = {2020}, 
editor = {Jacob Abernethy and Shivani Agarwal}, 
volume = {125}, 
series = {Proceedings of Machine Learning Research}, 
address = {}, 
booktitle =  {Proceedings of the 33rd Annual Conference on Learning Theory (COLT)},
month = {09--12 Jul}, 
publisher = {PMLR}, 
url = {http://proceedings.mlr.press/v125/kobzar20a.html} 
}

@article{Banaszczyk1990,
  title={A {B}eck—{F}iala-type theorem for {E}uclidean norms},
  author={Banaszczyk, Wojciech},
  journal={European Journal of Combinatorics},
  volume={11},
  number={6},
  pages={497--500},
  year={1990},
  publisher={Elsevier}
}

@incollection{EvansSpruck1991,
  title={Motion of level sets by mean curvature. {I}},
  author={Evans, Lawrence C and Spruck, Joel},
  booktitle={Fundamental Contributions to the Continuum Theory of Evolving Phase Interfaces in Solids: A Collection of Reprints of 14 Seminal Papers},
  pages={328--374},
  year={1991},
  publisher={Springer}
}

@article{GigaYamauchi1993,
  title={On a lower bound for the extinction time of surfaces moved by mean curvature},
  author={Giga, Yoshikazu and Yama-uchi, Kazuyuki},
  journal={Calculus of Variations and Partial Differential Equations},
  volume={1},
  number={4},
  pages={417--428},
  year={1993},
  publisher={Springer}
}

@article{Huisken1984,
  title={Flow by mean curvature of convex surfaces into spheres},
  author={Huisken, Gerhard},
  journal={Journal of Differential Geometry},
  volume={20},
  number={1},
  pages={237--266},
  year={1984},
  publisher={Lehigh University}
}

@article{KohnSerfaty2006,
  title={A deterministic-control-based approach to motion by curvature},
  author={Kohn, Robert and Serfaty, Sylvia},
  journal={Communications on Pure and Applied Mathematics: A Journal Issued by the Courant Institute of Mathematical Sciences},
  volume={59},
  number={3},
  pages={344--407},
  year={2006},
  publisher={Wiley Online Library}
}

@inproceedings{PesentiVladu2023,
  title={Discrepancy minimization via regularization},
  author={Pesenti, Lucas and Vladu, Adrian},
  booktitle={Proceedings of the 2023 Annual ACM-SIAM Symposium on Discrete Algorithms (SODA)},
  pages={1734--1758},
  year={2023},
  organization={SIAM}
}

\appendix

\section{Proofs of some standard propositions}\label{a:proof of some standard proposition}

In this Appendix, we record the proofs of several propositions used in the paper. These propositions are relatively standard and might distract from the main ideas in the paper, so their proofs have been relegated here. In what follows, we review the proofs of Proposition \ref{p:consistency of discretization}, Lemma \ref{l:half relaxed limits are sub or super solutions}, and also state and prove Proposition \ref{p:stability of touching by test functions}. 

\begin{proof}[Proof of Proposition \ref{p:consistency of discretization}]

  We are  writing down the formula for the remainder term in a Taylor expansion, which we review for completeness and clarity on how the error term depends on $\rho$. Observe that
  \begin{align*}
    \partial_t^{-h}\phi(x,t)-\partial_t\phi(x,t)= \tfrac{1}{h}(\phi(x,t)-\phi(x,t-h)-h\partial_t \phi(x,t)).    
  \end{align*} 
  Therefore, the first half of the proposition is equivalent to the statement
  \begin{align*}
    |\phi(x,t)-\phi(x,t-h)-h\partial_t \phi(x,t)| & \leq \rho(h)h.
  \end{align*}
  Indeed, integrating $\partial_t \phi$ we have
  \begin{align*}
    \phi(x,t) & = \phi(x,t-h) + \int_{t-h}^t \partial_s \phi(x,s) ds\\
      & =  \phi(x,t-h) + h \partial_t\phi(x,t) + \int_{t-h}^t ((\partial_s \phi)(x,s)-(\partial_s \phi)(x,t))\;ds.
  \end{align*}
  Since $\rho(\cdot)$ is a modulus of continuity for $\partial_t \phi$, the integral is on the right has a modulus not larger than $h \rho(h)$, from where the first bound follows.  Next, we look at the spatial derivatives, let $z \in \mathbb{R}^m$, then 
  \begin{align*}
    \nabla \phi(x+z,t) & = \nabla \phi(x,t)+\int_0^1\frac{d}{ds}\nabla \phi(x+sz,t)\;ds\\
      & = \nabla \phi(x,t)+ \int_0^1 D^2\phi(x+sz,t) z\;ds\\
      & = \nabla \phi(x,t)+ D^2\phi(x,t)z + \int_0^1 (D^2\phi(x+sz,t)-D^2\phi(x,t))z\;ds.
  \end{align*}
  Integrating again, we have
  \begin{align*}
    & \phi(x+ z,t)-\phi(x,t)-\nabla \phi(x,t)\cdot z\\
    & = \int_0^1 (\nabla \phi(x+s_1z,t)-\nabla \phi(x,t))\cdot z\;ds_1\\
    & = \int_0^1 (D^2\phi(x,t)z)\cdot z s_2\;ds_2+\int_0^1\int_0^1 (D^2\phi(x+s_1s_2z,t)-D^2\phi(x,t))z\cdot z\;ds_1ds_2.
  \end{align*}
  Rearranging, we have 
  \begin{align*}
    & \phi(x+ z,t)-\phi(x,t)-\nabla \phi(x,t)\cdot z-\tfrac{1}{2}D^2\phi(x,t)z\cdot z\\
    & = \int_0^1\int_0^1 (D^2\phi(x+s_1s_2z,t)-D^2\phi(x,t))z\cdot z\;ds_1ds_2.
  \end{align*}
  The integrand in the last double integral has absolute value no larger than $\omega(|z|)|z|^2$, and so
  \begin{align*}
    |\phi(x+z) - \phi(x)  -  z\cdot \nabla \phi(x) - \tfrac{1}{2}(D^2\phi(x) z,z)) | \leq \rho(|z|)|z|^2.
  \end{align*}
  We apply this inequality by taking $z = \delta A\overline \varepsilon$ and use it to bound the difference
  \begin{align*}
    \left | \max_A \min_{\overline\varepsilon}\delta^{-2}(\phi(x+\delta A\overline \varepsilon)-\phi(x)) - \tfrac{1}{2}\max_A \min_{\overline\varepsilon}\left \{ 2\delta^{-1} A\overline \varepsilon \cdot \nabla \phi(x) + (D^2\phi(x)A\overline \varepsilon,A\overline \varepsilon)\right \} \right |,
  \end{align*}
  by the maximum of $\rho(\delta \|A\overline \varepsilon\|_2)\|A\overline\varepsilon\|_2^2$ over all admissible $A$ and $\overline\varepsilon$. As the difference is simply $\mathcal{H}_{\delta,n}(\phi)(x)$ minus $H_{\delta,n}(D^2\phi(x),\nabla \phi(x))$, we have 
  \begin{align*}
    \left | \mathcal{H}_{\delta,n}(\phi)(x) - H_{\delta,n}(D^2\phi(x),\nabla \phi(x))  \right | \leq 2\max_{A,\overline\varepsilon}\left \{ \rho(\delta \|A\overline\varepsilon\|_2)\|A\overline\varepsilon\|_2^2 \right \} \leq 2\rho(n\delta )n^2
  \end{align*}

\end{proof}

\begin{proof}[Proof of Lemma \ref{l:half relaxed limits are sub or super solutions}]
  Consider first the case where we have a smooth function $\phi$ such that $w^*-\phi$ has a strict maximum at some $(x_0,t_0)$. Then, it can be shown (see Proposition \ref{p:stability of touching by test functions} below there are sequences $\delta_k \to 0^+$ and $(x_k,t_k)$ with $(x_k,t_k) \to (x_0,t_0)$ and $t_k \in \delta_k^2\mathbb{Z}_{-}$ such that $w_{\delta_k}-\phi$ has a local maximum at $(x_k,t_k)$ for every $k$. In particular, we have the inequality 
  \begin{align}\label{e:appendix test function discrete subsolution property}
    \partial_t^{-\delta_k^2}\phi(x_k,t_k+\delta^2) + \tfrac{1}{2}\mathcal{H}_{\delta_k,n}(\phi)(x_k,t_k+\delta^2) \geq 0.
  \end{align}
  Indeed, since $w_{\delta_k}-\phi$ has a local maximum at $(x_k,t_k)$, we have
\begin{align*}
    \phi(x',t')-\phi(x_k,t_k) \geq w_{\delta_k}(x',t')-w_{\delta_k}(x_k,t_k),    
  \end{align*}
  for any $x'\in\mathbb{R}^m,\;t'\in\delta^2_k\mathbb{Z}_-$. In particular, for every $A$ and $\overline \varepsilon$ we have
  \begin{align*}
    \phi(x_k+\delta_k A\overline \varepsilon,t_k+\delta_k^2)-\phi(x_k,t_k) \geq w_{\delta_k}(x_k+\delta_k A\overline \varepsilon,t_k+\delta_k^2)-w_{\delta_k}(x_k,t_k). 
  \end{align*}
  Taking the minimum for all admissible $\overline \varepsilon$ followed by the maximum over all admissible $A$ it follows that 
  \begin{align*}
    & -\phi(x_k,t_k)+ \max_A \min_{\overline \varepsilon}\phi(x_k+\delta_k A\overline \varepsilon,t_k+\delta_k^2) \\
    & \geq -w_{\delta_k}(x_k,t_k) + \max_A \min_{\overline \varepsilon} w_{\delta_k}(x_k+\delta_k A\overline \varepsilon,t_k+\delta_k^2) = 0.
  \end{align*}
  Then, from the definition of $\partial_t^{-\delta_k^2}$ and $\mathcal{H}_{\delta_k,n}$ (see Section \ref{s:Banaszczyk's theorem and the limiting Bellman operator}) we have 
  \begin{align*}
    \partial_t^{-\delta_k^2}\phi(x_k,t_k+\delta_k^2) + \tfrac{1}{2}\mathcal{H}_{\delta_k,n}(\phi)(x_k,t_k+\delta)  \geq 0,
  \end{align*}
  and thus we obtain the inequality in \eqref{e:appendix test function discrete subsolution property}. Now, with $\rho(\cdot)$ denoting a modulus of continuity for $\partial_t\phi$ and $D^2\phi$ we apply Proposition \ref{p:consistency of discretization} and obtain
  \begin{align*}
    |\partial_t^{-\delta_k^2}\phi(x_k,t_k+\delta_k^2) - \partial_t\phi(x_k,t_k+\delta_k^2)| \leq \rho(\delta_k^2),  
  \end{align*}
  as well as
  \begin{align*}
    |\mathcal{H}_{\delta_k,n}(\phi)(x_k,t_k+\delta_k^2) - H_{\delta_k,n}(D^2\phi,\nabla \phi)(x_k,t_k+\delta_k^2)| \leq 2\rho(\delta_k n)n^2.  
  \end{align*}
  In particular, 
  \begin{align*}
  \partial_t\phi(x_k,t_k+\delta_k^2)+\tfrac{1}{2}H_{\delta,n}(D^2\phi(x_k,t_k+\delta_k^2),\nabla \phi(x_k,t_k+\delta_k^2)) \geq - \rho(\delta_k^2)-2\rho(\delta_kn)n^2.
  \end{align*}
  We now use Lemma \ref{l:liminf and limsup for H delta n} which states that 
  \begin{align*}
    \limsup_k H_{\delta_k,n}(M_k,p_k) \leq (H_n)^*(M,p),
  \end{align*}
  and conclude that 
  \begin{align*}
    \partial_t\phi(x_0,t_0)+\tfrac{1}{2}(H_n)^*(D^2\phi(x_0,t_0),\nabla \phi(x_0,t_0)) \geq - \lim_{\delta \to 0^+}\{ \rho(\delta^2)+2\rho(\delta n)n^2\} = 0.
  \end{align*}
  This shows $w^*$ is a viscosity subsolution. Now suppose $\phi$ is a smooth function such that $w_*-\phi$ has a local minimum at $(x_0,t_0)$. As before, we can produce a sequence $\delta_k\to 0^+$ and $(x_k,t_k)$ with $t_k \in \delta_k^2\mathbb{Z}_-$ such that $w_{\delta_k}-\phi$ has a local minimum at $(x_k,t_k)$, in which case
  \begin{align*}
    \partial_t^{-\delta_k^2}\phi(x_k,t_k+\delta_k^2) + \tfrac{1}{2}\mathcal{H}_{\delta_k,n}(\phi)(x_k,t_k+\delta_k^2) \leq 0.    
  \end{align*}
  Again as before, we use Proposition \ref{p:consistency of discretization} and conclude that
  \begin{align*}
    \partial_t\phi(x_k,t_k+\delta_k^2)+\tfrac{1}{2}H_{\delta,n}(D^2\phi(x_k,t_k+\delta_k^2),\nabla \phi(x_k,t_k+\delta_k^2)) \leq \rho(\delta_k^2)+2\rho(\delta_kn)n^2,
  \end{align*}
  then, according to Lemma \ref{l:liminf and limsup for H delta n} we have 
    \begin{align*}
    \liminf_k H_{\delta_k,n}(M_k,p_k) \geq (H_n)_*(M,p),
  \end{align*}
  and we conclude that 
  \begin{align*}
    \partial_t\phi(x_0,t_0) + \tfrac{1}{2}(H_{n})_*(D^2\phi(x_0,t_0),\nabla \phi(x_0,t_0)) \leq 0.
  \end{align*}
  To finish the proof of the lemma it remains to consider the case where $\phi$ is such that $w^*-\phi$ has a maximum (or minimum) at some $(x_0,t_0)$ that is not necessarily strict. We explain the procedure for the case of the maximum. In this case, we consider for $\beta>0$ and small the function
  \begin{align*}
    \phi_\beta(x,t) = \phi(x,t) + \beta(|x-x_0|^2+(t-t_0)^2).    
  \end{align*}
  Then the function $w^*-\phi_\beta = w^*-\phi - \beta ( |x-x_0|^2+(t-t_0)^2)$ has a strict maximum at $(x_0,t_0)$, and so by the previous discussion we have
  \begin{align*}
    \partial_t\phi_\beta(x_0,t_0) + \tfrac{1}{2}(H_{n})^*(D^2\phi_\beta(x_0,t_0),\nabla \phi_\beta(x_0,t_0)) \geq 0.
  \end{align*}
  This holds for every $\beta>0$, and taking $\beta \to 0$ we obtain the corresponding inequality for $\phi$. The case where $w_*-\phi$ has a not-necessarily strict minimum is handled similarly, and this completes the proof of the lemma.
  
\end{proof}

The following proposition is a straightforward argument in real analysis, and it is a modification of a basic argument seen often in the viscosity solutions literature. It is used in Section \ref{s:continuous limit} to show the viscosity subsolution/supersolution property for the half-relaxed limits of $w_\delta$.
\begin{proposition}\label{p:stability of touching by test functions}
  Suppose $u_k:\mathbb{R}^m\times \delta_k^2 \mathbb{Z}_- \to \mathbb{R}$ is a sequence of locally uniformly bounded functions continuous in $x$, and denote by $u^*$ their upper semi-continuous limit. Then, if $\phi$ is a smooth function such that $u^*-\phi$ has a strict local maximum at $(x_{\max},t_{\max})$ we can find a sequence $\{(x_k,t_k)\}$ with $(x_k,t_k) \in \mathbb{R}^m\times \delta_k^2 \mathbb{Z}_-$ for every $k$  such that $x_k\to x_{\text{max}},t_k \to t_{\text{max}}$ and $u_k - \phi$ has a local maximum at $(x_{k},t_k)$. 
    
\end{proposition}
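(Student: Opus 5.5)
The plan is the standard perturbation argument for strict maxima, adapted to the lattice $\delta_k^2\mathbb{Z}_-$ in time. Since $u^*-\phi$ has a strict local maximum at $(x_{\max},t_{\max})$, I fix a compact neighborhood
\[
Q_r := \overline{B}_r(x_{\max})\times [t_{\max}-r,\min(t_{\max}+r,0)]
\]
on which $(x_{\max},t_{\max})$ is the unique maximizer of $u^*-\phi$, and take $r$ small. For each $k$ I consider the finite union of compact slices $Q_r\cap(\mathbb{R}^m\times\delta_k^2\mathbb{Z}_-)$, which is nonempty for large $k$ because the grid spacing $\delta_k^2$ goes to zero. Since $u_k(\cdot,t)$ is continuous in $x$ and $\phi$ is smooth, $u_k-\phi$ attains its maximum over this set at some point $(x_k,t_k)$; this uses only finitely many time slices, each a closed ball.

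Next I show $(x_k,t_k)\to(x_{\max},t_{\max})$. Suppose a subsequence converges to some $(\bar x,\bar t)\in Q_r$. By the definition of the half-relaxed limit, $\limsup (u_k-\phi)(x_k,t_k)\le (u^*-\phi)(\bar x,\bar t)$. On the other hand, also by definition of $u^*$, there are $\delta_{k_j}$ and grid points $(y_j,s_j)\to(x_{\max},t_{\max})$ with $u_{k_j}(y_j,s_j)\to u^*(x_{\max},t_{\max})$. Here there is a subtlety: the sequence realizing the limsup in the definition of $u^*$ may use a subsequence of the $\delta_k$. I would handle this by passing to that subsequence from the outset (the conclusion is only needed along some sequence $\delta_k\to0$, which is how it is used in Lemma \ref{l:half relaxed limits are sub or super solutions}), or equivalently by defining $u^*$ along the given sequence. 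Maximality then gives
\[
(u^*-\phi)(x_{\max},t_{\max})\le \liminf_j (u_{k_j}-\phi)(x_{k_j},t_{k_j})\le (u^*-\phi)(\bar x,\bar t).
\]
Strictness forces $(\bar x,\bar t)=(x_{\max},t_{\max})$, and compactness upgrades this to convergence of the whole (sub)sequence. Local uniform boundedness ensures all these quantities are finite.

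Finally, once $(x_k,t_k)$ lies in the interior of $Q_r$ relative to the grid (true for large $k$, since it converges to the center), a maximum over $Q_r\cap(\mathbb{R}^m\times\delta_k^2\mathbb{Z}_-)$ is a local maximum of $u_k-\phi$ on the grid domain. This gives the claim.

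The main obstacle is the bookkeeping around the definition of $u^*$. The approximating sequence may live only on a subsequence of $\delta_k$, and when $t_{\max}$ is near $0$ the time window is one-sided. I would address both by working along the subsequence and intersecting with $t\le 0$.
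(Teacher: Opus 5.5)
Your proposal is correct and follows the paper's proof. Both maximize $u_k-\phi$ over a compact space-time neighborhood intersected with $\mathbb{R}^m\times\delta_k^2\mathbb{Z}_-$ (finitely many compact slices), then use strictness of the maximum of $u^*-\phi$ to force every limit point of the maximizers to be $(x_{\max},t_{\max})$. Your subsequence caveat is the one real departure, and it is well taken: the paper picks $(\hat x_k,\hat t_k)$ with $\lim_k u_k(\hat x_k,\hat t_k)=u^*(x_{\max},t_{\max})$ along the full sequence, which a limsup does not supply, and the full-sequence conclusion can genuinely fail. For instance, take $u_k=-|x|$ for odd $k$, $u_k=-|x-\tfrac12 e_1|$ for even $k$, $\phi=(t-t_{\max})^2$ with $t_{\max}<0$, and $x_{\max}=0$; for even $k$ every local maximum of $u_k-\phi$ sits at $x=\tfrac12 e_1$. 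So passing from the outset to a subsequence realizing $u^*(x_{\max},t_{\max})$ is the right repair, and it is all Lemma~\ref{l:half relaxed limits are sub or super solutions} needs, whereas your alternative of ``defining $u^*$ along the given sequence'' changes nothing, since that is already the proposition's definition.
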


\begin{remark} A respective version of the proposition holds if one instead considers the lower semi-continuous limit and a smooth function $\phi$ such that $u_*-\phi$ has a strict local minimum -- one simply needs to apply the proposition above to the sequence $-u_k$. 
\end{remark}

\begin{proof}
  By assumption there exists $\epsilon_0>0$ so that $(x_{\max},t_{\max})$ is the unique maximum of $u^*-\phi$ in 
  \begin{align*}
    D := \{ (x,t): |x-x_{\max}|^2+|t-t_{\max}|^2\leq \epsilon_0^2\}.    
  \end{align*} 
  Given the continuity of each $u_k$ in $x$, there is a maximum point $(x_k,t_k)$ for $u_k-\phi$ in the compact set\footnote{There are only finitely many possible values of $t$ in this set, so one only need to take the largest maximum value among the finitely many time slices.}
  \begin{align*}
    D_k:= \left \{ (x,t) : |x-x_{\text{max}}|^2+|t-t_{\text{max}}|^2 \leq \epsilon_0^2,\; t \in \delta^2_k \mathbb{Z}_- \right \}. 
  \end{align*} 
  By compactness, the sequence $\{(x_k,t_k)\}$ has a non-empty set of limit points, and again by compactness the sequence will actually converge if there is exactly one limit point. Suppose $(x_*,t_*)$ is such a limit point, we are going to show $(x_*,t_*) =  (x_{\max},t_{\max})$ and thus prove the proposition.   
  
  Consider a sequence $(\hat x_k,\hat t_k)$  such that $(\hat x_k,\hat t_k) \in D_k$, $(\hat x_k,\hat t_k) \to (x_{\max},t_{\max})$ as $k \to \infty$, and
  \begin{align*}
    \lim_ku_k(\hat x_k,\hat t_k) = u^*(x_{\max},t_{\max}).  
  \end{align*}
  Then, since $(x_k,t_k)$ achieves the maximum of $u_k-\phi$ in $D_k$, we have
  \begin{align*}
    u_k(\hat x_k,\hat t_k)-\phi(\hat x_k,\hat t_k) \leq u_k(x_k,t_k)-\phi(x_k,t_k)\;\forall\;k.  
  \end{align*}
  Since $(x_*,t_*)$ is a limit point of $\{(x_k,t_k)\}_k$ there is a sequence $j_k \to \infty$ as $k\to \infty$ such that $(x_{j_k},t_{j_k}) \to (x_{*},t_{*})$. In this case, we may pass to the limit in the last inequality and conclude that 
  \begin{align*}
    u^*(x_{\max},t_{\max})-\phi(x_{\max},t_{\max}) \leq u^*(x_{*},t_{*})-\phi(x_{*},t_{*}).    
  \end{align*}  
  This means $(x_*,t_*)$ achieves the maximum of $u^*-\phi$ in $D$, and so $(x_{*},t_{*}) = (x_{\max},t_{\max})$ as we wanted, finishing the proof. 

\end{proof}
\section{Basics of viscosity solutions}\label{a:basics of viscosity solutions}

In this Appendix, we recall some fundamental definitions and results from the viscosity solution theory for the equation
\begin{align}\label{e:appendix PDE}
  \partial_t w + \tfrac{1}{2}H_n(D^2w,\nabla w) = 0 \text{ in } \mathbb{R}^m\times (-T,0),    
\end{align}
posed for some $T>0$. 

The strength of the viscosity solutions theory resides in the comparison principle. Roughly speaking, if $u,v$ are two viscosity solutions and $u\leq v$ at $t=0$, then $u\leq v$ for all times. This comparison property holds if $u,v$ do not satisfy the PDE but rather satisfy respective partial differential inequalities -- this is what brings us to the notion of subsolution and supersolution. Moreover, viscosity solutions deal with operators satisfying a monotonicity property (recall the ``Global Comparison Property'' discussed towards the end of Section \ref{s:level set PDE}), and it is this monotonicity that permits this notion of solution where one checks the PDE by ``testing'' against smooth functions that ``touch'' the subsolution/supersolution at a point. 

In what follows, we will say $\phi$ is a \emph{test function} if $\phi(x,t)$ is defined in some open set of $\mathbb{R}^m$ and the derivatives $\partial_t\phi$, $\nabla\phi(x,t)$, and $D^2\phi(x,t)$ are well defined and continuous in the domain of definition of $\phi$.

\begin{defn}\label{d:definition of viscosity subsolutions and supersolutions}
  An upper semi-continuous function $u:\mathbb{R}^m\times (-\infty,0]\to\mathbb{R}$ is called a viscosity subsolution of \eqref{e:appendix PDE} if for every test function $\phi(x,t)$ such that $u-\phi$ has a local maximum at some $(x_0,t_0)$, $t_0<0$, we have
  \begin{align*}
    \partial_t \phi(x_0,t_0) + \tfrac{1}{2}(H_n)^*(D^2\phi(x_0,t_0),\nabla\phi(x_0,t_0)) \geq 0.    
  \end{align*}
  A lower semi-continuous function $v:\mathbb{R}^m\times (-\infty,0]\to\mathbb{R}$ is called a viscosity supersolution of \eqref{e:appendix PDE} if for every smooth $\phi(x,t)$ such that $v-\phi$ has a local minimum at some $(x_0,t_0)$, $t_0<0$, we have 
  \begin{align*}
    \partial_t \phi(x_0,t_0) + \tfrac{1}{2}(H_n)_*(D^2\phi(x_0,t_0),\nabla\phi(x_0,t_0)) \leq 0.    
  \end{align*}
  Finally, a function is called a viscosity solution of \eqref{e:appendix PDE} if it is both a viscosity subsolution and viscosity supersolution. 

\end{defn}

We state the comparison principle for viscosity solutions of the equation. The proof follows largely along the lines for what is typically done in the literature, for instance the comparison principle in the first of the papers by Evans and Spruck, see \cite[Theorem 3.2]{EvansSpruck1991} and the more general comparison result in the work of Giga, Goto, Ishii, and Sato \cite{GigaGotoIshiiSato1991}. The theorem in \cite{EvansSpruck1991} provides a comparison principle for the mean curvature flow equation, which corresponds to $n\geq m-1$ here. Strictly speaking, the comparison principle in \cite{EvansSpruck1991} does not apply as stated to \eqref{e:appendix PDE} when $n<m-1$. Here we rely on the more general comparison result in \cite{GigaGotoIshiiSato1991} which deals with a general geometric flow and allows us for terminal data that might grow at infinity (a must have if we are to work with a Banach space norms as terminal data). We state a special case of the comparison principle in \cite{GigaGotoIshiiSato1991} and briefly discuss how the assumptions are satisfied in the case of \eqref{e:appendix PDE}.
 
\begin{theorem}\label{t:comparison principle} Suppose $u,v$ are respectively a viscosity subsolution and a viscosity supersolution of \eqref{e:appendix PDE} for some $T>0$, and suppose the following holds

1. There is a $K>0$ such that for all $x\in\mathbb{R}^m, t\in [-T,0]$ we have
\begin{align*}
  u(x,t) \leq K(|x|+1), v(x,t) \geq -K(|x|+1).    
\end{align*}

2. The functions $u(x,t),v(x,t)$ are continuous as $t\to 0$, that is, we have $u^*(x,0)=u_*(x,0)$ and $v^*(x,0)=v_*(x,0)$ for every $x \in \mathbb{R}^m$.

3. For some modulus of continuity $\rho$ we have
\begin{align*}
  u(x,0)-v(y,0) \leq \rho(|x-y|).    
\end{align*}
Then, we have
\begin{align*}
  u(x,t) \leq v(x,t) \;\forall\;x\in\mathbb{R}^m,\;t\in[-T,0].    
\end{align*}
    
\end{theorem}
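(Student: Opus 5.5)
The plan is to obtain Theorem \ref{t:comparison principle} from the general comparison theorem of Giga, Goto, Ishii, and Sato \cite{GigaGotoIshiiSato1991}, rather than redoing the argument from scratch. First I will reverse time, setting $\tilde u(x,s)=u(x,-s)$ and $\tilde v(x,s)=v(x,-s)$. This turns \eqref{e:appendix PDE} into the forward equation
\begin{align*}
  \partial_s \tilde w + F(D^2\tilde w,\nabla \tilde w)=0,\qquad F(M,p):=-\tfrac12 H_n(M,p),
\end{align*}
on $\mathbb{R}^m\times(0,T)$ with initial data at $s=0$. Under this change of variables, Definition \ref{d:definition of viscosity subsolutions and supersolutions} with the envelopes $(H_n)^*$ and $(H_n)_*$ matches the definition in \cite{GigaGotoIshiiSato1991} with $F_*=-\tfrac12 (H_n)^*$ and $F^*=-\tfrac12(H_n)_*$. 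It then remains to verify the structural hypotheses of their theorem for this $F$.

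\textbf{Step 1: structural properties of $F$.}
\begin{enumerate}
\item \emph{Continuity.} $F$ is continuous on $\mathrm{Sym}(m)\times(\mathbb{R}^m\setminus\{0\})$. This follows from Lemma \ref{l:H n is sum of positive eigenvalues}, since for $p\neq0$ the value $H_n(M,p)$ is a continuous function of the eigenvalues of $\Pi(p)M\Pi(p)$.
\item \emph{Degenerate ellipticity.} If $M\le M'$, then $(MA\overline\varepsilon,A\overline\varepsilon)\le(M'A\overline\varepsilon,A\overline\varepsilon)$ for every $A$ and $\overline\varepsilon$. Taking the min over $\overline\varepsilon$ and then the max over $A\in\mathcal A(p)$ preserves this inequality, so $H_n(M,p)\le H_n(M',p)$.
\item \emph{Geometric invariance.} For $p\neq 0$, the identity $H_n(M,p)=H_n(\Pi(p)M\Pi(p),0)$ from the proof of Lemma \ref{l:H n is sum of positive eigenvalues} gives
\begin{align*}
  H_n(\lambda M+\sigma p\otimes p,\mu p)=\lambda H_n(M,p)\quad\text{for } \lambda>0,\ \mu\neq0,\ \sigma\in\mathbb{R}.
\end{align*}
\item \emph{Bounded envelopes at the gradient singularity.} We have $|H_n(M,p)|\le \min(n,m-1)\|M\|$, and this bound persists for $(H_n)^*$ and $(H_n)_*$ via Remark \ref{r:half relaxed limits H n delta}. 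In particular $F^*(0,0)=F_*(0,0)=0$, which is the standing assumption in \cite{GigaGotoIshiiSato1991} at $p=0$.
\end{enumerate}

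\textbf{Step 2: matching the remaining hypotheses.} Hypothesis 1 (linear growth) is exactly the class of unbounded sub/supersolutions treated in \cite{GigaGotoIshiiSato1991}. Hypotheses 2 and 3 supply the continuity at the initial time and the uniform continuity of the initial data that their theorem requires. The conclusion $\tilde u\le\tilde v$, undone by the time reversal, gives the claim.

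\textbf{Main obstacle.} The only delicate point is the singularity at $\nabla w=0$, together with the unbounded domain, in case one wants the doubling-of-variables argument written out rather than cited. In that case I would maximize
\begin{align*}
  u(x,t)-v(y,t)-\frac{|x-y|^4}{4\epsilon}-\beta\left(\sqrt{1+|x|^2}+\sqrt{1+|y|^2}\right)-\frac{\gamma}{t+T}.
\end{align*}
The linear-growth bound guarantees that a maximum exists. The quartic penalty ensures that at a maximum point with $x=y$ both the gradient and the Hessian of the penalty vanish, so only $(H_n)^*(O,0)=(H_n)_*(O,0)=0$ enters, and Step 1(iv) handles that case. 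When $x\neq y$, $F$ is continuous, and the Crandall--Ishii lemma together with Step 1(ii) gives the usual contradiction as $\epsilon\to0$ and then $\beta,\gamma\to0$. The $\beta$-terms contribute errors of order $\beta$, because their gradients and Hessians are bounded.
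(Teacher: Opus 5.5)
Your main argument is the paper's proof. The paper also disposes of Theorem \ref{t:comparison principle} by invoking \cite[Theorem 3.2]{GigaGotoIshiiSato1991} and checking three things: continuity and degenerate ellipticity for $p\neq0$, $F_*(0,0)=F^*(0,0)$, and local boundedness, using Lemma \ref{l:liminf and limsup for H delta n} and Remark \ref{r:half relaxed limits H n delta}. Your explicit time reversal and the identifications $F_*=-\tfrac12(H_n)^*$, $F^*=-\tfrac12(H_n)_*$ are correct. There is one harmless slip. At $p=0$ the maximization runs over all of $\mathcal A$, so (iv) should read $0\le H_n(M,p)\le\min(n,m)\|M\|$; for instance $H_n(I,0)=m$ when $n\ge m$.

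The self-contained doubling sketch you offer as a fallback would not go through as written, for two reasons.

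\emph{Existence of a maximum.} The hypotheses only give $u(x,t)-v(x,t)\le 2K(|x|+1)$ on the diagonal. Once $\beta<K$, the localization $2\beta\sqrt{1+|x|^2}$ does not dominate this, so nothing guarantees your penalized function attains, or even has, a finite supremum. This can be repaired with your invariance (iii) plus positive homogeneity at $p=0$. The functions $\arctan u$ and $\arctan v$ are again a sub- and a supersolution and are bounded. Hypothesis 3 survives because $\arctan$ is $1$-Lipschitz.

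\emph{Mismatched gradients near the singularity.} Even for bounded $u,v$, the $\beta$-terms make the two gradients at the maximum point $(\bar x,\bar y,\bar t)$ differ:
$p-q=\beta\big(\bar x/\sqrt{1+|\bar x|^2}+\bar y/\sqrt{1+|\bar y|^2}\big)$.
Since $H_n(Y,\cdot)$ depends only on the direction of its second argument, $|H_n(Y,p)-H_n(Y,q)|$ is controlled only by $\|Y\|\,\big|\tfrac{p}{|p|}-\tfrac{q}{|q|}\big|$. That quantity is of size $\beta/|\bar x-\bar y|$, not $\beta$. When $|\bar x-\bar y|^3/\epsilon\lesssim\beta$, the vectors $p$ and $q$ can even vanish. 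So ``$F$ is continuous when $x\neq y$'' together with ellipticity does not close the argument. Similarly, at $\bar x=\bar y$ the gradients are $\pm\beta\bar x/\sqrt{1+|\bar x|^2}$, not $0$. What saves that case is the bound $0\le H_n\le\min(n,m)\|M\|$, not the value at $(O,0)$.

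One way out is to couple $\beta$ to $\epsilon$, say $\beta=\epsilon$, and split according to whether $|\bar x-\bar y|\le\epsilon^{a}$ with $a\in(1/2,2/3)$. In the first regime the quartic penalty forces $\|X\|\to0$, and the bound $0\le H_n\le\min(n,m)\|M\|$ yields the contradiction. In the second regime $|p|,|q|\gtrsim|\bar x-\bar y|^3/\epsilon\gg\beta$, so the direction error above is $O(\epsilon^{1-a})$. None of this affects your main route, which, like the paper, relies on \cite{GigaGotoIshiiSato1991} for these details.
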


\begin{proof}
  This is a special case of \cite[Theorem 3.2]{GigaGotoIshiiSato1991}, a result that deals with general unbounded domain $U$ (here we only need it for $U=\mathbb{R}^m$) and a more general PDE $F(D^2u,\nabla u)$ (in our case, $F = \tfrac{1}{2}H_n$). The only requirements made on $F$ in \cite[Theorem 3.2]{GigaGotoIshiiSato1991} are

  \begin{enumerate}
      \item $F(M,p)$ is continuous in the neighborhood of $(M,p)$ when $p\neq 0$ and is degenerate elliptic in the sense that $F(M,p) \leq F(M+N,p)$ whenever $N$ is positive semi-definite. 
      \item We must have $F_*(0,0)=F^*(0,0)$. 
     \item For every $R>0$, there must be some finite constant $C_R>0$ such that 
     \begin{align*}
       |F(M,p)| \leq C_R \text{ whenever } |p|\leq R, \|M\|_{\text{Fr}}\leq R.    
     \end{align*}
  \end{enumerate}
  Each of these properties are established $F=\tfrac{1}{2}H_n$ by the results at the end of Section \ref{s:Banaszczyk's theorem and the limiting Bellman operator}, specifically Lemma \ref{l:liminf and limsup for H delta n} and Remark \ref{r:half relaxed limits H n delta}.
\end{proof}

As it is standard, the comparison principle immediately implies the uniqueness of viscosity solutions. It is also possible to use it to prove existence of such a solution via the Perron method (see, for instance \cite[Section 4]{crandall1992user}). However, as solutions are built here from the continuum limit of the discrete game we will not discuss this method. 

The comparison principle in Theorem \ref{t:comparison principle} is a key component in the proof of Theorem \ref{t:continuum limit smooth E case}. These two theorems guarantee the existence and uniqueness of a viscosity solution to the terminal value problem \eqref{e:curvature flow initial data level set formulation}. We also prove this unique solution is self-similar and has convex sublevel sets in space for each fixed time. The first of these results is made possible by the uniqueness of the solution to \eqref{e:curvature flow initial data level set formulation} (which follows from Theorem \ref{t:comparison principle}) and the fact that the terminal data $x\mapsto \|x\|_E$ is a homogeneous function.

\begin{lemma}\label{l:solution is self similar}
  If $w(x,t)$ is the unique viscosity solution of \eqref{e:curvature flow initial data level set formulation}, then for every $x \in \mathbb{R}^m$ and $t<0$ we have
  \begin{align*}
    w(x,t) = \sqrt{-t}w(x/\sqrt{-t},-1).    
  \end{align*}
    
\end{lemma}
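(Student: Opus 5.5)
The plan is to use parabolic scaling invariance of the equation in \eqref{e:curvature flow initial data level set formulation} together with uniqueness from Theorem \ref{t:comparison principle}. Fix $\lambda>0$ and define
\begin{align*}
  w_\lambda(x,t) := \lambda^{-1} w(\lambda x,\lambda^2 t),\quad x\in\mathbb{R}^m,\ t\le 0.
\end{align*}
We show that $w_\lambda$ is again a viscosity solution of \eqref{e:curvature flow initial data level set formulation}, so $w_\lambda\equiv w$. Choosing $\lambda = 1/\sqrt{-t}$ and evaluating at time $t$ then gives the claimed identity:
\begin{align*}
  w(x,t)=w_\lambda(x,t)=\sqrt{-t}\,w(x/\sqrt{-t},-1).
\end{align*}

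\textbf{Terminal data.} We have $w_\lambda(x,0)=\lambda^{-1}\|\lambda x\|_E=\|x\|_E$ by $1$-homogeneity of the norm.

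\textbf{Equation.} Let $\phi$ be a test function such that $w_\lambda-\phi$ has a local maximum at $(x_0,t_0)$ with $t_0<0$. Put $\tilde\phi(y,s):=\lambda\phi(y/\lambda,s/\lambda^2)$. Then $w-\tilde\phi$ has a local maximum at $(\lambda x_0,\lambda^2 t_0)$, since $w-\tilde\phi = \lambda\,(w_\lambda-\phi)(y/\lambda,s/\lambda^2)$. The derivatives scale as
\begin{align*}
  \partial_s\tilde\phi=\lambda^{-1}\partial_t\phi,\qquad \nabla\tilde\phi=\nabla\phi,\qquad D^2\tilde\phi=\lambda^{-1}D^2\phi .
\end{align*}
The key structural fact is that $(H_n)^*$ and $(H_n)_*$ satisfy
\begin{align*}
  (H_n)^*(cM,p)=c\,(H_n)^*(M,p),\qquad (H_n)^*(M,cp)=(H_n)^*(M,p)
\end{align*}
for every $c>0$, and likewise for $(H_n)_*$. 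For $H_n$ itself these follow directly from \eqref{e:definition of the operator Hn}, since $\mathcal A(cp)=\mathcal A(p)$ and the objective is linear in $M$. They pass to the semicontinuous envelopes because the maps $(M',p')\mapsto(cM',cp')$ are homeomorphisms preserving neighborhoods of $(M,0)$. Alternatively, they can be read off the explicit formulas in Lemma \ref{l:liminf and limsup for H delta n} and Lemma \ref{l:H n is sum of positive eigenvalues}.

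The subsolution inequality for $w$ at $(\lambda x_0,\lambda^2t_0)$ then becomes
\begin{align*}
  \lambda^{-1}\bigl[\partial_t\phi+\tfrac12 (H_n)^*(D^2\phi,\nabla\phi)\bigr](x_0,t_0)\ge 0,
\end{align*}
which is exactly the subsolution property of $w_\lambda$ at $(x_0,t_0)$. The supersolution case is identical with $(H_n)_*$.

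\textbf{Comparison hypotheses.} To invoke Theorem \ref{t:comparison principle} with $u=w_\lambda$, $v=w$ (and then with the roles reversed), we need three things.
\begin{enumerate}
  \item Linear growth. From Proposition \ref{p:pointwise bound for w_delta} passed to the limit, $\|x\|_E\le w(x,t)\le\sqrt{\|x\|_E^2+C|t|}$, which is preserved by the rescaling on each strip $[-T,0]$.
  \item Continuity at $t=0$. This is Proposition \ref{p:half relaxed limits take inital data}.
  \item A modulus for the terminal data. Both terminal data equal $\|\cdot\|_E$, which is Lipschitz.
\end{enumerate}
Comparison yields $w_\lambda\le w$ and $w\le w_\lambda$ on every strip, hence equality.

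The only delicate point is the homogeneity of the envelopes at $p=0$, where $H_n$ is discontinuous. There I would rely on the characterization $(H_n)^*(M,0)=\max_A\min_{\bar\varepsilon}(MA\bar\varepsilon,A\bar\varepsilon)$ and $(H_n)_*(M,0)=\min_{\sigma}H_n(M,\sigma)$. Both expressions are visibly positively $1$-homogeneous in $M$.
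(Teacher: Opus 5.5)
Your proposal is correct and follows the same route as the paper: you rescale to $\lambda^{-1}w(\lambda x,\lambda^2 t)$, check that it solves the same terminal value problem, and conclude equality with $w$ via Theorem \ref{t:comparison principle}, using the bounds $\|x\|_E\le w(x,t)\le\sqrt{\|x\|_E^2-Ct}$ inherited from the continuum limit. Your explicit test-function computation, together with the positive $1$-homogeneity of $(H_n)^*$ and $(H_n)_*$ in $M$, fills in the step the paper leaves as a ``straightforward computation.''
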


\begin{proof}
    From Theorem \ref{t:continuum limit smooth E case} we know that the function $w$, unique viscosity solution for \eqref{e:curvature flow initial data level set formulation}, is the limit of $w_\delta$ as $\delta\to 0$. Therefore this function satisfies the pointwise bounds
    \begin{align*}
      \|x\|_E \leq w(x,t) \leq \sqrt{\|x\|_E^2-Ct}.    
    \end{align*}    
    Now, for $\lambda>0$ define 
    \begin{align*}
      w^{(\lambda)}(x,t) = \lambda^{-1}w(\lambda x,\lambda^2 t).    
    \end{align*}
    It follows from a straightforward computation and $w$ being a viscosity solution of the PDE in \eqref{e:curvature flow initial data level set formulation} that $w^{(\lambda)}$ is a (viscosity) solution of the same PDE. On the other hand, from the homogeneity of the terminal data we have
    \begin{align*}
      w^{(\lambda)}(x,0) = \lambda^{-1} \|\lambda x\|_E = \|x\|_E = w(x,0).    
    \end{align*}
    That is, $w$ and $w^{(\lambda)}$ are two viscosity solutions of the same terminal value problem. Moreover, these two functions satisfy the following two sided bounds for every $x,t$,
    \begin{align*}
      \|x\|_E \leq w(x,t),\;w^{(\lambda)}(x,t)  \leq \sqrt{\|x\|^2_E-Ct}.
    \end{align*}
    In particular, 
    \begin{align*}
       |w(x,t)-w^{(\lambda)}(x,t)|\leq \sqrt{C|t|}.   
    \end{align*}
    This means that $w,w^{(\lambda)}$ satisfy the growth assumptions in Theorem \ref{t:comparison principle}. We conclude that 
    \begin{align*}
      w(x,t) = w^{(\lambda)}(x,t),\;\forall\;(x,t) \in \mathbb{R}^m\times \mathbb{R}_-,    
    \end{align*}
    and this holds for any $\lambda>0$. In particular, given $x,t$, choosing $\lambda = (\sqrt{-t})^{-1}$ we have
    \begin{align*}
      w(x,t) = \lambda^{-1}w(\lambda x,\lambda^2t) = \sqrt{-t}w(x/\sqrt{-t},-1),
    \end{align*}
    and the lemma is proved.
\end{proof}

To finish this appendix, we discuss the convexity of the level sets of $u$. For this, it will be convenient (as discussed in Section \ref{s:positive curvatures versus curvatures}) to consider the terminal value problem
\begin{align}\label{e:appendix terminal value standard curvature flow}
  \left \{ \begin{array}{rl} \partial_t \overline w + \tfrac{1}{2}\overline H_n(D^2\overline w,\nabla \overline w) & = 0 \text{ in } \mathbb{R}^m\times \mathbb{R}_-,\\
  \overline w(x,0) & = \|x\|_E \text{ in } \mathbb{R}^m.\end{array} \right.
\end{align}
The PDE of direct interest to us is \eqref{e:appendix PDE}, that of flow by the sum of principal positive curvatures. However, the PDE in \eqref{e:appendix terminal value standard curvature flow} which is flow by the sum of principal curvatures (positive or not) is more standard and vastly studied in the literature. For solutions whose level sets are convex the two equations agree, so one can drop the distinction of ``positive'' in the principal curvatures. For this reason it is an important fact that solutions to \eqref{e:appendix terminal value standard curvature flow} always have convex sublevel sets in space for each time. We record this as a theorem, which we invoke in the proof of the main theorem at the end of Section \ref{s:continuous limit}.

\begin{theorem}\label{t:convex level sets}

Consider $\overline w:\mathbb{R}^m\times (-T,0]\to\mathbb{R}$ a viscosity solution of \eqref{e:appendix terminal value standard curvature flow}. Then for every $t \in (-T,0]$ the sublevel sets of $\overline w(\cdot,t)$ are convex, in particular, $\overline w$ is also a viscosity solution of \eqref{e:appendix PDE}. 
    
\end{theorem}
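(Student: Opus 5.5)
The plan is to obtain convexity of the sublevel sets from a known convexity-preserving theorem for geometric level set equations, and then to deduce the second claim (that $\overline w$ also solves \eqref{e:appendix PDE}) from a pointwise comparison of $\overline H_n$ and $H_n$ at test functions touching a function with convex sublevel sets.

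\textbf{Convexity.} For convexity I would invoke the convexity-preserving result of Giga, Goto, Ishii and Sato \cite{GigaGotoIshiiSato1991}; for $n\geq m-1$ one may alternatively use Evans--Spruck \cite{EvansSpruck1991}.

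The operator $\overline H_n(M,p)=\sum_{j=0}^{k-1}\sigma_{m-j}(\Pi(p)M\Pi(p))$ has the following properties:
\begin{itemize}
\item it is degenerate elliptic;
\item it is geometric;
\item it is continuous for $p\neq 0$;
\item as a sum of the largest $k$ eigenvalues, it is convex in $M$ on the tangent space.
\end{itemize}
The last property is the structural hypothesis used there to propagate quasiconvexity of the data. The terminal datum $\|x\|_E$ is convex, so all of its sublevel sets are convex.

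Uniqueness (Theorem \ref{t:comparison principle} adapted to $\overline H_n$, whose hypotheses are verified exactly as for $H_n$) identifies $\overline w$ with the solution constructed in \cite{GigaGotoIshiiSato1991}. That solution has convex sublevel sets for every $t$. A backup route, if the citation is not directly applicable, is a game-theoretic argument in the spirit of \cite{QingSchikorraZhou2016}: show that the discrete Kohn--Serfaty-type scheme preserves quasiconvexity, then pass to the limit.

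\textbf{Second claim, subsolution side.} Let $\phi$ touch $\overline w$ from above at $(x_0,t_0)$ with $p=\nabla\phi\neq 0$. The set $\{\overline w(\cdot,t_0)\le \overline w(x_0,t_0)\}$ is convex and contains $\{\phi(\cdot,t_0)\le\phi(x_0,t_0)\}$ near $x_0$. A smooth convex set contained... more precisely, the sublevel set of $\phi$ is locally inside a convex set with $x_0$ on both boundaries, and this forces the relevant curvature comparison at $x_0$.

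However, the two inequalities go in different directions on the two sides, so I would first establish which side is direct:
\begin{itemize}
\item \emph{Test from below} (supersolution side): $\{\phi\le c\}\supset$ the convex sublevel set of $\overline w$. This does not control the curvatures of $\phi$ pointwise.
\item \emph{Test from above}: the sublevel set of $\phi$ sits inside a convex set touching at $x_0$. This also does not give $\Pi D^2\phi\Pi\ge 0$.
\end{itemize}

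\textbf{Second claim, via quasiconvexity.} Instead, for the second claim I would use that a quasiconvex viscosity solution admits test functions that can be replaced by quasiconvex ones without loss. Concretely, since $\overline H_n\le H_n$ always holds, a supersolution of the $H_n$ equation is automatically a supersolution of the $\overline H_n$ equation, but we need the reverse. So:
\begin{itemize}
\item \emph{Subsolution property for $H_n$.} This follows from $\overline H_n\le H_n$ directly.
\item \emph{Supersolution property for $H_n$.} When $\overline w-\phi$ has a local minimum at $x_0$ with $p\neq0$, I would replace $\phi$ by $\max(\phi,\ell)$-type modifications, or sup-convolve in the tangent directions along the supporting hyperplane of the convex sublevel set. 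The aim is to show one may assume $\Pi D^2\phi\Pi\ge0$ there, so that $\overline H_n=H_n$.
\end{itemize}
This replacement is the main obstacle, and the case $p=0$ (handled via the envelopes $(H_n)_*$) needs separate care using the self-similarity from Lemma \ref{l:solution is self similar}.
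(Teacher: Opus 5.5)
Your convexity step matches the paper, which also only cites the literature (Evans--Spruck for $n\ge m-1$, Andrews plus approximation for $n\le m-2$). If you cite \cite{GigaGotoIshiiSato1991}, say precisely what is used. Results of this kind typically propagate convexity of the function $x\mapsto\overline w(x,t)$, which is fine here since $\|x\|_E$ is convex. Their hypothesis is of Alvarez--Lasry--Lions type: convexity of $X\mapsto\overline H_n(X^{-1},p)$ on positive definite $X$. This follows from convexity and monotonicity of $\overline H_n$ in $M$, because $X\mapsto X^{-1}$ is operator convex.

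\textbf{The gap: the supersolution half of the second claim.} The paper only asserts the second claim, and your proposal leaves its nontrivial half open. The $H_n$-subsolution property of $\overline w$ does follow from $\overline H_n\le H_n$. This holds also at $p=0$, since $(\overline H_n)^*(M,0)=\max_\sigma\overline H_n(M,\sigma)\le H_n(M,0)$. The $H_n$-supersolution property, however, is not obtained, and your remedies do not give it.
\begin{itemize}
\item If $\phi$ touches $\overline w$ from below, $\Pi D^2\phi\Pi$ may have negative eigenvalues.
\item Passing to $\max(\phi,\ell)$ with an affine minorant $\ell$ does not help. Take the mean curvature case $m=3$, $k=2$, $p=e_3$, with tangential block $\mathrm{diag}(1,-1)$. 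Any $C^2$ function $\psi\le\max(\phi,\ell)$ touching at $x_0$ satisfies $(D^2\psi(x_0)h,h)\le\max\{(D^2\phi\, h,h),0\}=0$ for $h=(1,\pm1,0)$. Hence $\tr(\Pi D^2\psi\Pi)\le 0=\tr(\Pi D^2\phi\Pi)$, whereas you need $\tr((\Pi D^2\phi\Pi)_+)=1$.
\item Sup-convolution preserves subsolutions, not supersolutions. The semiconvexity it produces controls test functions from above, not from below.
\end{itemize}

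\textbf{The side you dismissed does work.} Let $u$ have convex sublevel sets, and let $u-\phi$ have a local maximum at $(x_0,t_0)$ with $p=\nabla\phi(x_0,t_0)\neq0$ and $c=u(x_0,t_0)$. Then $\{\phi(\cdot,t_0)<c\}$ lies locally in the open convex set $U=\{u(\cdot,t_0)<c\}$, and $x_0\in\overline U\setminus U$. Separating $x_0$ from $U$ forces $\{\phi(\cdot,t_0)<c\}\subseteq\{(x-x_0)\cdot p<0\}$ near $x_0$. Writing the boundary of this set as a graph over $p^\perp$ then gives $\Pi D^2\phi\Pi\ge0$, hence $H_n(D^2\phi,p)=\overline H_n(D^2\phi,p)$. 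So quasiconvexity transfers subsolution inequalities between the two equations, not supersolution ones.

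\textbf{A repair: swap the roles.} Let $w$ solve the $H_n$ problem.
\begin{itemize}
\item Since $\overline w$ is an $H_n$-subsolution, comparison gives $\overline w\le w$.
\item $H_n(\cdot,p)$ is also convex and nondecreasing in $M$. Indeed, the sum of the positive parts of the $k$ largest eigenvalues of $Y=\Pi M\Pi$ equals $\max\{\tr(PY):0\le P\le I,\ \tr P\le k\}$. So the same convexity-preservation result gives $w$ convex sublevel sets.
\item By the argument above, $w$ is an $\overline H_n$-subsolution at test points with $\nabla\phi\neq0$.
\item At critical points, use the standard reduction for geometric equations to test functions with $D^2\phi=0$ there; both operators vanish at $(0,0)$.
\item Comparison for $\overline H_n$ then yields $w\le\overline w$.
\end{itemize}
Hence $w=\overline w$, which is exactly what the proof of Theorem \ref{t:main theorem} uses.

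Alternatively, for smooth $E$ and $n\ge m-1$, you can use regularity. Huisken's theorem plus scaling makes $\overline w$ smooth with $\nabla\overline w\neq0$ off $\{x=0\}$. There, $D^2\phi\le D^2\overline w$, $\Pi D^2\overline w\Pi\ge0$ and monotonicity of $H_n$ give the supersolution inequality classically. The points $x=0$ are handled by the same reduction. The self-similarity of Lemma \ref{l:solution is self similar} plays no role in the case $p=0$.
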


We state this theorem without proof, and refer the reader to the relevant literature. First, we note that for $n\geq m-1$ (so, the case of the standard mean curvature flow) this result is contained in Evans Spruck \cite[Section 7.4]{EvansSpruck1991}, where they obtain the earlier landmark result of Huisken \cite{Huisken1984} but in the non-smooth viscosity solution setting (they do this by reducing the problem to one for an elliptic equation, from where the result for the level set function follows). For the other values of $n$ (so, $n\leq m-2$) the preservation of convexity result follows first by considering the work of Andrews \cite{andrews1994contraction} who considers more general functions of the principal curvatures in the smooth setting and shows the propagation of convexity, and then applying an approximation argument to pass his result to the flow by the sum of the largest $k$ principal curvatures. Last but not least, it is worth mentioning a result of Qing, Schikorra, and Zhou \cite{QingSchikorraZhou2016} for the case $m=2$ and $n=1,2$ notable in that they obtain a new proof of the preservation of convexity of sublevel sets for the mean curvature flow using not PDE but game theoretic tools, making use of the work of Kohn and Serfaty \cite{KohnSerfaty2006} (they also obtain analogous results for other flows using a game theoretic perspective).

\end{document}